\numberwithin{equation}{section}
\numberwithin{figure}{section}
\theoremstyle{remark}
\newtheorem*{acknowledgement*}{\protect\acknowledgementname}
\theoremstyle{remark}
\newtheorem*{notation*}{\protect\notationname}
\theoremstyle{plain}
\newtheorem{thm}{\protect\theoremname}
\theoremstyle{remark}
\newtheorem{notation}[thm]{\protect\notationname}
\theoremstyle{plain}
\newtheorem{prop}[thm]{\protect\propositionname}
\theoremstyle{definition}
\newtheorem{defn}[thm]{\protect\definitionname}
\theoremstyle{plain}
\newtheorem{lem}[thm]{\protect\lemmaname}
\theoremstyle{definition}
\newtheorem{example}[thm]{\protect\examplename}
\theoremstyle{remark}
\newtheorem{rem}[thm]{\protect\remarkname}
\theoremstyle{plain}
\newtheorem{cor}[thm]{\protect\corollaryname}
\theoremstyle{plain}
\newtheorem{question}[thm]{\protect\questionname}
\providecommand{\acknowledgementname}{Acknowledgement}
\providecommand{\corollaryname}{Corollary}
\providecommand{\definitionname}{Definition}
\providecommand{\examplename}{Example}
\providecommand{\lemmaname}{Lemma}
\providecommand{\notationname}{Notation}
\providecommand{\propositionname}{Proposition}
\providecommand{\questionname}{Question}
\providecommand{\remarkname}{Remark}
\providecommand{\theoremname}{Theorem}
\begin{document}
\newtheoremstyle{named}{}{}{\itshape}{}{\bfseries}{.}{.5em}{\thmnote{#3}} \theoremstyle{named}
\theoremstyle{named} 
\newtheorem*{namedtheorem}{Theorem}

\author{Adrien Dubouloz}
\address{Institut de Math\'ematiques de Bourgogne, UMR 5584 CNRS, Universit\'e de Bourgogne, F-21000, Dijon \newline\indent Laboratoire de Math\'ematique et Applications, UMR 7348 CNRS, Universit\'e de Poitiers, F-86000, Poitiers} 
\email{adrien.dubouloz@maths.cnrs.fr }
\author{Takashi Kishimoto}
\address{Department of Mathematics, Faculty of Science, Saitama University, Saitama 338-8570, Japan} \email{tkishimo@rimath.saitama-u.ac.jp}
\author{Masaru Nagaoka}
\address{Gakushuin University, 1-5-1 Mejiro, Toshima-ku, Tokyo 171-8588, Japan} \email{masaru.nagaoka@gakushuin.ac.jp} 

\subjclass[2020]{14R10; 14E30; 14J30}
\keywords{affine space, completions, del Pezzo fibrations, Minimal Model Program.}
\title{Completions of the affine $3$-space into del Pezzo fibrations}
\begin{abstract}
We give constructions of completions of the affine $3$-space into
total spaces of del Pezzo fibrations of every degree other than $7$
over the projective line. We show in particular that every del Pezzo
surface other than $\mathbb{P}^{2}$ blown-up in one or two points
can appear as a closed fiber of a del Pezzo fibration $\pi:X\to\mathbb{P}^{1}$
whose total space $X$ is a $\mathbb{Q}$-factorial threefold with
terminal singularities which contains $\mathbb{A}^{3}$ as the complement
of the union of a closed fiber of $\pi$ and a prime divisor $B_{h}$
horizontal for $\pi$. For such completions, we also give a complete
description of integral curves that can appear as general fibers of
the induced morphism $\bar{\pi}:B_{h}\to\mathbb{P}^{1}$. 
\end{abstract}

\maketitle

\section*{Introduction}

Fano threefolds of small Picard rank that can appear as completions
of the affine space $\mathbb{A}^{3}$ over an algebraically closed
field of characteristic zero have received a lot of attention during
the past decades leading to a complete classification for smooth Fano
threefolds of Picard rank one \cite{Fu93} and series of partial results
for mildly singular Fano threefolds of higher Picard rank, see e.g.
\cite{HuMo20, Kis05, Nag18,Pro16}. From the viewpoint of the Minimal
Model Program (MMP), it is natural to consider more generally the
question of which threefold Mori fiber spaces $\pi:X\to B$, where
$X$ is a $\mathbb{Q}$-factorial projective threefold with terminal
singularities and where $\pi$ is an extremal contraction of relative
Picard rank one to a base variety $B$ which is either a curve or
surface, have the property to contain $\mathbb{A}^{3}$ as a Zariski
open subset of their total spaces. In the situation where the base
$B$ is a curve, hence is isomorphic to the projective line $\mathbb{P}^{1}$,
the general fibers of $\pi$ are del Pezzo surfaces of a certain degree
$d\in\{1,\ldots,9\}$ and a first basic question is to understand
which del Pezzo surfaces can appear as general fibers of del Pezzo
fibrations $\pi:X\to\mathbb{P}^{1}$ whose total spaces are completions
of $\mathbb{A}^{3}$. Due to the condition that $\pi:X\to\mathbb{P}^{1}$
has relative Picard rank one, a general fiber cannot be isomorphic
to $\mathbb{P}^{2}$ blown-up in one or two points, implying that
$d\neq7$ and that in the case $d=8$, a general fiber of $\pi$ is
isomorphic to a smooth quadric surface in $\mathbb{P}^{3}$. Besides
the obvious locally trivial $\mathbb{P}^{2}$-bundles over $\mathbb{P}^{1}$
in degree $d=9$ and families of quadric fibrations \cite{Nag20}
in degree $d=8$, particular examples of completions in every other
possible degrees can be found disseminated in the literature, in \cite{Pro16}
for $d=6$, in \cite{DKP22} for $d=5$ and in \cite{DK17,DK18} for
$d=1,2,3,4$, but in the latter four cases, without exact indication
on the possible isomorphism types of general fibers\footnote{Recall that in contrast, del Pezzo surfaces of degrees $5$ and $6$
are unique up to isomorphism.}. Our first main result completes the picture as follows: 

\begin{namedtheorem}[Theorem A] Let $S$ be a del Pezzo surface other
than $\mathbb{P}^{2}$ blown-up in one or two points. Then there exists
a completion of $\mathbb{A}^{3}$ into the total space of a del Pezzo
fibration $\pi:X\to\mathbb{P}^{1}$ such that $S$ is isomorphic to
a closed fiber of $\pi$ and the boundary divisor $B=X\setminus\mathbb{A}^{3}$
is the union of a fiber $B_{f}$ of $\pi$ and of a prime divisor
$B_{h}$ which dominates $\mathbb{P}^{1}$. 

\end{namedtheorem}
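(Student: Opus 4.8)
The plan is to construct the desired completion by an explicit birational procedure, starting from a known locally trivial $\mathbb{P}^2$-bundle completion of $\mathbb{A}^3$ and running a carefully chosen sequence of blow-ups and Mori-type contractions to decrease the degree of the general fiber while keeping control of the boundary. First I would fix a completion of $\mathbb{A}^3$ into a $\mathbb{P}^2$-bundle $\rho\colon Y\to\mathbb{P}^1$ whose boundary is the union of a fiber $Y_f$ and a section-type horizontal divisor; the classical model here is $Y=\mathbb{P}(\mathcal{O}\oplus\mathcal{O}\oplus\mathcal{O}(1))$, or a Hirzebruch-type twist thereof, with $\mathbb{A}^3$ sitting as the complement of a hyperplane subbundle union a fiber. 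This realizes Theorem~A in degree $d=9$ and, after a single elementary transformation along a horizontal line, in degree $d=8$ as a smooth quadric fibration; this is essentially the content of the quoted families from \cite{Nag20}.

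The core of the argument is the inductive descent $d=9\rightsquigarrow 8\rightsquigarrow 6\rightsquigarrow 5\rightsquigarrow\cdots\rightsquigarrow 1$, skipping $d=7$ because of the relative Picard rank one constraint recalled in the introduction. At each step I would locate inside the total space $X_d$ a curve $C\subset B_f$ lying in the closed fiber we want to realize as $S$ (equivalently, a $(-1)$-curve or a suitable configuration on the surface $S$), blow it up, and then contract the resulting extremal ray in a relative MMP over $\mathbb{P}^1$; tracking how the boundary fiber $B_f$ and the horizontal divisor $B_h$ transform, and checking that the complement of the new boundary is still isomorphic to $\mathbb{A}^3$ (because the blown-up center lies in the boundary and the contracted locus again lies in the boundary). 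The degree drop in the general fiber is then read off from the standard behavior of del Pezzo surfaces under blow-up at a point. To hit an \emph{arbitrary} del Pezzo surface $S$ of the prescribed degree as a \emph{closed} fiber (not merely the general fiber), I would instead perform the blow-ups with centers chosen so that the special fiber over $0\in\mathbb{P}^1$ degenerates precisely to $S$: concretely, present $S$ itself as a blow-up of $\mathbb{P}^2$ (or of $\mathbb{P}^1\times\mathbb{P}^1$) at $9-d$ points in general position, lift that point configuration to a multisection meeting the special fiber in exactly those points, and blow up. Terminality and $\mathbb{Q}$-factoriality of $X$ must be verified at each stage; since all centers are smooth curves in a smooth or mildly singular threefold and we only contract divisors to curves or points, the singularities that appear are the standard cyclic-quotient or ordinary double points that are terminal, but this needs a case check.

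I would organize the surface-level bookkeeping via the induced fibration $\bar\pi\colon B_h\to\mathbb{P}^1$: the horizontal boundary $B_h$ is itself a surface fibered over $\mathbb{P}^1$, and controlling which curves appear as its general fibers (the content of the second half of the paper's program) is exactly what makes the MMP steps legitimate, since the contractions are forced to be compatible with the $B_h$-geometry. Thus the inductive step has the shape: ``given a completion in degree $d$ with known $(B_f,B_h)$ and known $\bar\pi$, produce a completion in degree $d'$ (the next allowed value) by a blow-up/contraction supported in $B$, with the new $\bar\pi'$ obtained by an elementary transformation of $\bar\pi$.'' For degrees $d\le 4$ I expect to need the more delicate constructions of \cite{DK17,DK18}, reinterpreted so as to pin down the isomorphism type of the general fiber, which those papers left unspecified; concretely this means choosing the infinitely-near or general-position configuration of the $9-d$ blown-up points on the model $\mathbb{P}^2$ to realize each of the finitely many (for $d\ge 3$) or moduli-many (for $d\le 2$) isomorphism classes.

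The main obstacle I anticipate is the simultaneous control of three things at each MMP step: (i) that the open complement of the transformed boundary remains isomorphic to $\mathbb{A}^3$ rather than merely to some other affine variety — this forces the blown-up center and every subsequently contracted divisor to stay inside the boundary, which constrains the geometry severely; (ii) that $X$ stays $\mathbb{Q}$-factorial with terminal singularities, so that $\pi$ remains a genuine del Pezzo fibration (extremal contraction of relative Picard rank one) — the danger is producing a fiber that is too singular or a total space that is not $\mathbb{Q}$-factorial; and (iii) hitting the prescribed \emph{isomorphism type} of $S$ as a closed fiber, which is the genuinely new content over \cite{DK17,DK18,DKP22}. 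Reconciling (i)--(iii) for the low-degree cases $d=1,2$, where $S$ varies in positive-dimensional moduli and the blown-up point configurations are correspondingly subtle, is where the bulk of the work will lie.
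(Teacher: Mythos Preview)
Your inductive descent is a genuinely different strategy from the paper's, and it has a real gap at the point you yourself flag as obstacle~(i): preserving the open $\mathbb{A}^3$ through each blow-up/contraction. Saying that ``the blown-up center lies in the boundary and the contracted locus again lies in the boundary'' is not enough. In the natural Sarkisov link that lowers the fiber degree---blow up a (multi)section contained in $B_h$, then contract the proper transform of $B_h$---the complement of the new boundary is not the old $\mathbb{A}^3$ but an \emph{affine blow-up} of it along the center, and affine blow-ups of $\mathbb{A}^3$ are almost never isomorphic to $\mathbb{A}^3$. The paper meets exactly this phenomenon in Section~3 (see Lemma~\ref{prop:non-normalA3} and the surrounding discussion): the resulting open set $\mathrm{Bl}_C^S V$ is isomorphic to $\mathbb{A}^3$ only when the triple $(C\subset S\subset V)$ is a linear $(\mathbb{A}^1\subset\mathbb{A}^2\subset\mathbb{A}^3)$, and arranging this is already delicate for a single link. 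Iterating such links while simultaneously forcing the center to meet a special fiber in a prescribed point configuration (your mechanism for realizing a given $S$) compounds the constraints and is nowhere addressed. There is also a slip in the outline: blowing up a curve $C\subset B_f$ contained in a single closed fiber cannot change the generic fiber at all, so only the multisection version of your construction is relevant to the degree drop---and that is precisely the version where the $\mathbb{A}^3$-preservation fails in general.

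The paper's route sidesteps this issue entirely. For each $d\in\{1,2,3,4,5,8,9\}$ it works \emph{directly in degree $d$}: it fixes a $\mathbb{Q}$-Fano threefold $X$ of divisor class rank one with $X\setminus H\cong\mathbb{A}^3$ (namely $\mathbb{P}^3$, $\mathbb{P}(1,1,1,2)$, $\mathbb{P}(1,1,2,3)$, a smooth quadric $Q\subset\mathbb{P}^4$, or $V_5$) and on it an ``$H$-special del Pezzo pencil'' $\psi$ (Definition~\ref{def:Special-del-Pezzo-pencil}) whose general member is already a del Pezzo surface of degree $d$ and which contains the prescribed $S$ among its members. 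Theorem~\ref{thm:Completion} then guarantees that any relative MMP from a thrifty resolution of $\psi$ terminates in a del Pezzo fibration of degree $d$ whose total space still contains $X\setminus H\cong\mathbb{A}^3$, with $S$ appearing as a closed fiber because the fibers away from $B_f$ are isomorphic to the members of $\psi$. No degree-changing link is needed, hence no affine-blow-up obstruction arises; the low-degree cases $d=1,2$ are in fact the \emph{easiest}, not the hardest, since the anti-canonical model sits in a weighted projective space and the pencil is the obvious one. Only for $d=6$, where no suitable Fano host with such a pencil is known, do the authors resort to explicit Sarkisov links of the kind you envisage, and there the verification that an $\mathbb{A}^3$ survives occupies the bulk of Section~3 and, in one instance (Theorem~\ref{thm:A3}), requires an entirely separate argument.
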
 

Given a completion of $\mathbb{A}^{3}$ into the total space of a
del Pezzo fibration $\pi:X\to\mathbb{P}^{1}$ of degree $d$ for which
the boundary $B=X\setminus\mathbb{A}^{3}$ consists, as in Theorem
A, of the union of a fiber $B_{f}$ of $\pi$ and of a prime divisor
$B_{h}$ which dominates $\mathbb{P}^{1}$, the structure of the induced
surjective morphism $\bar{\pi}:B_{h}\to\mathbb{P}^{1}$ is an interesting
invariant of the isomorphism type of $X$ as a completion of $\mathbb{A}^{3}$.
The triviality of the Picard group of $\mathbb{A}^{3}$ and the fact
that $\mathbb{A}^{3}$ has only constant invertible functions imply
that the Picard group of $X$ is freely generated by the classes of
$B_{h}$ and $B_{f}$, in particular, the Picard group of the fiber
$X_{\eta}$ of $\pi$ over the generic point $\eta$ of $\mathbb{P}^{1}$
is generated by $B_{h,\eta}=B_{h}\times_{X}X_{\eta}$. Since $X_{\eta}$
is a del Pezzo surface of degree $d$ over the function field $k(t)$
of $\mathbb{P}^{1}$, it follows that for every $d\leq6$, $B_{h,\eta}$
is the support of an integral anti-canonical divisor of $X_{\eta}$.
This implies in turn that for a general closed fiber $X_{c}$ of $\pi:X\to\mathbb{P}^{1}$,
the restriction $B_{h,c}=B_{h}\times_{X}X_{c}$ is an anti-canonical
divisor of $X_{c}$, hence is isomorphic to a certain plane cubic
curve over $k$, possibly reducible. Our second main result reads
as follows:

\begin{namedtheorem}[Theorem B]Let $C$ be an integral plane cubic
curve over $k$. Then for every $1\leq d\leq6$, there exists a completion
of $\mathbb{A}^{3}$ into the total space of a del Pezzo fibration
$\pi:X\to\mathbb{P}^{1}$ of degree $d$ with boundary $X\setminus\mathbb{A}^{3}=B_{h}\cup B_{f}$
such that $C$ appears as a closed fiber of the induced morphism $\bar{\pi}:B_{h}\setminus(B_{h}\cap B_{f})\to\mathbb{P}^{1}\setminus\pi(B_{f})$.

\end{namedtheorem}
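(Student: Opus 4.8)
The strategy is to revisit the constructions underlying Theorem~A together with the particular cases recorded in \cite{Pro16} (for $d=6$), \cite{DKP22} (for $d=5$) and \cite{DK17, DK18} (for $d\le 4$), and to keep track of the horizontal boundary throughout. In each of these, a completion $\pi\colon X\to\mathbb{P}^{1}$ of $\mathbb{A}^{3}$ by a del Pezzo fibration of degree $d\le 6$ is obtained by a sequence of blow-ups, flips and divisorial contractions relative to $\mathbb{P}^{1}$, starting from a product $S\times\mathbb{P}^{1}$ --- equivalently from $\mathbb{P}^{2}\times\mathbb{P}^{1}$ after blowing up $9-d$ sections --- where $S$ is a smooth del Pezzo surface of degree $d$, and in such a way that the horizontal boundary $B_{h}$ is the strict transform of $C_{0}\times\mathbb{P}^{1}$ for a chosen integral anticanonical curve $C_{0}\subset S$. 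Since this birational surgery is an isomorphism over a dense open subset $U\subseteq\mathbb{P}^{1}$ containing $\mathbb{P}^{1}\setminus\pi(B_{f})$ minus finitely many points, the induced morphism $\bar\pi\colon B_{h}\to\mathbb{P}^{1}$ restricts over $U$ to the first projection $C_{0}\times U\to U$; in particular a general closed fiber of $\bar\pi$ is isomorphic to $C_{0}$. Hence it suffices to run the construction with $C_{0}\cong C$ for the prescribed integral plane cubic $C$, so that $C$ itself appears as a (general, hence closed) fiber of $\bar\pi$ away from $\pi(B_{f})$.

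The first step is to produce such a pair $(S,C_{0})$. I would fix a plane cubic $C'\subset\mathbb{P}^{2}$ isomorphic to $C$ and choose $9-d$ points $p_{1},\dots,p_{9-d}$ on its smooth locus $C'_{\mathrm{sm}}$; for a general such choice the $p_{i}$ are in general position (no three collinear, no six on a conic, and, relevant only when $d=1$, no cubic through all of them singular at one of them), so $S:=\mathrm{Bl}_{p_{1},\dots,p_{9-d}}\mathbb{P}^{2}$ is a smooth del Pezzo surface of degree $d$. The strict transform $C_{0}\subset S$ of $C'$ has class $3H-\sum_{i}E_{i}=-K_{S}$, hence is anticanonical, and since every $p_{i}$ lies on $C'_{\mathrm{sm}}$ the induced morphism $C_{0}\to C'$ is an isomorphism, so $C_{0}$ is an integral anticanonical curve isomorphic to $C$. (In the $\mathbb{P}^{2}\times\mathbb{P}^{1}$ picture one simply takes the blown-up sections to lie in $C'_{\mathrm{sm}}\times\mathbb{P}^{1}$, in which case $B_{h}$ is literally birational to $C'\times\mathbb{P}^{1}$.)

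The second step is to carry out the surgery of \cite{Pro16, DKP22, DK17, DK18} and Theorem~A for this $(S,C_{0})$. The point is that the centres of the blow-ups occurring in the surgery meet a general fiber of the fibration either in the empty set or in finitely many points, and these can be taken to lie in the smooth locus of (the strict transform of) $C_{0}$ or off $C_{0}$ altogether; since blowing up a surface at a smooth point of a curve leaves the strict transform of that curve isomorphic to it, the general fiber of $\bar\pi$ then remains isomorphic to $C_{0}$. Moreover all the data that actually govern the surgery and the final verification --- fiber-class intersection numbers, the weighted dual graph of the total boundary, the types of the singular fibers created, the discrepancies computed along the way --- depend only on the numerical class $-K_{S}$ of $C_{0}$ and on the positions of the $p_{i}$, not on the isomorphism type of $C_{0}$. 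Thus the output is again a $\mathbb{Q}$-factorial threefold $X$ with terminal singularities together with a del Pezzo fibration $\pi\colon X\to\mathbb{P}^{1}$ of degree $d$ that is a Mori fiber space, with $X\setminus\mathbb{A}^{3}=B_{h}\cup B_{f}$ as in Theorem~A, and now with general closed fibers of $\bar\pi\colon B_{h}\setminus(B_{h}\cap B_{f})\to\mathbb{P}^{1}\setminus\pi(B_{f})$ isomorphic to $C_{0}\cong C$, which is the assertion.

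The main obstacle is precisely the claim that the surgery goes through unchanged: one must genuinely reread the constructions of \cite{Pro16, DKP22, DK17, DK18} and of Theorem~A and verify that the identification of the open complement with $\mathbb{A}^{3}$ --- the delicate, non-formal part there --- uses only the discrete data listed above, so that replacing the auxiliary anticanonical curve by an arbitrary integral plane cubic does not affect it. Two secondary points require care: the uniformity across the three groups of references covering $d=6$, $d=5$ and $d\le 4$; and the marginal situations, namely $d=1$, where $\lvert -K_{S}\rvert$ has a base point lying on $C_{0}$, and the case of $C$ cuspidal, where one must check that the surgery is still unobstructed and that the singular point of $C_{0}$ can be kept disjoint from all blown-up centres.
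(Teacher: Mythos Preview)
Your proposal rests on a mischaracterization of the constructions it cites. None of the completions in \cite{DK17,DK18,DKP22,Pro16}, nor those in this paper, are obtained by surgery starting from a product $S\times\mathbb{P}^{1}$ (or from $\mathbb{P}^{2}\times\mathbb{P}^{1}$ blown up in sections); indeed such a product has generic fiber of Picard rank $10-d$, and reaching relative Picard rank one from there while keeping track of an $\mathbb{A}^{3}$ inside is an entirely different problem from the one actually solved. For $d\leq5$ the paper instead chooses a Fano threefold $X$ of divisor class rank one with $X\setminus H\cong\mathbb{A}^{3}$ (namely $\mathbb{P}^{3}$, $\mathbb{P}(1,1,1,2)$, $\mathbb{P}(1,1,2,3)$, a smooth quadric $Q\subset\mathbb{P}^{4}$, or $V_{5}$), finds on it an $H$-special del Pezzo pencil whose base locus is a prescribed integral anti-canonical curve $D\cong C$ on a del Pezzo member $S$, and applies Theorem~\ref{thm:Completion}: the horizontal boundary $B_{h}$ then comes from the exceptional divisor of the graph resolution over $\mathrm{Bs}\psi=D$, so that every fiber of $\bar{\pi}$ over $\mathbb{P}^{1}\setminus\pi(B_{f})$ is isomorphic to $D$. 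The nontrivial work in each degree is to exhibit such a pencil for \emph{any} integral plane cubic $C$, which requires degree-specific arguments (e.g.\ the dimension count against the dual quadric in Proposition~\ref{prop:dP4-completion} for $d=4$, or the explicit description of anti-canonical curves on the $A_{4}$-hyperplane section of $V_{5}$ in Proposition~\ref{prop:dP5-completion}).

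For $d=6$ the paper does something entirely different again: it builds explicit Sarkisov links from certain quadric fibrations $q\colon Q\to\mathbb{P}^{1}$ to del Pezzo fibrations of degree $6$, with the $\mathbb{A}^{3}$ arising either as an affine blow-up (\S\ref{sec:deg6nonnormal}, yielding nodal or cuspidal general fibers of $\bar{\pi}$) or via a delicate argument involving Miyanishi's and Sathaye's characterizations of $\mathbb{A}^{3}$ applied to an explicit four-parameter family (\S3.2, yielding elliptic general fibers, with the $j$-invariant computed directly from the parameters). Your proposal not only misidentifies the starting point but, as you yourself note in the final paragraph, defers the entire content of the argument to an unperformed verification; that verification cannot succeed as stated because the constructions you would be ``rereading'' simply do not have the shape you describe.
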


Theorem B says in particular that for every $d\in\{1,\ldots,6\}$,\textbf{
}every elliptic curve can be realized as a closed fiber of the induced
fibration $\bar{\pi}:B_{h}\to\mathbb{P}^{1}$ on a suitably chosen
completion of $\mathbb{A}^{3}$ into the total space of a del Pezzo
fibration $\pi:X\to\mathbb{P}^{1}$ of degree $d$. \\

Our general approach to prove Theorem A and Theorem B builds on techniques
initiated in \cite{DK17,DK18} and developed further in higher dimension
in \cite{DKP22} which consist of finding suitable pencils of del
Pezzo surfaces on known completions of $\mathbb{A}^{3}$ into $\mathbb{Q}$-Fano
threefolds of Picard rank one with terminal singularities and running
relative MMP's from appropriate resolution of indeterminacy of these
pencils. For a given pencil, the outputs of these relative MMP's depend
not only on the chosen resolution and of choices made at each step
of the process, but there is also no reason in general that these
steps preserve an initially prescribed open subset isomorphic to $\mathbb{A}^{3}$.
The main point of these techniques thus lies in the possibility to
find pencils and appropriate classes of resolutions for which these
MMP runs can be controlled effectively enough to ascertain that their
outputs are del Pezzo fibrations whose total spaces contain $\mathbb{A}^{3}$
as an open subset. Section 1 reviews a general framework for these
types of constructions, largely inspired from \cite{DK17,DK18,DKP22}.
In Section 2, we illustrate the use of these techniques for the construction
of completions of $\mathbb{A}^{3}$ into total spaces of del Pezzo
fibrations of every degree other that $6$ and $7$, with a particular
focus on the isomorphism types of fibers of these del Pezzo fibrations
and the structure of the boundary divisors of the completions. Section
3 is specifically devoted to the case $d=6$ which, due in particular
to our lack of knowledge of completions of $\mathbb{A}^{3}$ into
$\mathbb{Q}$-Fano threefolds of Picard rank one carrying suitable
pencils of del Pezzo surfaces of degree $6$ that fits into our general
framework, is treated by a different approach involving the construction
of explicit Sarkisov links between certain quadric fibrations and
del Pezzo fibrations of degree $6$ over $\mathbb{P}^{1}$. 
\begin{acknowledgement*}
The authors thank the Institut de Math\'{e}matiques de Bourgogne at which
the present research was initiated during visits of the second and
the third authors and Saitama University at which it was continued
during a visit of the first and the third authors for their generous
support and the excellent working conditions offered. The first author
was partially supported by the French ANR Project FIBALGA ANR-18-CE40-0003-01.
The second author was partially funded by JSPS KAKENHI Grant Number
19K03395 and 23K03047. The third author was partially supported by
JSPS KAKENHI Grant Number JP21K13768. 
\end{acknowledgement*}

\section{Del Pezzo fibration completions from del Pezzo surfaces pencils }

In this section, we review a general strategy to construct del Pezzo
fibration completions of certain quasi-projective threefolds from
suitable pencils of del Pezzo surfaces on Fano threefolds of divisor
class rank one with terminal singularities. 
\begin{notation*}
Unless otherwise explicitly stated, all varieties and schemes considered
are defined over a fixed algebraically closed field $k$ of characteristic
zero. For varieties and their singularities in the framework of the
Minimal Model Program, we follow the standard conventions and terminology
in \cite{Kol13,KM98}. 
\end{notation*}

\subsection{Pencils of divisors and their resolutions}

A \emph{pencil of divisors} on a normal projective variety $X$ is
a dominant rational map $\psi:X\dashrightarrow\mathbb{P}^{1}$. Since
$X$ is normal, such a pencil is determined by an equivalence class
of pairs $(\mathcal{F},V)$, where $\mathcal{F}$ is a coherent reflexive
sheaf of rank $1$, isomorphic to $\mathcal{O}_{X}(D)$ for some Weil
divisor $D$ on $X$, and where $V\subset H^{0}(X,\mathcal{F})$ is
a $2$-dimensional $k$-vector subspace with the property that the
support of the cokernel of the canonical evaluation homomorphism $e:V\otimes_{k}\mathcal{O}_{X}\to\mathcal{F}$
has codimension $\geq2$ in $X$. Two pairs $(\mathcal{F},V)$ and
$(\mathcal{F}',V')$ are called equivalent if and only if there exists
an isomorphism of $\mathcal{O}_{X}$-modules $\alpha:\mathcal{F}'\to\mathcal{F}$
such that $H^{0}(\alpha)(V')=V$. 

For every closed point $c$ of $\mathbb{P}^{1}$, we denote by $D_{c}\subset X$
the scheme-theoretic closure in $X$ of the scheme-theoretic fiber
over $c$ of the restriction $\psi|_{U_{\psi}}:U_{\psi}\to\mathbb{P}^{1}$
of $\psi$ to its domain of definition $U_{\psi}$. Since $\mathrm{codim}_{X}(X\setminus U_{\psi})\geq2$,
$D_{c}$ is a Weil divisor on $X$ for which $\mathcal{O}_{X}(D_{c})\cong\mathcal{F}$.
We call the divisors $D_{c}$, where $c$ ranges through the set of
closed points of $\mathbb{P}^{1}$, the \emph{members} of the pencil
$\psi$. The\emph{ base scheme} $\mathrm{Bs}\psi$ of a pencil of
divisors $\psi:X\dashrightarrow\mathbb{P}^{1}$ is the scheme-theoretic
intersection in $X$ of all its members. It is a closed sub-scheme
of $X$ whose associated reduced scheme equals $X\setminus U_{\psi}$
endowed with its reduced scheme structure. 
\begin{notation}
Given a pair of linearly equivalent Weil divisors $D$ and $D'$ on
$X$ without common irreducible components, the \emph{pencil generated
by $D$ and $D'$} is the pencil of divisors $\psi_{\langle D,D'\rangle}:X\dashrightarrow\mathbb{P}^{1}$
on $X$, unique up to an isomorphism, which has $D$ and $D'$ among
its members. Its base scheme is equal to the scheme-theoretic intersection
of $D$ and $D'$ in $X$. 
\end{notation}

A \emph{resolution} of a pencil of divisors $\psi:X\dashrightarrow\mathbb{P}^{1}$
is a birational morphism $\tau:X'\to X$ from a projective variety
$X'$ such that $\psi\circ\tau$ is a morphism. The \emph{graph} of
a pencil of divisors $\psi:X\dashrightarrow\mathbb{P}^{1}$ is the
scheme-theoretic closure $\Gamma$ in $X\times\mathbb{P}^{1}$ of
the graph of the restriction of $\psi$ to its domain of definition.
We denote by $\gamma:\Gamma\to X$ and $\mathrm{q}:\Gamma\to\mathbb{P}^{1}$
the restrictions to $\Gamma$ of the projections of $X\times\mathbb{P}^{1}$
onto its factors. The equality $\mathrm{q=}\psi\circ\gamma$ implies
that $\gamma:\Gamma\to X$ is a resolution of $\psi$, which we henceforth
call the \emph{graph resolution} of $\psi$. We denote by $E_{\Gamma}\subset\Gamma$
its exceptional locus. This resolution has the universal property
that for every resolution $\tau:X'\to X$ of $\psi$, the induced
birational map $\gamma^{-1}\circ\tau:X'\dashrightarrow\Gamma$ is
a projective morphism. The following proposition collects other properties
of the graph resolution: 
\begin{prop}[{\cite[Proposition 3.3]{DKP22}}]
\label{prop:Graph-resolution-properties} For a pencil $\psi:X\dashrightarrow\mathbb{P}^{1}$
on a normal projective variety $X$, the following hold:

a) The graph resolution $\gamma:\Gamma\to X$ restricts to an isomorphism
over $X\setminus\mathrm{Bs}\psi$ and for every closed point $x$
of $\mathrm{Bs}\psi$, $\gamma^{-1}(x)\subset\Gamma$ is a section
of $\mathrm{q}:\Gamma\to\mathbb{P}^{1}$. 

b) For every closed point $c\in\mathbb{P}^{1}$, the fiber $\Gamma_{c}=\mathrm{q}^{-1}(c)$
equals the proper transform $\gamma_{*}^{-1}D_{c}$ in $\Gamma$ of
the member $D_{c}$ of $\psi$ and the birational morphism $\gamma_{c}:(\Gamma_{c},(E_{\Gamma}\cap\Gamma_{c})_{\mathrm{red}})\to(D_{c},(\mathrm{Bs}\psi)_{\mathrm{red}})$
induced by $\gamma$ is an isomorphism of pairs. 
\end{prop}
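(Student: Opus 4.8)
The strategy is to reduce all four assertions to the elementary description of $\gamma$ as a blow-up along the base ideal over the big open subset of $X$ on which $\mathcal{F}$ is invertible, and then to globalise using the codimension condition $\mathrm{codim}_X\mathrm{Bs}\psi\ge 2$ together with the properness of $\gamma$. The first assertion of (a) is immediate: since $(\mathrm{Bs}\psi)_{\mathrm{red}}=X\setminus U_\psi$, over $U_\psi$ the pencil is an honest morphism, so the graph of $\psi|_{U_\psi}$ is a closed irreducible subvariety of $U_\psi\times\mathbb{P}^1$ of dimension $\dim X$, hence coincides with $\gamma^{-1}(U_\psi)=\Gamma\cap(U_\psi\times\mathbb{P}^1)$, and on it $\gamma$ is inverse to $u\mapsto(u,\psi(u))$.

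Next, let $X_0\subseteq X$ be the locus where $\mathcal{F}$ is locally free; as $X$ is normal, $X_0$ contains $X_{\mathrm{sm}}$ and $\mathrm{codim}_X(X\setminus X_0)\ge 2$. On an affine open $U\subseteq X_0$ trivialising $\mathcal{F}$, the pencil is given by two regular functions $s_0,s_1\in\mathcal{O}(U)$ with $\mathrm{codim}\,V(s_0,s_1)\ge 2$ and $V(s_0,s_1)=\mathrm{Bs}\psi\cap U$, and $\gamma^{-1}(U)=\mathrm{Bl}_{(s_0,s_1)}U$ is the subvariety of $U\times\mathbb{P}^1$ determined by $s_1T_0=s_0T_1$. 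From this model one reads directly that for a closed point $x\in V(s_0,s_1)$ the fibre $\gamma^{-1}(x)$ is $\{x\}\times\mathbb{P}^1$, on which $\mathrm{q}$ restricts to an isomorphism onto $\mathbb{P}^1$; and that for a member $D_c$ with $c=[\lambda:\mu]$ (say $\lambda\neq 0$), the total transform $\gamma^{-1}(D_c)$ splits as the exceptional divisor $V(s_0)$ together with the sheet $\{\mu T_0=\lambda T_1\}$, which is at once $\mathrm{q}^{-1}(c)$ and the strict transform $\gamma_*^{-1}D_c$. The one point to be careful about is why the induced morphism $\gamma_c$ is an isomorphism and not merely birational: this comes from the fact that, being by definition the scheme-theoretic closure of the scheme-theoretic fibre $\psi^{-1}(c)\cap U_\psi$, the member $D_c$ is cut out on $U$ by the saturated ideal $(\mu s_0-\lambda s_1):s_0^\infty$ — equivalently, $s_0$ is a nonzerodivisor on $D_c$ — which is exactly the ideal of $\gamma_*^{-1}D_c$ read back on $U$. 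Comparing ideals then shows $\gamma_c$ restricts to the identity over $U$, and this establishes (a) and (b) over $X_0$; over $U_\psi$, where $\gamma$ is an isomorphism, there is nothing to prove.

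To globalise, fix a closed point $x\in\mathrm{Bs}\psi$. Every member $D_c$ contains $\mathrm{Bs}\psi$, hence $x$; since $\gamma$ is proper and restricts to an isomorphism over the dense open subset $D_c\setminus\mathrm{Bs}\psi$ of $D_c$, we get $\gamma(\gamma_*^{-1}D_c)=D_c$, so there is a point of $\Gamma$ over $(x,c)$. Letting $c$ vary yields $\{x\}\times\mathbb{P}^1\subseteq\Gamma$; since $\mathrm{q}$ maps $\{x\}\times\mathbb{P}^1$ isomorphically onto $\mathbb{P}^1$, we conclude $\gamma^{-1}(x)_{\mathrm{red}}=\{x\}\times\mathbb{P}^1$ is a section of $\mathrm{q}$, finishing (a). In particular $E_\Gamma=\gamma^{-1}(\mathrm{Bs}\psi)$ is covered by these sections, so every irreducible component of $E_\Gamma$ dominates $\mathbb{P}^1$; as $\dim E_\Gamma\le\dim\mathrm{Bs}\psi+1\le\dim X-1$ while $\mathrm{q}^{-1}(c)$ is a Cartier divisor on the variety $\Gamma$, pure of dimension $\dim X-1$, no component of $\mathrm{q}^{-1}(c)$ lies in $E_\Gamma$ and $\mathrm{q}^{-1}(c)\cap E_\Gamma$ has dimension $\le\dim X-2$. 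Combining this with the fact that $\mathrm{q}^{-1}(c)$ and $\gamma_*^{-1}D_c$ coincide over $\gamma^{-1}(U_\psi\cup X_0)$, whose complement has codimension $\ge 1$ in $\mathrm{q}^{-1}(c)$, and with the saturatedness of the scheme-theoretic closures defining $D_c$ and $\gamma_*^{-1}D_c$, one gets $\Gamma_c=\mathrm{q}^{-1}(c)=\gamma_*^{-1}D_c$ and, by the previous paragraph, that $\gamma_c$ is an isomorphism. Finally $(E_\Gamma\cap\Gamma_c)_{\mathrm{red}}$ is carried by $\gamma_c$ onto $(\mathrm{Bs}\psi)_{\mathrm{red}}\cap D_c$ because $E_\Gamma\cap\gamma^{-1}(U_\psi)=\emptyset$ while $\gamma$ maps $\gamma^{-1}(U_\psi)$ isomorphically onto $X\setminus\mathrm{Bs}\psi$.

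The step I expect to be the main obstacle is the scheme-theoretic part of this last globalisation over the singular locus of $X$: away from $X_0$ the blow-up model is unavailable, and one must rule out that $\mathrm{q}^{-1}(c)$ acquires an embedded component along $\gamma^{-1}(\mathrm{Bs}\psi\cap\mathrm{Sing}\,X)$ — equivalently, that $\gamma_c$ is there merely a finite birational morphism rather than an isomorphism. This is precisely where the hypotheses genuinely enter: the bound $\mathrm{codim}_X\mathrm{Bs}\psi\ge 2$ confines the problematic locus, and the definition of $D_c$ and of $\gamma_*^{-1}D_c$ as (hence saturated) scheme-theoretic closures is what matches the two schemes; making this comparison airtight, rather than the dimension bookkeeping or the explicit computation on $X_0$, is where the real technical work lies.
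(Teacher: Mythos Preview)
The paper does not give its own proof of this proposition: it is stated with a direct citation to \cite[Proposition 3.3]{DKP22} and is used as a black box thereafter. There is therefore no in-paper argument to compare your attempt against.

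That said, your approach---identify $\gamma$ over the open locus $X_0$ where $\mathcal{F}$ is invertible with the blow-up of the base ideal, read off (a) and (b) from the explicit model $\{s_1T_0=s_0T_1\}\subset U\times\mathbb{P}^1$, and then globalise using properness of $\gamma$ and the bound $\mathrm{codim}_X(X\setminus X_0)\ge 2$---is the standard one and is essentially what the cited reference does. Your handling of (a) is clean: the containment $\gamma^{-1}(x)\subseteq\{x\}\times\mathbb{P}^1$ is automatic from $\Gamma\subset X\times\mathbb{P}^1$, and your surjectivity argument via the members $D_c$ gives the reverse inclusion at the level of sets, which is all the statement asserts.

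The technical worry you flag---that over $\mathrm{Bs}\psi\cap\mathrm{Sing}\,X$ the equality $\mathrm{q}^{-1}(c)=\gamma_*^{-1}D_c$ might fail scheme-theoretically---is legitimate in full generality, and your sketch there (matching closures via saturation) is the right idea but would need to be made precise. Note, however, that in every application made in the present paper the base scheme $\mathrm{Bs}\psi$ lies in the Cartier locus of the members (see Remark~\ref{rem:DelPezzo-Pencil-Consequences}~1) and the discussion in Proposition~\ref{prop:Graph-Special-delPezzo-pencil}), so $\mathrm{Bs}\psi\subset X_0$ and the delicate globalisation step is never actually needed here.
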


Recall that a $\mathbb{Q}$-\emph{factorial terminalization} of a
normal projective variety $X$ is a proper birational morphism $f:X'\to X$
such that $X'$ is a projective $\mathbb{Q}$-factorial variety with
terminal singularities such that $K_{X'}$ is $f$-nef. Every normal
projective variety $X$ admits $\mathbb{Q}$-factorial terminalizations
\cite[Theorem 1.33 and Corollary 1.37]{Kol13}, moreover, the restriction
of any $\mathbb{Q}$-factorial terminalization $f:X'\to X$ of $X$
over every $\mathbb{Q}$-factorial open subset of $X$ with terminal
singularities is an isomorphism. The following notion was introduced
in \cite{DKP22}: 
\begin{defn}
A \emph{thrifty resolution} of a pencil of divisors $\psi:X\dashrightarrow\mathbb{P}^{1}$
on a normal projective variety $X$ is a resolution $\tau:X'\to X$
of $\psi$ where $X'$ is a $\mathbb{Q}$-factorial projective variety
with terminal singularities such that the canonically induced morphism
$\sigma:X'\to\Gamma$ is a $\mathbb{Q}$-factorial terminalization
of the normalization of $\Gamma$. 
\end{defn}

\subsection{del Pezzo fiber spaces and relative MMP's }

By a \emph{del Pezzo surface} over a field $F$ of characteristic
zero, we mean a geometrically connected surface $S$ projective and
smooth over $F$ whose anti-canonical invertible sheaf $\omega_{S/F}^{\vee}=\det\Omega_{S/F}^{\vee}$
is ample. The \emph{degree} of $S$ is the integer $d(S)=\dim H^{0}(S,\omega_{S/F}^{\vee})-1.$
Note that by the flat base change and the Kodaira vanishing theorem
one has $H^{i}(S,\mathcal{O}_{S})=H^{i}(S,\omega_{S/F}^{\vee})=0$
for $i\geq1$, and it follows in turn from the Riemman-Roch theorem
that $d(S)=\deg(c_{1}(\omega_{S/F}^{\vee}))^{2}=(-K_{S})^{2}$, where
$-K_{S}$ is any anti-canonical divisor on $S$. In what follows,
unless otherwise explicitly stated, the term del Pezzo surface always
refers to a del Pezzo surface over $k$. In the next sections, we
will freely make use of many classical results on del Pezzo surfaces,
their anti-canonical divisors and their anti-canonical models, referring
the reader for instance to \cite{Dol12,Man74} for the details. We
record the following lemma which is a straightforward consequence
of the fact that every del Pezzo surface of degree $d$ over an algebraically
closed field of characteristic zero is either isomorphic to $\mathbb{P}^{1}\times\mathbb{P}^{1}$or
to the blow-up of $\mathbb{P}^{2}$ at $9-d$ closed points in general
position. 
\begin{lem}
\label{lem:Prime-anticanonical=00003Dplane-cubic-curve}The support
of a prime anti-canonical divisor on a del Pezzo surface is isomorphic
to an integral plane cubic curve $C$. Conversely, for every degree
$d\in\{1,\ldots,9\}$ and every integral plane cubic curve $C$, there
exists a del Pezzo surface $S$ of degree $d$ and a prime anti-canonical
divisor $D$ on $S$ such that $D\cong C$. 
\end{lem}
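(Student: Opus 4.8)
The plan is to establish the two implications independently: the direct one via the classification of integral projective curves of arithmetic genus one, and the converse via an explicit blow-up construction whose only delicate ingredient is a general position statement.

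For the direct implication, let $D$ be a prime anti-canonical divisor on a del Pezzo surface $S$. Since $D$ is an effective divisor linearly equivalent to $-K_S$ on the smooth surface $S$, the adjunction formula gives $\omega_D\cong\mathcal{O}_S(K_S+D)|_D\cong\mathcal{O}_D$, and as $D$ is integral, hence connected, one gets $p_a(D)=h^0(D,\omega_D)=h^0(D,\mathcal{O}_D)=1$. Using again that $D$ is integral, the identity $p_a(D)=g(\widetilde D)+\sum_{x\in D}\delta_x$, where $\nu\colon\widetilde D\to D$ is the normalization, $g(\widetilde D)$ its genus and $\delta_x$ the local delta-invariant of $D$ at $x$, leaves exactly three possibilities: either $g(\widetilde D)=1$ and $D$ is smooth, or $\widetilde D\cong\mathbb{P}^1$ and $D$ has a unique singular point, whose delta-invariant equals one and which is therefore a node or a cusp. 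In the first case $D$ equipped with a $k$-rational point is an elliptic curve, hence has a Weierstrass model realizing it as a smooth plane cubic; in the two other cases $D$ is, up to isomorphism, respectively the unique rational curve with one node and the unique rational curve with one cusp, that is, a nodal, respectively cuspidal, plane cubic curve. In every case $D$ is isomorphic to an integral plane cubic.

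For the converse, fix $d\in\{1,\ldots,9\}$ and an integral plane cubic curve $C\subset\mathbb{P}^2$. When $d=9$ I take $S=\mathbb{P}^2$ and $D=C$; since $|-K_{\mathbb{P}^2}|$ is the complete linear system of plane cubics and $C$ is integral, $D$ is a prime anti-canonical divisor. When $d\leq 8$ I choose $9-d$ distinct closed points $p_1,\ldots,p_{9-d}$ lying in the smooth locus of $C$ and in general position in $\mathbb{P}^2$, I let $\pi\colon S\to\mathbb{P}^2$ be the blow-up of these points, with exceptional curves $E_1,\ldots,E_{9-d}$, and I let $D$ be the proper transform of $C$. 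Then $S$ is a del Pezzo surface of degree $d$ by the classical description of del Pezzo surfaces as blow-ups of $\mathbb{P}^2$ at points in general position; since each $p_i$ is a smooth point of $C$, the class of $D$ equals $3\pi^{*}H-\sum_i E_i=-K_S$, where $H$ is the class of a line, and $\pi$ restricts to an isomorphism from $D$ onto $C$. Hence $D$ is a prime anti-canonical divisor on $S$ isomorphic to $C$.

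What remains is to produce, for each $1\leq d\leq 8$, a suitable configuration of $9-d$ points on the smooth locus of $C$. For $d\in\{7,8\}$ this is trivial, any set of at most two distinct points being in general position. For $d\leq 6$ I would recall that general position means here that no three of the points are collinear, no six lie on a conic, and, only in the case $d=1$, that no plane cubic passing through all eight of them is singular at one of them; I would then check that each of these finitely many forbidden conditions cuts out a closed subset of the $(9-d)$-fold self-product of the smooth locus of $C$ of dimension strictly less than $9-d$, because B\'ezout's theorem together with elementary parameter counts shows that, once all but one of the relevant points are fixed, only finitely many positions remain for the last one---$C$ being contained neither in a line nor in a conic, and equal to no cubic that is singular at a smooth point of $C$. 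As this self-product is irreducible of dimension $9-d$ and $k$ is algebraically closed, the complement of the union of the forbidden loci is a nonempty open set with a $k$-point, which yields the desired configuration. I expect this last verification, and in particular the bookkeeping in the degree one case, to be the only genuine---though still routine---difficulty; everything else is formal.
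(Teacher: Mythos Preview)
Your proof is correct. The paper does not give a detailed argument: it simply records the lemma as a ``straightforward consequence'' of the fact that every del Pezzo surface of degree $d$ is either $\mathbb{P}^1\times\mathbb{P}^1$ or a blow-up of $\mathbb{P}^2$ at $9-d$ points in general position, leaving the reader to fill in both directions from that classification.

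Your converse follows exactly the route the paper has in mind. Your direct implication, however, is slightly different and arguably cleaner: rather than invoking the classification of del Pezzo surfaces and tracing the anti-canonical divisor back to a plane cubic through the blow-down (with a separate check for $\mathbb{P}^1\times\mathbb{P}^1$), you work intrinsically via adjunction to get $p_a(D)=1$ and then classify integral projective curves of arithmetic genus one. This avoids any case distinction on $S$ and makes the statement visibly independent of the structure theory of del Pezzo surfaces. The trade-off is that you then need the (easy) classification of $\delta=1$ curve singularities, whereas the paper's route needs nothing beyond the blow-up description. Your honest flagging of the general-position bookkeeping for $d=1$ is appropriate; the paper does not spell that out either.
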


\begin{defn}
A \emph{del Pezzo fiber space of degree} $d$ over a smooth connected
curve $C$ is an irreducible threefold $X$ endowed with a faithfully
flat projective morphism $\pi:X\to C$ whose fiber $X_{\eta}$ over
the generic point $\eta$ of $C$ is a del Pezzo surface of degree
$d$ over the field of functions $k(C)$ of $C$. A fiber of $\pi:X\to C$
is called \emph{degenerate} if it is either not an integral scheme
or is not contained in the regular locus $X_{\mathrm{reg}}$ of $X$.
We denote by $\delta_{X/C}\subset C$ the finite set of points over
which the fiber of $\pi$ is degenerate and by $V_{X/C}\subseteq X_{\mathrm{reg}}$
the open complement of $\pi^{-1}(\delta_{X/C})$ in $X$. 

A \emph{del Pezzo fibration} is a del Pezzo fiber space $\pi:X\to C$
of relative Picard rank one such that $X$ is $\mathbb{Q}$-factorial
with terminal singularities and $-K_{X}$ is $\pi$-ample. 
\end{defn}

\begin{lem}
\label{lem:delPezzo-fibrebundle-properties}Let $C$ be a smooth connected
curve, let $X$ be an irreducible threefold and let $d\in\{1,\ldots,9\}$
be an integer. For a faithfully flat projective morphism $\pi:X\to C$,
the following properties are equivalent: 

a) The morphism $\pi:X\to C$ is a del Pezzo fiber space of degree
$d$. 

b) There exists a closed point $c\in C$ such that the fiber $X_{c}=\pi^{-1}(c)$
is a del Pezzo surface of degree $d$.

c) A general closed fiber of $\pi$ is a del Pezzo surface of degree
$d$. 
\end{lem}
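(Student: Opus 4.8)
The plan is to prove the cycle of implications (c) $\Rightarrow$ (b) $\Rightarrow$ (a) $\Rightarrow$ (c); the first is trivial since a general fiber is in particular a fiber. The common principle behind the other two is that each of the three conditions is controlled by the restriction of $\pi$ over the generic point $\eta$ of $C$, and that the set of points of $C$ over which the fiber of $\pi$ is a del Pezzo surface of degree $d$ is open and nonempty. So the proof is a spreading-out/specialization argument assembled from standard openness and semicontinuity results for the proper faithfully flat morphism $\pi$.

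For (b) $\Rightarrow$ (a), starting from a closed point $c$ with $X_{c}$ a del Pezzo surface of degree $d$, I would first produce a nonempty open $U\subseteq C$ over which $\pi$ is smooth, namely the complement of the image in $C$ of the non-smooth locus of $\pi$, which is closed because $\pi$ is proper; this $U$ contains $c$, hence $\eta$. Over $U$ the relative anti-canonical sheaf of $\pi$ is invertible and restricts on each fiber $X_{c'}$ to $\omega_{X_{c'}/\kappa(c')}^{\vee}$. I would then verify that $X_{\eta}$ is a del Pezzo surface of degree $d$ over $k(C)$ in three steps: (i) smoothness and properness are inherited from $U$; (ii) $H^{0}(X_{\eta},\mathcal{O}_{X_{\eta}})=k(C)$, from upper semicontinuity of $c'\mapsto\dim_{\kappa(c')}H^{0}(X_{c'},\mathcal{O}_{X_{c'}})$ and the value $1$ at $c$ (a del Pezzo surface over the algebraically closed field $k$ being connected), which together with smoothness forces $X_{\eta}$ to be geometrically integral; (iii) ampleness of $\omega_{X_{\eta}/k(C)}^{\vee}$, deduced from the ampleness of $\omega_{X_{c}/k}^{\vee}$ by openness of ampleness in proper flat families. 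To pin the degree to $d$, I would use Kodaira vanishing on the del Pezzo fibers to write $d(X_{c'})=\chi(X_{c'},\omega_{X_{c'}}^{\vee})-1$ for $c'\in\{c,\eta\}$, and the invariance of Euler characteristics of a flat coherent sheaf over the connected base $U$ to conclude that these two values coincide.

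For (a) $\Rightarrow$ (c), I would run the mirror spreading-out argument. From $X_{\eta}$ a del Pezzo surface of degree $d$ over $k(C)$, I would find a nonempty open $U\subseteq C$ over which simultaneously $\pi$ is smooth (openness of the smooth locus), the relative anti-canonical sheaf is relatively ample (openness of ampleness, applied at $\eta$), and $\dim_{\kappa(c')}H^{0}(X_{c'},\mathcal{O}_{X_{c'}})=1$ (semicontinuity, from the value $1$ at $\eta$). Then for a closed point $c'\in U$, $X_{c'}$ is a smooth connected---hence, over $k$, integral---projective surface with ample anti-canonical sheaf, so it is a del Pezzo surface, of degree $\chi(X_{c'},\omega_{X_{c'}}^{\vee})-1=\chi(X_{\eta},\omega_{X_{\eta}}^{\vee})-1=d$ by the same Euler-characteristic computation. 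As a general closed fiber lies over a point of the nonempty open $U$, this yields (c).

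The only point needing genuine care is the comparison of the degree of the generic fiber, a del Pezzo surface over the non-closed field $k(C)$, with that of a general closed fiber over $k$. I expect this to be the main---though modest---obstacle; it is settled by combining Riemann--Roch on a smooth projective surface (so that $(-K)^{2}$, $\chi(\mathcal{O})$ and $\chi(\omega^{\vee})$ determine one another), the deformation invariance of these Euler characteristics over the connected open $U$, and Kodaira vanishing on del Pezzo surfaces to pass between $\chi(\omega^{\vee})$ and $h^{0}(\omega^{\vee})$. Everything else reduces to standard facts: openness of the smooth locus of a morphism of finite type, the semicontinuity theorem, and openness of ampleness in proper flat families.
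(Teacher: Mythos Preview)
Your proposal is correct and follows essentially the same route as the paper: both arguments reduce to openness of smoothness and of relative ampleness together with a cohomological comparison to pin down the degree. The only presentational differences are that the paper treats the three conditions simultaneously (each forces the relevant open subsets of $C$ to be dense, after which one works over their intersection) rather than as a cycle of implications, and that for the degree the paper invokes cohomology-and-base-change to get local freeness of $\pi_{*}\omega_{X/C}^{\vee}$ whereas you use invariance of $\chi(\omega^{\vee})$ directly; these are interchangeable.
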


\begin{proof}
Since $\pi:X\to C$ is flat and projective, the subset $W$ of $C$
consisting of points $c$ such that $\pi^{-1}(c)$ is regular and
geometrically connected is open by \cite[Th\'eor\`eme 12.2.4]{Gro66}.
Furthermore, by \cite[Proposition 17.8.2]{Gro67}, the restriction
of $\pi$ over $W$ is a smooth projective morphism with geometrically
connected fibers. Since each of the properties listed implies that
$W$ is Zariski dense, to prove the assertion, up to replacing $C$
by an affine open subset $W$, we can assume without loss of generality
that $C$ is affine and that $\pi:X\to C$ is a smooth projective
morphism with geometrically connected fibers. Up to shrinking $C$
further if necessary, we can assume as well that the canonical sheaf
$\omega_{C}$ is trivial, hence that the canonical sheaf $\omega_{X}=\Lambda^{3}\Omega_{X/k}$
equals the relative canonical sheaf $\omega_{X/C}=\Lambda^{2}\Omega_{X/C}$.
By \cite[Th\'eor\`eme 4.7.1]{Gro61}, the subset $W'$ of $C$ consisting
of points such the restriction of $\omega_{X/C}^{\vee}$ to $X_{c}$
is ample is open. Since for every $c\in C$, $\omega_{X/C}^{\vee}|_{X_{c}}\cong\omega_{X_{c}/\kappa(c)}^{\vee}$,
where $\kappa(c)$ denotes the residue field of $c$, each of the
properties listed implies that $W'$ is Zariski dense. Thus, by replacing
$C$ by $W'$, we are reduced to the situation where $\pi:X\to C$
is a smooth projective morphism of relative dimension $2$ with geometrically
connected fibers such that $\omega_{X}^{\vee}\cong\omega_{X/C}^{\vee}$
is $\pi$-ample. The vanishing of $H^{i}(X_{c},\omega_{X_{c}/\kappa(c)}^{\vee})$
for every $c\in C$ and every $i\geq1$ then implies by \cite[Corollaire 7.9.9]{Gro63}
that $\pi_{*}\omega_{X/C}^{\vee}$ is locally free, hence that all
fibers of $\pi$ are del Pezzo surfaces of the same degree $d=\mathrm{rk\pi_{*}\omega_{X/C}^{\vee}-1}$. 
\end{proof}
\begin{example}
Let $\psi:X\dashrightarrow\mathbb{P}^{1}$ be a pencil of divisors
on a normal projective threefold $X$ which has a del Pezzo surface
of degree $d$ among its members and let $\gamma:\Gamma\to X$ be
its graph resolution. Proposition \ref{prop:Graph-resolution-properties}
b) and Lemma \ref{lem:delPezzo-fibrebundle-properties} imply that
the induced morphism $\mathrm{q}=\psi\circ\gamma:\Gamma\to\mathbb{P}^{1}$
is a del Pezzo fiber space of degree $d$ whose closed fibers are
isomorphic to the members of $\psi$. In particular, a general member
of $\psi$ is a del Pezzo surface of degree $d$. 
\end{example}

Let $\pi_{Y}:Y\to C$ be a del Pezzo fiber space of degree $d$ where
$Y$ is a $\mathbb{Q}$-factorial threefold with terminal singularities.
Recall \cite[3.31]{KM98} that a MMP $\varphi:Y=Y_{0}\dashrightarrow Y{}_{n}=\tilde{Y}$
relative to $\pi{}_{Y,0}=\pi_{Y}$ is a finite sequence $\varphi=\varphi_{n}\circ\cdots\circ\varphi_{1}$
of birational maps 
\begin{eqnarray*}
Y_{\ell-1} & \stackrel{\varphi_{\ell}}{\dashrightarrow} & Y_{\ell}\\
\pi{}_{Y,\ell-1}\downarrow &  & \downarrow\pi{}_{Y,\ell}\qquad\ell=1,\ldots,n,\\
C & = & C
\end{eqnarray*}
where each $\varphi_{\ell}$ is either a divisorial contraction associated
to an extremal ray $R_{\ell-1}$ of the closure $\overline{NE}(Y{}_{\ell-1}/C)$
of the relative cone of effective curves of $Y_{\ell-1}$ over $C$,
which contracts a prime divisor either horizontal for $\pi_{Y,\ell-1}:Y_{\ell-1}\to C$
or contained in a reducible closed fiber of $\pi_{Y,\ell-1}:Y_{\ell-1}\to C$,
or a flip whose flipping and flipped curves are contained in the fibers
of $\pi{}_{Y,\ell-1}$ and $\pi{}_{Y,\ell}$ respectively, and such
that $\tilde{\pi}_{Y}=\pi{}_{Y,n}:\tilde{Y}=Y_{n}\to C$ is one of
the following:

1) A del Pezzo fibration $\tilde{\pi}:\tilde{Y}\to C$ of degree $\tilde{d}\geq d$
such that the induced map $\varphi_{\eta}:Y{}_{\eta}\dashrightarrow\tilde{Y}_{\eta}$
between the generic fibers of $\pi$ and $\tilde{\pi}$ is a morphism
consisting of the contraction of $\tilde{d}-d$ disjoint curves $E_{i}$
in $Y_{\eta}$ which satisfy $K_{Y_{\eta}}\cdot E_{i}=-1$, 

2) A Mori conic bundle $h:\tilde{Y}\to T$ over a certain normal surface
$h':T\to C$ projective over $C$. \\

A relative MMP over $C$ ran from a given del Pezzo fiber space $\pi_{Y}:Y\to C$
where $Y$ is a $\mathbb{Q}$-factorial threefold with terminal singularities
is in general not unique, and the isomorphism types as schemes over
$C$ of its possible outputs depend on choices made at each of the
steps. Nevertheless, imposing the condition that the generic fiber
of $\pi_{Y}$ has Picard rank one over $k(C)$ discards the possibility
that divisorial contractions of horizontal prime divisors for $\pi_{Y}:Y\to C$
occur in the process, leading to the following result: 
\begin{prop}
\label{prop:MMP-del-Pezzo-space-PicardRank1}Let $\pi:X\to C$ be
a del Pezzo fiber space whose generic fiber has Picard rank one over
$k(C)$. Then every MMP $\varphi:Y\dashrightarrow\tilde{Y}$ relative
to $C$ ran from a $\mathbb{Q}$-factorial terminalization $\tau:Y\to X$
of the normalization of $X$ terminates with a del Pezzo fibration
$\tilde{\pi}:\tilde{Y}\to C$. Moreover, the birational map $(\varphi\circ\tau^{-1})|_{V_{X/C}}:V_{X/C}\dashrightarrow\tilde{Y}$
is an open immersion of $C$-schemes whose image is contained in $V_{\tilde{Y}/C}$. 
\end{prop}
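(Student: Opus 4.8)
The plan is to run the relative MMP step by step, keeping track of the generic fibre of the del Pezzo fibre space and of the open subsets $V_{\bullet/C}$ of good fibres, the hypothesis $\rho(X_{\eta}/k(C))=1$ being the key constraint. I would first record the starting situation: since $V_{X/C}\subseteq X_{\mathrm{reg}}$, the normalization of $X$ is an isomorphism over $V_{X/C}$ and, a smooth open subscheme being $\mathbb{Q}$-factorial with terminal singularities, so is the $\mathbb{Q}$-factorial terminalization, so that $\tau$ restricts to an isomorphism $\tau^{-1}(V_{X/C})\xrightarrow{\sim}V_{X/C}$. This open subscheme of $Y$ is $\pi_{Y}$-saturated with integral smooth fibres, hence is contained in $V_{Y/C}$; in particular $Y_{\eta}\cong X_{\eta}$ has Picard rank one over $k(C)$, and $\pi_{Y}\colon Y\to C$ is again a del Pezzo fibre space of the same degree whose total space is $\mathbb{Q}$-factorial with terminal singularities, so a relative MMP as recalled above can be run from it and terminates with an output of type (1) or (2).

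Next I would prove by induction on the step $\ell$ that $Y_{\ell,\eta}\cong X_{\eta}$ (hence has Picard rank one), that $\delta_{Y_{\ell}/C}\subseteq\delta_{Y_{\ell-1}/C}$, and that $\varphi_{\ell}$ restricts to an open immersion $V_{Y_{\ell-1}/C}\hookrightarrow V_{Y_{\ell}/C}$. The crucial observation is that no $\varphi_{\ell}$ can be a divisorial contraction of a horizontal prime divisor $D$: such a $D$ would meet $Y_{\ell-1,\eta}$ in a nonzero effective divisor entirely contracted by the induced birational morphism of smooth projective surfaces $Y_{\ell-1,\eta}\to Y_{\ell,\eta}$, whereas on a Picard-rank-one surface every nonzero effective divisor is a positive rational multiple of an ample class, so has positive self-intersection and cannot be contracted. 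Hence every divisorial contraction in the run contracts a prime divisor lying in a reducible, therefore non-integral, closed fibre, and every flip has its flipping and flipped curves contained in closed fibres and moreover in $\mathrm{Sing}(Y_{\ell-1})$ and $\mathrm{Sing}(Y_{\ell})$ respectively, hence in non-regular fibres (a flipping curve of a terminal threefold meets the non-Gorenstein locus, \cite{KM98}). It follows that $\varphi_{\ell}$ is an isomorphism over a neighbourhood of the generic point of $C$, so $Y_{\ell,\eta}\cong Y_{\ell-1,\eta}$; that its exceptional or flipping locus in $Y_{\ell-1}$ is disjoint from $V_{Y_{\ell-1}/C}$, so $\varphi_{\ell}$ restricts there to an isomorphism onto an open subset of $Y_{\ell}$; and, since $Y_{\ell}\to C$ and $Y_{\ell-1}\to C$ have the same fibres over $C\setminus\delta_{Y_{\ell-1}/C}$, that $\delta_{Y_{\ell}/C}\subseteq\delta_{Y_{\ell-1}/C}$ and hence that this open subset is contained in $V_{Y_{\ell}/C}$.

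Then I would conclude. The output $\tilde{\pi}\colon\tilde{Y}\to C$ is of type (1) or (2), and (2) is excluded because the induced morphism $\tilde{Y}_{\eta}\to T_{\eta}$ would be a surjection of the surface $\tilde{Y}_{\eta}$, of Picard rank one by the induction, onto the positive-dimensional curve $T_{\eta}$, while a fibration on a Picard-rank-one surface would again provide a nonzero effective divisor, a fibre, of self-intersection zero which is a positive multiple of an ample class --- a contradiction. Hence $\tilde{\pi}$ is a del Pezzo fibration. For the last assertion I would compose the isomorphism $\tau^{-1}|_{V_{X/C}}\colon V_{X/C}\xrightarrow{\sim}\tau^{-1}(V_{X/C})\subseteq V_{Y/C}$ with the chain of open immersions $V_{Y/C}\dashrightarrow V_{Y_{1}/C}\dashrightarrow\cdots\dashrightarrow V_{\tilde{Y}/C}$ coming from the induction (applying $\varphi_{\ell}$ to the image of the previous open subset at each stage), which exhibits $(\varphi\circ\tau^{-1})|_{V_{X/C}}$ as an open immersion of $C$-schemes with image inside $V_{\tilde{Y}/C}$.

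The conceptual work is done cheaply by the Picard-rank-one hypothesis, which simultaneously forbids horizontal divisorial contractions along the run and rules out the Mori conic bundle alternative at the end. The part I expect to require the most care is the bookkeeping of the open sets $V_{\bullet/C}$ through the MMP --- in particular verifying that at every step the contracted, flipping and flipped loci sit over points of $\delta$ --- which in the flip case ultimately relies on the standard structural fact that flipping loci of terminal threefolds lie in the singular (non-Gorenstein) locus, so that any fibre meeting one of them is automatically degenerate.
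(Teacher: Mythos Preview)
Your argument is correct and follows essentially the same route as the paper's proof: both use the Picard-rank-one hypothesis on $X_{\eta}$ to exclude horizontal divisorial contractions and the conic bundle output, and both track $V_{\bullet/C}$ through the MMP via the standard fact that flipping curves on terminal threefolds meet the singular locus (the paper cites \cite[14.5.4]{CKM88}). The only cosmetic differences are that you carry out the step-by-step induction on $Y_{\ell,\eta}$ and $\delta_{Y_{\ell}/C}$ explicitly, whereas the paper argues more tersely that $\varphi_{\eta}\circ\tau_{\eta}^{-1}\colon X_{\eta}\to\tilde{Y}_{\eta}$ is forced to be an isomorphism and then checks the open-immersion property by the same induction; note also that flipping curves \emph{meet} rather than lie inside $\mathrm{Sing}(Y_{\ell-1})$, which is what you actually use.
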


\begin{proof}
The induced morphism $\pi_{Y}=\pi\circ\tau:Y\to C$ is flat and projective
and since $\tau$ restricts to an isomorphism over the regular locus
of $X$, hence over $V_{X/C}$, it follows that the induced rational
map $\tau^{-1}|_{V_{X/C}}:V_{X/C}\dashrightarrow Y$ is an open immersion
of $C$-schemes whose image is contained in $V_{Y/C}$. In particular,
$\pi_{Y}:Y\to C$ is a del Pezzo fiber space whose generic fiber $Y_{\eta}$
is isomorphic to that $X_{\eta}$ of $\pi$. Since $X_{\eta}$ has
Picard rank one over $k(C)$, $\varphi_{\eta}\circ\tau_{\eta}^{-1}:X{}_{\eta}\to\tilde{Y}_{\eta}$
is necessarily an isomorphism. This implies that $\tilde{\pi}:\tilde{Y}\to C$
cannot factor through a Mori conic bundle and hence, that $\tilde{\pi}:\tilde{Y}\to C$
is a del Pezzo fibration of the same degree as that of the del Pezzo
fiber space $\pi:X\to C$. Since $\tau^{-1}|_{V_{X/C}}:V_{X/C}\to Y$
is an open immersion with image contained in $V_{Y/C}$, the second
assertion follows from the fact that $\varphi$ induces an open immersion
of $C$-schemes $V_{Y/C}\hookrightarrow\tilde{Y}$, whose image is
contained in $V_{\tilde{Y}/C}$. Indeed, by the previous observation,
the only prime divisors in $Y$ that can be contracted by $\varphi$
are irreducible components of reducible closed fibers of $\pi_{Y}$.
In particular, the restriction of $\varphi$ to $V_{Y/C}$ does not
contract any divisor, hence consists at worse of a sequence of flips.
But since $V_{Y/C}$ is contained in $Y_{\mathrm{reg}}$ and flipping
curves must pass through singular points \cite[14.5.4]{CKM88}, we
conclude by induction that each step $\varphi_{\ell}:Y_{\ell-1}\dashrightarrow Y_{\ell}$,
$\ell=1,\ldots,n$, restricts to an open immersion of $C$-schemes
$V_{Y_{\ell-1}/C}\hookrightarrow V_{Y_{\ell}/C}$, hence that $\varphi$
induces an open immersion of $C$-schemes $V_{Y/C}\hookrightarrow\tilde{Y}$. 
\end{proof}

\subsection{Special del Pezzo pencils on $\mathbb{Q}$-Fano threefolds of divisor
class rank one }

We now introduce a class of pencils of del Pezzo surfaces $\psi:X\dashrightarrow\mathbb{P}^{1}$
on suitable projective threefolds $X$ for which we can ascertain
that the output of any relative MMP ran from any thrifty resolution
$\tau:Y\to X$ of $\psi$ is a del Pezzo fibration $\tilde{\pi}:\tilde{Y}\to\mathbb{P}^{1}$
whose general closed fibers are isomorphic to general members of $\psi$. 
\begin{defn}
\label{def:Special-del-Pezzo-pencil} An\emph{ $H$-special del Pezzo
pencil} is a triple $(X,H,\psi)$ consisting of a projective threefold
$X$ of divisor class rank one with terminal singularities, an effective
prime divisor $H$ on $X$ such that $\mathrm{Cl}(X)=\mathbb{Z}[H]$
and a pencil of divisors $\psi:X\dashrightarrow\mathbb{P}^{1}$ which
satisfies the following properties:

a) $\psi$ has a member which is a del Pezzo surface,

b) $mH$ is a member of $\psi$ for some integer $m\geq1$, 

c) The base scheme $\mathrm{Bs}\psi$ of $\psi$ is irreducible,

d) If $m=1$ then $\mathrm{Bs}\psi$ is reduced. 
\end{defn}

\begin{rem}
\label{rem:DelPezzo-Pencil-Consequences}Note that a projective threefold
$X$ of divisor class rank one is automatically $\mathbb{Q}$-factorial
as the image of the natural inclusion $\mathrm{Pic}(X)\hookrightarrow\mathrm{Cl}(X)$
is a nontrivial subgroup of finite index. The existence of an $H$-special
del Pezzo pencil $\psi:X\dasharrow\mathbb{P}^{1}$ implies the following
additional properties: 

1) First, the hypotheses that $X$ has terminal singularities and
that $\psi$ has a regular member imply by a known index one cover
argument \cite[Lemma 5.3 (2) and Proposition 6.7 (2)]{Kol92} that
every member of $\psi$ is a Cartier divisor on $X$. 

2) Second, by the adjunction formula for a general member of $\psi$,
$X$ is necessarily a Fano threefold of Fano index strictly bigger
than the integer $m$ for which $mH$ is a member of $\psi$. 
\end{rem}

Given an $H$-special del Pezzo pencil $(X,H,\psi)$ with graph resolution
$\gamma:\Gamma\to X$, Proposition \ref{prop:Graph-resolution-properties}
b) and Lemma \ref{lem:delPezzo-fibrebundle-properties} imply that
$\mathrm{q=\psi\circ\gamma:\Gamma\to\mathbb{P}^{1}}$ is a del Pezzo
fiber space. The following proposition provides a measure of how much
$\gamma:\Gamma\to X$ and $\mathrm{q}:\Gamma\to\mathbb{P}^{1}$ differ
in general from a thrifty resolution of $\psi$ and a del Pezzo fibration,
respectively. 
\begin{prop}
\label{prop:Graph-Special-delPezzo-pencil}Let $(X,H,\psi)$ be an
$H$-special del Pezzo pencil and let $\gamma:\Gamma\to X$ be the
graph resolution of $\psi$. Then the following hold: 

a) Every member of $\psi$ other than $mH$ is a prime divisor; consequently,
every fiber of $\mathrm{q}:\Gamma\to\mathbb{P}^{1}$ other than $\mathrm{q}^{-1}(q(\gamma_{*}^{-1}H))$
is integral. 

b) The variety $\Gamma$ is $\mathbb{Q}$-factorial with class group
generated by the classes of the exceptional locus $E_{\Gamma}$ of
$\gamma$ and of the proper transform $\gamma_{*}^{-1}H$ of $H$. 

c) The open subset $\Gamma\setminus(E_{\Gamma}\cap\gamma_{*}^{-1}H)$
has terminal singularities and the following additional properties
hold:

$\quad$ (i) If $m\geq2$ then $\Gamma\setminus\gamma_{*}^{-1}H$
is regular in a neighborhood of $E_{\Gamma}\setminus(E_{\Gamma}\cap\gamma_{*}^{-1}H)$,

$\quad$ (ii) If $m=1$ then $\Gamma$ has terminal singularities. 
\end{prop}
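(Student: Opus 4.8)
The plan is to analyze the graph resolution $\gamma:\Gamma\to X$ of the $H$-special del Pezzo pencil $(X,H,\psi)$ member by member, using that by Remark \ref{rem:DelPezzo-Pencil-Consequences}(1) every member of $\psi$ is a Cartier divisor on $X$. For part a), I would argue as follows. Write $mH$ and $D'$ for two members generating $\psi$, where $D'\sim mH$ is Cartier and has no common component with $mH$. Any member $D_c$ with $c\ne q(\gamma_*^{-1}H)$ is linearly equivalent to $mH$ and hence, under the identification $\mathrm{Cl}(X)=\mathbb{Z}[H]$, is of the form $D_c\equiv mH$; since $\psi$ has a del Pezzo (hence irreducible and reduced, being smooth) member among these, and all members in the pencil other than the one containing $H$ have the same class, I would show that the presence of a reducible member other than $mH$ would force $H$ to appear as a component of it, contradicting that its class is $m[H]$ with $H$ prime and $\mathrm{Bs}\psi$ irreducible. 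More precisely: if $D_c=A_1+A_2$ with $A_i$ effective nonzero, then $[A_i]=a_i[H]$ with $a_1+a_2=m$, $a_i\geq1$; but then $A_1\cap A_2\subseteq\mathrm{Bs}\psi$ would have to be contained in the irreducible base locus, and comparing with the smooth del Pezzo member (which is reduced and whose restriction to $\mathrm{Bs}\psi$ is the reduced base curve by Proposition \ref{prop:Graph-resolution-properties}) I would derive a contradiction with irreducibility/reducedness assumptions (c),(d). The statement about fibers of $\mathrm{q}$ then follows from Proposition \ref{prop:Graph-resolution-properties}(b), which identifies $\Gamma_c$ with the proper transform of $D_c$ and $\gamma_c$ with an isomorphism of pairs onto $(D_c,(\mathrm{Bs}\psi)_{\mathrm{red}})$; an integral member has integral proper transform.

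For part b), the $\mathbb{Q}$-factoriality and the computation of $\mathrm{Cl}(\Gamma)$ I would get by combining: $X$ has divisor class rank one with $\mathrm{Cl}(X)=\mathbb{Z}[H]$, and $\gamma$ is a birational morphism whose exceptional locus $E_\Gamma$ is, by Proposition \ref{prop:Graph-resolution-properties}(a), a divisor (it is a section of $\mathrm{q}$ over each base point, swept into a prime exceptional divisor). The standard exact sequence relating class groups under a birational morphism, together with $\mathrm{codim}_X(X\setminus U_\psi)\geq2$ and irreducibility of $\mathrm{Bs}\psi$ (so $E_\Gamma$ is prime), gives that $\mathrm{Cl}(\Gamma)$ is generated by $\gamma_*^{-1}H$ and $E_\Gamma$; $\mathbb{Q}$-factoriality of $\Gamma$ then follows because $\Gamma$ has divisor class rank at most two and Picard rank at least two (pulling back the ample generator from $X$ and the fiber class from $\mathbb{P}^1$ gives two independent classes), forcing the natural inclusion $\mathrm{Pic}(\Gamma)\otimes\mathbb{Q}\hookrightarrow\mathrm{Cl}(\Gamma)\otimes\mathbb{Q}$ to be an isomorphism, exactly as in the parenthetical argument of Remark \ref{rem:DelPezzo-Pencil-Consequences}.

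Part c) is where the real work lies, and I expect it to be the main obstacle. The idea is local near the base curve $\mathrm{Bs}\psi$, since $\gamma$ is an isomorphism over $X\setminus\mathrm{Bs}\psi$ where $X$ already has terminal singularities. Choose a smooth member $S_0$ (the del Pezzo member) of $\psi$; it is Cartier and passes through $\mathrm{Bs}\psi$, and $\gamma_*^{-1}S_0\cong S_0$ maps isomorphically onto $S_0$ by Proposition \ref{prop:Graph-resolution-properties}(b). Work analytically-locally at a point $x\in\mathrm{Bs}\psi$: the pencil is generated there by two functions $f,g$ cutting out $S_0$ and $mH$ respectively, with $\{f=g=0\}=\mathrm{Bs}\psi$ the irreducible base curve. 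The graph $\Gamma$ near $\gamma^{-1}(x)$ is the blow-up of the ideal $(f,g)$; along the section lying over $S_0$ (where $f$ is the pulled-back local equation of the \emph{Cartier} divisor $S_0$) the blow-up chart is governed by $g/f$, and since $S_0$ is smooth and meets $\mathrm{Bs}\psi$ cleanly one sees $\Gamma$ is smooth there; the possible singularities are concentrated on the section lying over $mH$, i.e. inside $\gamma_*^{-1}H$, hence inside $E_\Gamma\cap\gamma_*^{-1}H$. This gives the terminality of $\Gamma\setminus(E_\Gamma\cap\gamma_*^{-1}H)$ and, when $m\geq2$ so that $H$ itself appears with multiplicity $m$ (making $f$ vanish to order $m$ along $H$), the sharper regularity in (i) away from $\gamma_*^{-1}H$. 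For (ii), when $m=1$, $H$ is itself a member, so $f$ and $g$ are \emph{both} local equations of Cartier divisors and $\mathrm{Bs}\psi$ is reduced by hypothesis (d); then $\mathrm{Bs}\psi$ is a local complete intersection curve in the terminal threefold $X$, its blow-up $\Gamma$ has at worst the singularities of $X$ pulled back plus those coming from singular points of the curve $\mathrm{Bs}\psi$ inside $X_{\mathrm{reg}}$, and a standard discrepancy computation (blow-up of a reduced lci curve through a terminal, in particular smooth in codimension $\leq2$, point contributes positive discrepancies) shows $\Gamma$ is terminal everywhere. The delicate point throughout is controlling the local structure of the blow-up of $(f,g)$ when $X$ is merely terminal rather than smooth at $x$, which is handled by the index-one cover reduction already invoked in Remark \ref{rem:DelPezzo-Pencil-Consequences}(1): passing to the cyclic cover that makes $K_X$ Cartier, both members become Cartier and one is reduced to the smooth local model, then descends.
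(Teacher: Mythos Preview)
Your overall architecture for b) and the local blow-up picture in c) are reasonable, but there are two genuine gaps.

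\textbf{Part a) for $m\geq 2$.} Your first claim, that a reducible member $D_c\neq mH$ would have to contain $H$ as a component, is simply false: having class $m[H]$ in $\mathrm{Cl}(X)=\mathbb{Z}[H]$ says nothing about which prime divisors occur. Your fallback sketch (``$A_1\cap A_2\subseteq\mathrm{Bs}\psi$ \ldots\ derive a contradiction'') is not an argument: you never explain what the contradiction is, and condition (d) on reducedness is unavailable when $m\geq 2$. What the paper actually does is a local computation you are missing. Since the smooth del Pezzo member $D$ is Cartier and regular at each $x\in\mathrm{Bs}\psi$, the local ring $\mathcal{O}_{X,x}$ is \emph{regular} and the local equation $f$ of $D$ lies in $\mathfrak m\setminus\mathfrak m^2$; since $m\geq 2$ the local equation $h$ of $mH$ lies in $\mathfrak m^2$, so every other member has local equation $f+th\in\mathfrak m\setminus\mathfrak m^2$ and is therefore regular at $x$. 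Combined with the observation (which you do have, implicitly) that every irreducible component of $D_c$, being ample, meets $D$ inside $\mathrm{Supp}(\mathrm{Bs}\psi)$ and hence passes through $x$, regularity of $D_c$ at $x$ forces it to be prime. This same local regularity is what drives c)(i); without it your blow-up analysis is incomplete.

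\textbf{Regularity of $X$ along $\mathrm{Bs}\psi$, and c)(ii).} You treat the case ``$X$ merely terminal at $x\in\mathrm{Bs}\psi$'' as the delicate point and propose an index-one cover reduction. This is unnecessary and reflects a missed observation: a smooth Cartier member through $x$ forces $\mathcal{O}_{X,x}$ to be regular, so the entire analysis of $\Gamma$ near $E_\Gamma$ takes place over the smooth locus of $X$. Once you know this, your phrase ``standard discrepancy computation'' for $m=1$ is too vague to count as a proof. The paper instead passes to the formal completion $\hat{\mathcal O}_{X,x}\cong k[[\hat f,y,z]]$, uses reducedness of $\mathrm{Bs}\psi$ (hypothesis (d)) to write the second generator as $p_0(y,z)+\hat f\cdot r$ with $p_0$ squarefree, and reads off that the Rees algebra is $k[[\hat f,y,z]][u,v]/(\hat f v-p_0(y,z)u)$, a $cA_{n-1}$ compound Du Val singularity with $n=\mathrm{mult}_{(0,0)}p_0$. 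That explicit identification is what certifies terminality; you should supply it rather than appeal to an unspecified discrepancy calculation. (Also, in your c)(i) sketch you write ``$f$ vanishes to order $m$ along $H$''; it is $g$, the local equation of $mH$, that has high order, not $f$.)
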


\begin{proof}
Let $D$ be a regular member of $\psi$, let $x\in\mathrm{Bs}\psi$
be a closed point and let $\mathfrak{m}$ be the maximal ideal of
$\mathcal{O}_{X,x}$. Since by Remark \ref{rem:DelPezzo-Pencil-Consequences}
1), the members of $\psi$ are Cartier, the ideals of $D$ and $mH$
in $\mathcal{O}_{X,x}$ are principal, say respectively generated
by elements $f,h\in\mathfrak{m}.$ Moreover, since $D$ is regular
at $x$, $\mathcal{O}_{X,x}/(f)$ is a regular ring, which implies
that $\mathcal{O}_{X,x}$ is regular and that $f\in\mathfrak{m}\setminus\mathfrak{m}^{2}$.
If $m=1$ then since $H$ generates $\mathrm{Cl}(X)$, it follows
that every member of $\psi$ is a prime divisor. Now assume that $m\geq2$.
Then $h\in\mathfrak{m}^{2}$ and the ideal $I_{D',x}$ in $\mathcal{O}_{X,x}$
of a member $D'$ of $\psi$ other than $mH$ is generated by an element
of the form $f'=f+th$ for some $t\in k$, which thus belongs to $\mathfrak{m}\setminus\mathfrak{m}^{2}$.
Since every irreducible component $D_{i}'$ of $D'$ is $\mathbb{Q}$-Cartier
and ample, the support of $D_{i}'\cap D$ is a closed subset of pure
codimension $2$ of $X$ contained in $\mathrm{Supp}(\mathrm{Bs}\psi)$,
hence equal to it since $\mathrm{Bs}\psi$ is irreducible. It follows
that the ideal $I_{D'_{i},x}$ in $\mathcal{O}_{X,x}$ of each irreducible
component $D_{i}'$ is contained in $\mathfrak{m}$, hence, since
$I_{D',x}$ is contained in $\mathfrak{m}$ but not in $\mathfrak{m}^{2}$,
that $D'$ is a prime divisor, regular at $x$. Assertion a) then
follows from Proposition \ref{prop:Graph-resolution-properties} b)
which asserts that $\mathrm{q}^{-1}(\mathrm{q}(\gamma_{*}^{-1}D'))=\gamma_{*}^{-1}D'$
is isomorphic to $D'$ and is a prime Cartier divisor. 

Since $\mathrm{Bs}\psi$ is the scheme-theoretic intersection of any
two general members of $\psi$ and the latter are Cartier by Remark
\ref{rem:DelPezzo-Pencil-Consequences} 1), the ideal sheaf $\mathcal{I}_{\mathrm{Bs}\psi}\subset\mathcal{O}_{X}$
of $\mathrm{Bs}\psi$ is locally generated by a regular sequence of
length $2$. This implies, see e.g. \cite[Proposition 7.1.2 and \S 7.1.3]{Dol12},
that $\gamma:\Gamma\to X$ equals the blow-up of $X$ along $\mathrm{Bs}\psi$,
hence that $E_{\Gamma}=(\gamma^{-1}(\mathrm{Bs}\psi))_{\mathrm{red}}$
is a prime divisor which is the support of a Cartier divisor on $\Gamma$.
On the other hand, since $\mathrm{Bs}\psi$ has codimension $2$ in
$X$, the restriction homomorphism $\mathrm{\mathrm{Cl}(X)\to\mathrm{Cl}(X\setminus\mathrm{Bs}\psi)}\cong\mathrm{Cl}(\Gamma\setminus E_{\Gamma})$
is an isomorphism and hence, since $\mathrm{Cl}(X)=\mathbb{Z}[H]$,
the class group of $\Gamma$ is generated by $E_{\Gamma}$ and $\gamma_{*}^{-1}H$.
Assertion b) then follows from the observation that for a general
member $D$ of $\psi$, one has $m\gamma_{*}^{-1}H\sim\gamma_{*}^{-1}D=\mathrm{q}^{-1}(q(\gamma_{*}^{-1}D))$
which is a prime Cartier divisor on $\Gamma$. 

To prove Assertion c), we first note that $\Gamma\setminus E_{\Gamma}\cong X\setminus\mathrm{Bs}\psi$
has terminal singularities by assumption. If $m\geq2$, then we established
above in passing that every member $D'$ of $\psi$ other than $mH$
is a prime divisor regular at every point $x$ of $\mathrm{Bs}\psi$.
Since $\mathrm{q}^{-1}(\mathrm{q}(\gamma_{*}^{-1}D'))=\gamma_{*}^{-1}D'\cong D'$
is a prime Cartier divisor on $\Gamma$ which is regular at every
point of $\mathrm{q}^{-1}(\mathrm{q}(\gamma_{*}^{-1}D'))\cap E_{\Gamma}$,
it follows that $\Gamma$ is regular in a neighborhood of $\mathrm{q}^{-1}(\mathrm{q}(\gamma_{*}^{-1}D'))\cap E_{\Gamma}$,
and hence that $\Gamma\setminus\gamma_{*}^{-1}H$ is regular in a
neighborhood of $E_{\Gamma}\setminus(E_{\Gamma}\cap\gamma_{*}^{-1}H)$.
Now assume that $m=1$ and let $D'$ be a regular member of $\psi$
other than $D$. For every closed point $x\in\mathrm{Bs}\psi$, the
ideal of $D'$ in $\mathcal{O}_{X,x}$ is principal, generated by
an element $f'\in\mathfrak{m}\setminus\mathfrak{m}^{2}$ and the ideal
$\mathcal{I}_{\mathrm{Bs}\psi,x}$ of $\mathrm{Bs}\psi$ in $\mathcal{O}_{X,x}$
is generated by $f$ and $f'$. Since $\mathcal{O}_{X,x}$ and $\mathcal{O}_{X,x}/(f)$
are regular, there exist elements $g_{1},g_{2}\in\mathfrak{m}$ such
that the residue classes of $f,g_{1},g_{2}$ modulo $\mathfrak{m}^{2}$
form a basis of $\mathfrak{m}/\mathfrak{m}^{2}$. The formal completion
$\hat{\mathcal{O}}_{X,x}$ of $\mathcal{O}_{X,x}$ with respect to
$\mathfrak{m}$ is then isomorphic to the formal power series ring
$k[[\hat{f},y,z]]$, generated by the respective images of $f$, $g_{1}$
and $g_{2}$. Since $\mathrm{Bs}\psi$ is reduced, the image of $f'$
in $\hat{\mathcal{O}}_{X,x}$ is, up to multiplication by an invertible
element, of the form $\hat{f}'=p_{0}(y,z)+\hat{f}r(\hat{f},y,z)$
where $p_{0}$ is a nonconstant polynomial without square factors
and $r$ is a formal power series. The image in $\hat{\mathcal{O}}_{X,x}$
of $\mathcal{I}_{\mathrm{Bs}\psi,x}$ equals the ideal $\hat{I}=(\hat{f},p_{0}(y,z)+\hat{f}r(\hat{f},y,z))=(\hat{f},p_{0}(y,z))$
whose associated Rees algebra $\bigoplus_{m\geq0}\hat{I}^{m}$ is
isomorphic to $k[[\hat{f},y,z]][u,v]/(\hat{f}v-p_{0}(y,z)u)$. Letting
$n=\mathrm{mult}_{(0,0)}p_{0}$, we conclude that if $n=1$ then $\Gamma$
is regular along $\gamma^{-1}(x)$. Otherwise, if $n\geq2$, then
$\Gamma$ has a unique compound Du Val singularity of type $cA_{n-1}$,
in particular a terminal singularity, supported on $\gamma^{-1}(x)$.
Noting that $\mathrm{Bs}\psi$ is generically regular because it is
reduced and that, in the description above, $\mathrm{mult}_{(0,0)}p_{0}>1$
if and only if $x$ is a singular point of $\mathrm{Bs}\psi$ yields
the conclusion that $\Gamma$ has at worse finitely many cDV singularities
supported on $E_{\Gamma}$, which completes the proof of Assertion
c) (ii).
\end{proof}
\begin{cor}
\label{cor:Thrifty-to-graph}Let $(X,H,\psi)$ be an $H$-special
del Pezzo pencil, let $\gamma:\Gamma\to X$ be the graph resolution
of $\psi$, let $\tau:Y\to X$ be a thrifty resolution of $\psi$
and let $\sigma:Y\to\Gamma$ be the induced morphism. Then the following
hold: 

a) If $m=1$ then $\sigma$ is an isomorphism and $\mathrm{q}\circ\sigma:\Gamma\to\mathbb{P}^{1}$
is a del Pezzo fibration whose closed fibers are isomorphic to the
members of $\psi$. 

b) If $m\geq2$ then $\sigma$ restricts to an isomorphism over $\Gamma\setminus\gamma_{*}^{-1}H$. 
\end{cor}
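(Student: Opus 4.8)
The plan is to treat the cases $m=1$ and $m\ge 2$ separately, reducing each to the structural description of $\Gamma$ supplied by Proposition~\ref{prop:Graph-Special-delPezzo-pencil} together with the recalled fact that a $\mathbb{Q}$-factorial terminalization restricts to an isomorphism over every $\mathbb{Q}$-factorial open subset with terminal singularities.

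For a), I would first observe that when $m=1$ the threefold $\Gamma$ is itself $\mathbb{Q}$-factorial with terminal singularities by Proposition~\ref{prop:Graph-Special-delPezzo-pencil} b) and c)(ii), hence normal and equal to its own normalization; since the morphism $\sigma$ is, by definition of a thrifty resolution, a $\mathbb{Q}$-factorial terminalization of the normalization of $\Gamma$, it is a $\mathbb{Q}$-factorial terminalization of $\Gamma$ itself, and applying the quoted property with the whole variety as the distinguished open subset shows that $\sigma\colon Y\to\Gamma$ is an isomorphism. There remains to check that $\mathrm{q}\colon\Gamma\to\mathbb{P}^{1}$ is a del Pezzo fibration. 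It is a del Pezzo fiber space with closed fibers isomorphic to the members of $\psi$ by Proposition~\ref{prop:Graph-resolution-properties} b) and Lemma~\ref{lem:delPezzo-fibrebundle-properties}, and $\Gamma$ is $\mathbb{Q}$-factorial terminal, so only the relative Picard rank and the relative ampleness of $-K_{\Gamma}$ need to be addressed. For the former I would use that $\mathrm{Cl}(\Gamma)$ is generated by $E_{\Gamma}$ and $\gamma_{*}^{-1}H$ (Proposition~\ref{prop:Graph-Special-delPezzo-pencil} b)) and that, since $m=1$, the divisor $\gamma_{*}^{-1}H$ is the proper transform of the member $H$ of $\psi$, hence a fiber of $\mathrm{q}$ by Proposition~\ref{prop:Graph-resolution-properties} b); thus $\mathrm{Cl}(\Gamma/\mathbb{P}^{1})$ is generated by the class of the horizontal divisor $E_{\Gamma}$, giving relative Picard rank one. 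For the latter, since $\Gamma$ is projective the relative cone $\overline{NE}(\Gamma/\mathbb{P}^{1})$ contains no line, hence is a single ray spanned by the class of a curve $\ell$ in a general closed fiber $\Gamma_{c}\cong D_{c}$; as $D_{c}$ is a del Pezzo surface and $-K_{\Gamma}|_{\Gamma_{c}}=-K_{D_{c}}$ by adjunction along the Cartier fiber $\Gamma_{c}$, one gets $-K_{\Gamma}\cdot\ell>0$, so $-K_{\Gamma}$ is $\mathrm{q}$-ample.

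For b), with $m\ge 2$, the key point is that $\Gamma\setminus\gamma_{*}^{-1}H$ is $\mathbb{Q}$-factorial with terminal singularities: it is $\mathbb{Q}$-factorial as an open subset of the $\mathbb{Q}$-factorial threefold $\Gamma$, it is isomorphic to the terminal open subset $X\setminus\mathrm{Bs}\psi$ of $X$ away from $E_{\Gamma}$, and it is regular along $E_{\Gamma}$ by Proposition~\ref{prop:Graph-Special-delPezzo-pencil} c)(i). Being terminal it is normal, so the normalization $\nu\colon\bar{\Gamma}\to\Gamma$ restricts to an isomorphism over it and $\nu^{-1}(\Gamma\setminus\gamma_{*}^{-1}H)$ is a $\mathbb{Q}$-factorial open subset of $\bar{\Gamma}$ with terminal singularities. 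Factoring $\sigma=\nu\circ\sigma'$, with $\sigma'\colon Y\to\bar{\Gamma}$ the $\mathbb{Q}$-factorial terminalization furnished by the definition of a thrifty resolution (the factorization exists because $Y$ is normal), the quoted property shows that $\sigma'$, and hence $\sigma$, restricts to an isomorphism over $\Gamma\setminus\gamma_{*}^{-1}H$.

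The bookkeeping with $\mathbb{Q}$-factorial terminalizations and normalizations is routine; the step I expect to require the most care is the verification in a) that $\mathrm{q}\colon\Gamma\to\mathbb{P}^{1}$ is genuinely a del Pezzo fibration — pinning down that the relative Picard rank equals one and that $-K_{\Gamma}$ is $\mathrm{q}$-ample — since this is where the hypotheses that $\mathrm{Bs}\psi$ is irreducible and that $mH$ is a member of $\psi$ enter beyond what Proposition~\ref{prop:Graph-Special-delPezzo-pencil} already records.
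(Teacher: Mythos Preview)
Your proof is correct and follows essentially the same route as the paper's. The only notable tactical difference is in part a), where the paper establishes $\mathrm{q}$-ampleness of $-K_{\Gamma}$ by first showing that $E_{\Gamma}$ is $\mathrm{q}$-ample (because $\gamma$ carries $(\Gamma_{c},E_{\Gamma}\cap\Gamma_{c})$ isomorphically onto $(D_{c},\mathrm{Bs}\,\psi)$ with $\mathrm{Bs}\,\psi$ ample on $D_{c}$) and then writes $-K_{\Gamma}\sim aE_{\Gamma}+b\gamma_{*}^{-1}H$ with $a>0$, whereas you argue directly via Kleiman's criterion on the one-dimensional relative cone; both arguments are valid and rest on the same inputs from Proposition~\ref{prop:Graph-Special-delPezzo-pencil}.
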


\begin{proof}
If $m=1$ then by Proposition \ref{prop:Graph-Special-delPezzo-pencil}
b) and c) (ii), $\Gamma$ is $\mathbb{Q}$-factorial and has terminal
singularities, which implies that $\sigma:Y\to\Gamma$ is an isomorphism.
Moreover, since $m=1$, $\mathrm{q:}\Gamma\to\mathbb{P}^{1}$ has
integral closed fibers, the exceptional locus $E_{\Gamma}$ of $\gamma$
is a prime Cartier divisor and $\mathrm{Pic}(\Gamma)$ is generated
by the classes of $E_{\Gamma}$ and $\gamma_{*}^{-1}H$. Since for
every closed point $c\in\mathbb{P}^{1}$, $\gamma$ maps the pair
$(\Gamma_{c},E_{\Gamma}\cap\Gamma_{c})$ isomorphically onto the pair
$(\gamma(\Gamma_{c}),\mathrm{Bs}\psi)$ on which $\mathrm{Bs}\psi$
is ample, it follows that $E_{\Gamma}$ is $\mathrm{q}$-ample. Writing
$-K_{\Gamma}\sim aE_{\Gamma}+b\gamma_{*}^{-1}H$, we have $a>0$ because
a general closed fiber of $\mathrm{q}$ is a del Pezzo surface, which
implies that $-K_{\Gamma}$ is $\mathrm{q}$-ample and completes the
proof of Assertion a). Assertion b) immediately follows from Proposition
\ref{prop:Graph-Special-delPezzo-pencil} c) (i). 
\end{proof}
\begin{cor}
\label{cor:delPezzo-completion-from-pencil} Let $(X,H,\psi)$ be
an $H$-special del Pezzo pencil. Then every relative MMP $\varphi:Y\dashrightarrow\tilde{Y}$
over $\mathbb{P}^{1}$ ran from a thrifty resolution $\tau:Y\to X$
of $\psi$ terminates with a del Pezzo fibration $\tilde{\pi}:\tilde{Y}\to\mathbb{P}^{1}$.
Moreover, the induced map 
\[
(\varphi\circ\sigma^{-1})|_{V_{\Gamma/\mathbb{P}^{1}}}:(V_{\Gamma/\mathbb{P}^{1}},E_{\Gamma}|_{V_{\Gamma/\mathbb{P}^{1}}})\dashrightarrow(\tilde{Y},\varphi_{*}\sigma_{*}^{-1}E_{\Gamma}),
\]
where $\sigma:Y\to\Gamma$ is the induced morphism, is an open immersion
of pairs over $\mathbb{P}^{1}$. 
\end{cor}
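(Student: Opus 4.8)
The plan is to reduce the statement to Proposition~\ref{prop:MMP-del-Pezzo-space-PicardRank1} applied to the del Pezzo fiber space $\mathrm{q}=\psi\circ\gamma\colon\Gamma\to\mathbb{P}^{1}$ in place of $\pi\colon X\to C$. By the Example following Lemma~\ref{lem:delPezzo-fibrebundle-properties}, this $\mathrm{q}$ is a del Pezzo fiber space of degree $d$, and by the very definition of a thrifty resolution the induced morphism $\sigma\colon Y\to\Gamma$ is a $\mathbb{Q}$-factorial terminalization of the normalization of $\Gamma$. Thus the only hypothesis of Proposition~\ref{prop:MMP-del-Pezzo-space-PicardRank1} left to verify is that the generic fiber $\Gamma_{\eta}$ of $\mathrm{q}$ has Picard rank one over $k(\mathbb{P}^{1})$.

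For this I would use Proposition~\ref{prop:Graph-Special-delPezzo-pencil}~b): $\Gamma$ is $\mathbb{Q}$-factorial with class group freely generated by $[E_{\Gamma}]$ and $[\gamma_{*}^{-1}H]$, and since restriction of divisor classes along the open inclusions $\mathrm{q}^{-1}(U)\subseteq\Gamma$ is surjective, the restriction $\mathrm{Cl}(\Gamma)\to\mathrm{Cl}(\Gamma_{\eta})=\mathrm{Pic}(\Gamma_{\eta})$ is surjective. Because $mH$ is a member of $\psi$, Proposition~\ref{prop:Graph-resolution-properties}~b) provides a closed point $c_{0}\in\mathbb{P}^{1}$ with $\mathrm{q}^{-1}(c_{0})=m\gamma_{*}^{-1}H$; as all closed fibers of $\mathrm{q}$ are linearly equivalent and restrict trivially to $\Gamma_{\eta}$, one obtains $m\cdot\bigl([\gamma_{*}^{-1}H]|_{\Gamma_{\eta}}\bigr)=0$ in $\mathrm{Pic}(\Gamma_{\eta})$. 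Since $\Gamma_{\eta}$ is a smooth del Pezzo surface over $k(\mathbb{P}^{1})$, its Picard group is torsion-free, being a subgroup of the Picard group of the geometric generic fiber of $\mathrm{q}$; hence $[\gamma_{*}^{-1}H]|_{\Gamma_{\eta}}=0$, so $\mathrm{Pic}(\Gamma_{\eta})$ is generated by $[E_{\Gamma}]|_{\Gamma_{\eta}}$ and therefore has rank exactly one, a del Pezzo surface having Picard rank $\geq 1$.

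Granting this, Proposition~\ref{prop:MMP-del-Pezzo-space-PicardRank1} applied to $\mathrm{q}\colon\Gamma\to\mathbb{P}^{1}$ and to the terminalization $\sigma\colon Y\to\Gamma$ yields at once that every relative MMP $\varphi\colon Y\dashrightarrow\tilde{Y}$ over $\mathbb{P}^{1}$ run from $\tau$ terminates with a del Pezzo fibration $\tilde{\pi}\colon\tilde{Y}\to\mathbb{P}^{1}$, and that $j:=(\varphi\circ\sigma^{-1})|_{V_{\Gamma/\mathbb{P}^{1}}}\colon V_{\Gamma/\mathbb{P}^{1}}\to\tilde{Y}$ is an open immersion of $\mathbb{P}^{1}$-schemes with image contained in $V_{\tilde{Y}/\mathbb{P}^{1}}$; as in the proof of that proposition, $\sigma$ here restricts to an isomorphism over $\Gamma_{\mathrm{reg}}$, hence over $V_{\Gamma/\mathbb{P}^{1}}$.

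It then remains to upgrade $j$ to an open immersion of pairs matching $E_{\Gamma}|_{V_{\Gamma/\mathbb{P}^{1}}}$ with the trace of $\varphi_{*}\sigma_{*}^{-1}E_{\Gamma}$. By Proposition~\ref{prop:Graph-resolution-properties}~a), for every closed point $x$ of the irreducible base scheme $\mathrm{Bs}\psi$ the curve $\gamma^{-1}(x)$ is a section of $\mathrm{q}$; hence the prime divisor $E_{\Gamma}$ dominates $\mathbb{P}^{1}$, and its proper transform $\sigma_{*}^{-1}E_{\Gamma}$ is a prime divisor horizontal for $\pi_{Y}=\mathrm{q}\circ\sigma$. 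By the analysis in the proof of Proposition~\ref{prop:MMP-del-Pezzo-space-PicardRank1}, the only prime divisors of $Y$ that can be contracted along $\varphi$ are components of reducible closed fibers of $\pi_{Y}$; so $\sigma_{*}^{-1}E_{\Gamma}$ is not contracted, $\varphi_{*}\sigma_{*}^{-1}E_{\Gamma}$ is a prime divisor of $\tilde{Y}$, and since $\varphi\circ\sigma^{-1}$ restricts to an isomorphism over $V_{\Gamma/\mathbb{P}^{1}}$ the proper transform $\varphi_{*}\sigma_{*}^{-1}E_{\Gamma}$ acquires no new component over the image of $j$ and satisfies $j^{-1}(\varphi_{*}\sigma_{*}^{-1}E_{\Gamma})=E_{\Gamma}|_{V_{\Gamma/\mathbb{P}^{1}}}$, which is the asserted open immersion of pairs over $\mathbb{P}^{1}$ (when $m=1$ one may instead quote Corollary~\ref{cor:Thrifty-to-graph}~a), where $\sigma$ is already an isomorphism). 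Since Proposition~\ref{prop:MMP-del-Pezzo-space-PicardRank1} carries the bulk of the MMP analysis, the step I expect to require genuine care is the computation that $\Gamma_{\eta}$ has Picard rank one: this is exactly where the defining features of an $H$-special del Pezzo pencil — the class-group description of Proposition~\ref{prop:Graph-Special-delPezzo-pencil}~b) together with the presence of the member $mH$ — enter in an essential way, whereas the remaining bookkeeping that tracks $E_{\Gamma}$ through the relative MMP is routine once one knows, again from that proof, that horizontal prime divisors are never contracted.
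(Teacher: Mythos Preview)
Your proof is correct and follows essentially the same route as the paper: split off the case $m=1$ via Corollary~\ref{cor:Thrifty-to-graph}~a), and for $m\geq 2$ use the class-group description of Proposition~\ref{prop:Graph-Special-delPezzo-pencil}~b) to deduce that $\Gamma_{\eta}$ has Picard rank one, then invoke Proposition~\ref{prop:MMP-del-Pezzo-space-PicardRank1}. The paper's proof is terser on both the Picard-rank computation and the passage from an open immersion of schemes to one of pairs, but your expanded arguments (torsion-freeness of $\mathrm{Pic}(\Gamma_{\eta})$ to kill $[\gamma_{*}^{-1}H]|_{\Gamma_{\eta}}$, and the observation that $E_{\Gamma}$ is horizontal hence never contracted) are exactly the details one would supply.
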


\begin{proof}
If $m=1$, the assertion follows from Corollary \ref{cor:Thrifty-to-graph}
a). So assume that $m\geq2$. Since $\mathrm{Cl}(\Gamma)$ is generated
by the classes of the exceptional locus $E_{\Gamma}$ of $\gamma:\Gamma\to X$
and of $\gamma_{*}^{-1}H$, the generic fiber $\Gamma_{\eta}$ of
the del Pezzo fiber space $\mathrm{q}:\Gamma\to\mathbb{P}^{1}$ has
Picard rank one and the assertion follows from Proposition \ref{prop:MMP-del-Pezzo-space-PicardRank1}. 
\end{proof}
Specializing further to $H$-special pencils for which $X\setminus H$
is regular, we obtain the following:
\begin{thm}
\label{thm:Completion}Let $(X,H,\psi)$ be an $H$-special del Pezzo
pencil. Assume that $m=1$ or that $m\geq2$ and $X\setminus H$ is
regular. Then every relative MMP $\varphi:Y\dashrightarrow\tilde{Y}$
over $\mathbb{P}^{1}$ ran from a thrifty resolution $\tau:Y\to X$
of $\psi$ terminates with a del Pezzo fibration $\tilde{\pi}:\tilde{Y}\to\mathbb{P}^{1}$
whose total space is a completion of $X\setminus H$ with boundary
divisor 
\[
B=B_{h}\cup B_{f}=\varphi_{*}\sigma_{*}^{-1}E_{\Gamma}\cup\varphi_{*}(\sigma^{-1}(\gamma_{*}^{-1}H)),
\]
where $\sigma:Y\to\Gamma$ is the induced morphism. Moreover, for
every member $D_{c}$ of $\psi$ other than $mH$, the composition
$\varphi\circ\tau{}^{-1}|_{D_{c}}:(D_{c},(\mathrm{Bs}\psi)_{\mathrm{red}})\to(\tilde{Y}_{c,}B_{h,c})$
is an isomorphism of pairs. 
\end{thm}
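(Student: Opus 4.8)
The plan is to derive Theorem~\ref{thm:Completion} from the structural results already obtained on the graph resolution $\gamma\colon\Gamma\to X$ and on thrifty resolutions, supplemented by a localization of the relative MMP over the single point $c_{0}:=\mathrm{q}(\gamma_{*}^{-1}H)\in\mathbb{P}^{1}$ whose fiber in $\Gamma$ supports the member $mH$. I will use throughout that $\mathrm{Bs}\psi\subseteq H$ as a set (it lies on every member, in particular on $mH$), so that, $\gamma$ being an isomorphism over $X\setminus\mathrm{Bs}\psi$ with exceptional divisor $E_{\Gamma}$ lying over $\mathrm{Bs}\psi$, one has $\gamma^{-1}(H)=\gamma_{*}^{-1}H\cup E_{\Gamma}$ and hence $\Gamma\setminus(\gamma_{*}^{-1}H\cup E_{\Gamma})=\gamma^{-1}(X\setminus H)\cong X\setminus H$ over $\mathbb{P}^{1}$. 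In the case $m=1$, Corollary~\ref{cor:Thrifty-to-graph}(a) already gives that $\sigma\colon Y\to\Gamma$ is an isomorphism and that $\mathrm{q}\colon\Gamma\to\mathbb{P}^{1}$ is a del Pezzo fibration; since such a fibration has relative Picard rank one and $-K$ relatively ample, it admits no divisorial contraction or flip, so the relative MMP is trivial and $\tilde{Y}=\Gamma$, $\tilde{\pi}=\mathrm{q}$, $B_{h}=E_{\Gamma}$, $B_{f}=\gamma_{*}^{-1}H$; both assertions then follow at once from the displayed identification and from Proposition~\ref{prop:Graph-resolution-properties}(b). So the substance lies in the case $m\geq2$, under the standing hypothesis that $X\setminus H$ is regular.

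The first step is to identify $V_{\Gamma/\mathbb{P}^{1}}$ precisely. Using that $\mathrm{Sing}(X)\subseteq H$, the isomorphism $\Gamma\setminus E_{\Gamma}\cong X\setminus\mathrm{Bs}\psi$, and the regularity of $\Gamma\setminus\gamma_{*}^{-1}H$ along $E_{\Gamma}\setminus(E_{\Gamma}\cap\gamma_{*}^{-1}H)$ given by Proposition~\ref{prop:Graph-Special-delPezzo-pencil}(c)(i), I would first check that $\Gamma\setminus\gamma_{*}^{-1}H$ is regular. By Proposition~\ref{prop:Graph-Special-delPezzo-pencil}(a) every fiber of $\mathrm{q}$ other than $\mathrm{q}^{-1}(c_{0})$ is integral, and being disjoint from the fiber $\mathrm{q}^{-1}(c_{0})$, whose support is $\gamma_{*}^{-1}H$, such a fiber lies in $\Gamma\setminus\gamma_{*}^{-1}H$, hence in the regular locus; therefore $\delta_{\Gamma/\mathbb{P}^{1}}=\{c_{0}\}$ and $V_{\Gamma/\mathbb{P}^{1}}=\Gamma\setminus\gamma_{*}^{-1}H$. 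By Corollary~\ref{cor:Thrifty-to-graph}(b), $\sigma$ is an isomorphism over $V_{\Gamma/\mathbb{P}^{1}}$, so $Y$ is regular with integral fibers over $\mathbb{P}^{1}\setminus\{c_{0}\}$, and its only possibly reducible fiber is $\sigma^{-1}(\gamma_{*}^{-1}H)$, the fiber of $\pi_{Y}:=\mathrm{q}\circ\sigma$ over $c_{0}$.

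The heart of the argument, and the step I expect to be the main obstacle, is to show that the relative MMP $\varphi\colon Y\dashrightarrow\tilde{Y}$ is an isomorphism over $\mathbb{P}^{1}\setminus\{c_{0}\}$. I would argue by induction on the number of steps. Since $\mathrm{Cl}(\Gamma)$ is generated by the classes of $E_{\Gamma}$ and $\gamma_{*}^{-1}H$, the generic fiber of $\pi_{Y}$ has Picard rank one, so the argument of Proposition~\ref{prop:MMP-del-Pezzo-space-PicardRank1} rules out divisorial contractions of divisors horizontal for $\pi_{Y}$; hence a divisorial contraction contracts a component of a reducible closed fiber, and the only such fiber lies over $c_{0}$, so this step is an isomorphism over $\mathbb{P}^{1}\setminus\{c_{0}\}$. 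A flipping curve must pass through a singular point of the ambient threefold, and there is none over $\mathbb{P}^{1}\setminus\{c_{0}\}$, so a flip is likewise an isomorphism there. Since a step that is an isomorphism over $\mathbb{P}^{1}\setminus\{c_{0}\}$ alters neither the singular locus nor the reducibility of the fibers over that open set, the inductive hypothesis is preserved and the claim follows.

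Granting this, the conclusion is a matter of bookkeeping. Corollary~\ref{cor:delPezzo-completion-from-pencil} already gives that $\tilde{\pi}\colon\tilde{Y}\to\mathbb{P}^{1}$ is a del Pezzo fibration and that $\varphi\circ\sigma^{-1}$ restricts to an open immersion of pairs $(\Gamma\setminus\gamma_{*}^{-1}H,\,E_{\Gamma}\setminus\gamma_{*}^{-1}H)\hookrightarrow(\tilde{Y},B_{h})$ over $\mathbb{P}^{1}$; since $\varphi$ is an isomorphism over $\mathbb{P}^{1}\setminus\{c_{0}\}$, its image is exactly $\tilde{Y}\setminus\tilde{Y}_{c_{0}}$, so deleting $E_{\Gamma}$ and invoking $\Gamma\setminus(\gamma_{*}^{-1}H\cup E_{\Gamma})\cong X\setminus H$ identifies $X\setminus H$ with $\tilde{Y}\setminus(B_{h}\cup\tilde{Y}_{c_{0}})$ over $\mathbb{P}^{1}$. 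Because neither divisorial contractions nor flips create divisors, every irreducible component of $\tilde{Y}_{c_{0}}$ is the proper transform of a component of $\sigma^{-1}(\gamma_{*}^{-1}H)$, so $\tilde{Y}_{c_{0}}$ and $B_{f}=\varphi_{*}(\sigma^{-1}(\gamma_{*}^{-1}H))$ have the same support; hence $\tilde{Y}\setminus(B_{h}\cup B_{f})\cong X\setminus H$ over $\mathbb{P}^{1}$, which is the completion statement. For the last assertion, given a member $D_{c}$ with $c\neq c_{0}$, I would compose the isomorphism of pairs $\gamma_{c}$ of Proposition~\ref{prop:Graph-resolution-properties}(b) with the restrictions of $\sigma$ and of $\varphi$ — isomorphisms near $\Gamma_{c}$, resp. over $c$, by the first two steps — to obtain the claimed isomorphism of pairs $(D_{c},(\mathrm{Bs}\psi)_{\mathrm{red}})\to(\tilde{Y}_{c},B_{h,c})$.
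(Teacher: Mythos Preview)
Your proof is correct and follows essentially the same route as the paper's: identify $V_{\Gamma/\mathbb{P}^{1}}=\Gamma\setminus\gamma_{*}^{-1}H$ via Proposition~\ref{prop:Graph-Special-delPezzo-pencil}(a),(c)(i) together with the hypothesis $\mathrm{Sing}(X)\subseteq H$, invoke Corollary~\ref{cor:delPezzo-completion-from-pencil}, and then read off the completion statement from $\Gamma\setminus(E_{\Gamma}\cup\gamma_{*}^{-1}H)\cong X\setminus H$ and the fiberwise isomorphism from Proposition~\ref{prop:Graph-resolution-properties}(b). The only real difference is your middle paragraph: you give an explicit step-by-step argument that each divisorial contraction and flip in $\varphi$ is an isomorphism over $\mathbb{P}^{1}\setminus\{c_{0}\}$, thereby identifying the image of the open immersion as $\tilde{Y}\setminus\tilde{Y}_{c_{0}}$. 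The paper instead simply asserts that the open immersion of Corollary~\ref{cor:delPezzo-completion-from-pencil} is an isomorphism onto $\tilde{Y}\setminus B_{f}$, leaving implicit the properness argument (the source is proper over $\mathbb{P}^{1}\setminus\{c_{0}\}$, so its image is both open and closed in $\tilde{\pi}^{-1}(\mathbb{P}^{1}\setminus\{c_{0}\})$). Your version is slightly redundant with the proof of Proposition~\ref{prop:MMP-del-Pezzo-space-PicardRank1}, which you effectively reprove in this special case, but it has the virtue of making that step completely transparent.
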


\begin{proof}
If $m=1$, then, by Corollary \ref{cor:Thrifty-to-graph} a), $\tilde{\pi}:\tilde{Y}\to\mathbb{P}^{1}$
is isomorphic over $\mathbb{P}^{1}$ to $\mathrm{q}:\Gamma\to\mathbb{P}^{1}$.
Otherwise, if $m\geq2$ then $V_{\Gamma/\mathbb{P}^{1}}=\Gamma\setminus\gamma_{*}^{-1}H$
by Proposition \ref{prop:Graph-Special-delPezzo-pencil} a) and c)
(i) and 
\[
\varphi\circ\sigma^{-1}:(\Gamma\setminus\gamma_{*}^{-1}H,E_{\Gamma}|_{\Gamma\setminus\gamma_{*}^{-1}H})\to(\tilde{Y}\setminus B_{f},B_{h}|_{\tilde{Y}\setminus B_{f}})
\]
is an isomorphism of pairs over $\mathbb{P}^{1}$ by Corollary \ref{cor:delPezzo-completion-from-pencil}.
In both cases, the assertion then follows from the fact that $\gamma^{-1}:X\setminus H\to\Gamma$
is an open immersion as the complement of $E_{\Gamma}\cup\gamma_{*}^{-1}H$
and Proposition \ref{prop:Graph-resolution-properties} b). 
\end{proof}

\section{Applications to the construction of completions of the affine $3$-space }

We now apply the results of the previous section to the construction
of completions of the affine space $\mathbb{A}^{3}$ into total spaces
of del Pezzo fibrations over $\mathbb{P}^{1}$. The common principle
of the constructions below is to seek for $H$-special del Pezzo pencils
$(X,H,\psi)$ on known completions of $\mathbb{A}^{3}$ into Fano
threefolds $X$ of divisor class rank one with terminal singularities
having the property that the open subset $X\setminus H$ is isomorphic
to $\mathbb{A}^{3}$ and to apply Theorem \ref{thm:Completion}. In
every degree $d\in\{1,\ldots,9\}$ other than $d=6,7$ we are able
to provide with this approach completions of $\mathbb{A}^{3}$ into
total spaces of del Pezzo fibrations of degree $d$ over $\mathbb{P}^{1}$.
We do not claim the novelty of many of the constructions and examples
listed below: most of these are similar to already existing ones disseminated
in the literature. Our main point here is rather to review and collect
these in a more uniform and accurate way within the framework of $H$-special
del Pezzo pencils, with a particular focus on the isomorphism types
of fibers of these del Pezzo fibrations and the structure of the boundary
divisors of the completions. 

\subsection{del Pezzo fibration completions from special pencils on weighted
projective spaces}

Here we collect classes of examples of del Pezzo fibration completions
of $\mathbb{A}^{3}$ of degree $d\in\{1,2,3,8,9\}$ obtained from
suitable special del Pezzo pencils on completions of $\mathbb{A}^{3}$
into weighted projective spaces. The cases $d=8,9$ are very classical,
and for $d=1,2,3$, the examples we present are essentially more precise
reinterpretations of those already constructed earlier in \cite{DK17}
by similar methods in a slightly different language. 

\subsubsection{Examples obtained from special pencils on the projective space $\mathbb{P}^{3}$ }

In the next examples, we view $\mathbb{A}^{3}$ as the complement
of a hyperplane $H\subset\mathbb{P}^{3}$. \\

$\bullet$ Let $H'\subset\mathbb{P}^{3}$ be a hyperplane other than
$H$ and let $\psi_{9}:\mathbb{P}^{3}\dashrightarrow\mathbb{P}^{1}$
be the pencil generated by $H$ and $H'$. Then $(\mathbb{P}^{3},H,\psi_{9})$
is an $H$-special del Pezzo pencil with $m=1$ which gives rise to
a completion of $\mathbb{A}^{3}=\mathbb{P}^{3}\setminus H$ into the
total space of a del Pezzo fibration $\mathrm{q}:\Gamma\to\mathbb{P}^{1}$
of degree $9$ with boundary divisor $B=B_{h}\cup B_{f}=E_{\Gamma}\cup\gamma_{*}^{-1}H$,
where the graph resolution $\gamma:\Gamma\to\mathbb{P}^{3}$ is the
blow-up of $\mathbb{P}^{3}$ along the line $L=H\cap H'$ with exceptional
divisor $E_{\Gamma}\cong\mathbb{P}(\mathcal{O}_{\mathbb{P}^{1}}(-1)\oplus\mathcal{O}_{\mathbb{P}^{1}}(-1))$
and $\mathrm{q}:\Gamma\to\mathbb{P}^{1}$ is the projective bundle
$\mathbb{P}(\mathcal{O}_{\mathbb{P}^{1}}\oplus\mathcal{O}_{\mathbb{P}^{1}}(-1)^{\oplus2})\to\mathbb{P}^{1}$.
Here, the restriction $\bar{\mathrm{q}}:B_{h}\to\mathbb{P}^{1}$ is
the trivial $\mathbb{P}^{1}$-bundle. \\

$\bullet$ Let $S\cong\mathbb{P}^{1}\times\mathbb{P}^{1}$ be any
non-singular quadric surface in $\mathbb{P}^{3}$ such that $S\cap H$
is integral and let $\psi_{8}:\mathbb{P}^{3}\dasharrow\mathbb{P}^{1}$
be the pencil generated by $S$ and $2H$. The support of the base
scheme $\mathrm{Bs}\psi_{8}$ is a non-singular rational curve $C$,
of type $(1,1)$ when viewed in $S$ and a non-singular conic when
viewed in $\mathbb{P}^{2}$, and $(\mathbb{P}^{3},H,\psi_{8})$ is
an $H$-special del Pezzo pencil with $m=2$ to which Theorem \ref{thm:Completion}
applies to yield, for any thrifty resolution $\tau:Y\to\mathbb{P}^{3}$
of $\psi_{8}$ a completion of $\mathbb{A}^{3}=\mathbb{P}^{3}\setminus H$
into the total space of a del Pezzo fibration $\tilde{\pi}:\tilde{Y}\to\mathbb{P}^{1}$
of degree $8$ with boundary divisor 
\[
B=B_{h}\cup B_{f}=\varphi_{*}\sigma_{*}^{-1}E_{\Gamma}\cup\varphi_{*}(\sigma^{-1}(\gamma_{*}^{-1}H))
\]
and such that the restriction $\bar{\pi}:B_{h}\to\mathbb{P}^{1}$
of $\tilde{\pi}$ is a $\mathbb{P}^{1}$-fibration. \\

$\bullet$ Let $C\subset H$ be any integral curve of degree $3$,
let $S\subset\mathbb{P}^{3}$ be any non-singular cubic surface such
that $S\cap H=C$ and let $\psi_{3}:\mathbb{P}^{3}\dashrightarrow\mathbb{P}^{1}$
be the pencil generated by $S$ and $3H$. The support of the base
scheme $\mathrm{Bs}\psi_{3}$ is equal to $C$ and $(\mathbb{P}^{3},H,\psi_{3})$
is an $H$-special del Pezzo pencil with $m=3$ to which Theorem \ref{thm:Completion}
applies to yield, for any thrifty resolution $\tau:Y\to\mathbb{P}^{3}$
of $\psi_{3}$ a completion of $\mathbb{A}^{3}=\mathbb{P}^{3}\setminus H$
into the total space of a del Pezzo fibration $\tilde{\pi}:\tilde{Y}\to\mathbb{P}^{1}$
of degree $3$ with boundary divisor 
\[
B=B_{h}\cup B_{f}=\varphi_{*}\sigma_{*}^{-1}E_{\Gamma}\cup\varphi_{*}(\sigma^{-1}(\gamma_{*}^{-1}H))
\]
 such that every member of $\psi_{3}$ other than $3H$ appears as
a closed fiber of $\tilde{\pi}:\tilde{Y}\to\mathbb{P}^{1}$ and such
that every closed fiber of $\bar{\pi}:B_{h}\to\mathbb{P}^{1}$ other
than $\bar{\pi}^{-1}(\tilde{\pi}(B_{f}))$ is isomorphic to $C$. 

Noting that $C$ is a prime anti-canonical divisor on $S$ and ``reversing
the order'' of the construction yields the following proposition,
which, combined with Lemma \ref{lem:Prime-anticanonical=00003Dplane-cubic-curve},
proves Theorem A and Theorem B in the case $d=3$. 
\begin{prop}
\label{prop:dP3-completion}For every pair $(S,D)$ consisting of
a del Pezzo surface of degree $3$ and a prime anti-canonical divisor
$D$ on $S$ there exists a completion of $\mathbb{A}^{3}$ into the
total space of a del Pezzo fibration $\pi:X\to\mathbb{P}^{1}$ of
degree $3$ with boundary $B=B_{h}\cup B_{f}$ such that general fibers
of the induced morphism $\bar{\pi}:B_{h}\to\mathbb{P}^{1}$ are isomorphic
to $D$ and such that the pair $(S,D)$ equals the pair $(X_{c},B_{h,c})$
for some closed point $c\in\mathbb{P}^{1}$. 
\end{prop}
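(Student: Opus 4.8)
The plan is to reverse-engineer the pencil construction described just above the proposition. Given a pair $(S,D)$ with $S$ a del Pezzo surface of degree $3$ and $D$ a prime anti-canonical divisor on $S$, I would first realize $S$ concretely as a non-singular cubic surface in $\mathbb{P}^{3}$ via its anti-canonical embedding $S\hookrightarrow\mathbb{P}^{3}=\mathbb{P}(H^{0}(S,\omega_{S}^{\vee})^{\vee})$. Under this embedding, the anti-canonical divisor $D$ on $S$ becomes the intersection $S\cap H$ with a hyperplane $H\subset\mathbb{P}^{3}$, namely the hyperplane cutting out the one-dimensional linear system $|D|$ restricted appropriately; since $D$ is a prime anti-canonical divisor, $C:=S\cap H\cong D$ is an integral plane cubic curve lying in $H\cong\mathbb{P}^{2}$. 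This is the step where I must be careful: I need $H$ to be a hyperplane for which $\mathbb{P}^{3}\setminus H\cong\mathbb{A}^{3}$, which is automatic since every hyperplane complement in $\mathbb{P}^{3}$ is $\mathbb{A}^{3}$, and I need $C=S\cap H$ to be exactly $D$ as a subscheme, which holds because $D\in|-K_S|=|\mathcal{O}_S(1)|$ is cut out by a hyperplane section.

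Next I would invoke the construction bullet immediately preceding the proposition: taking this $C\subset H$ of degree $3$ and this non-singular cubic surface $S\subset\mathbb{P}^{3}$ with $S\cap H=C$, form the pencil $\psi_{3}:\mathbb{P}^{3}\dashrightarrow\mathbb{P}^{1}$ generated by $S$ and $3H$. The text already asserts that $\mathrm{Bs}\,\psi_3$ has support $C$ and that $(\mathbb{P}^{3},H,\psi_{3})$ is an $H$-special del Pezzo pencil with $m=3$; since $X\setminus H=\mathbb{P}^{3}\setminus H\cong\mathbb{A}^{3}$ is regular, Theorem \ref{thm:Completion} applies to any thrifty resolution $\tau:Y\to\mathbb{P}^{3}$ of $\psi_{3}$ (such resolutions exist by the remarks on $\mathbb{Q}$-factorial terminalizations preceding Definition 1.5). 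This yields a del Pezzo fibration $\tilde{\pi}:\tilde{Y}\to\mathbb{P}^{1}$ of degree $3$ whose total space is a completion of $\mathbb{A}^{3}$ with boundary $B=B_h\cup B_f=\varphi_*\sigma_*^{-1}E_\Gamma\cup\varphi_*(\sigma^{-1}(\gamma_*^{-1}H))$.

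The remaining point is to identify the fibers correctly, and here I would again quote Theorem \ref{thm:Completion}: for every member $D_c$ of $\psi_3$ other than $3H$, the composition $\varphi\circ\tau^{-1}|_{D_c}:(D_c,(\mathrm{Bs}\,\psi_3)_{\mathrm{red}})\to(\tilde{Y}_c,B_{h,c})$ is an isomorphism of pairs. Since $S$ itself is a member of $\psi_3$ distinct from $3H$, there is a closed point $c_0\in\mathbb{P}^{1}$ with $D_{c_0}=S$ and $(\mathrm{Bs}\,\psi_3)_{\mathrm{red}}=C\cong D$; thus $(\tilde{Y}_{c_0},B_{h,c_0})\cong(S,D)$, giving the last clause of the proposition with $X:=\tilde{Y}$, $\pi:=\tilde{\pi}$, $c:=c_0$. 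For the statement about general fibers of $\bar{\pi}:B_h\to\mathbb{P}^{1}$: since a general member $D_c$ of $\psi_3$ is a non-singular cubic surface by Example 1.9 (a general member is a del Pezzo surface of degree $3$) and by Proposition \ref{prop:Graph-Special-delPezzo-pencil} a) is a prime divisor, the restriction $B_{h,c}=B_h\times_{\tilde{Y}}\tilde{Y}_c$ equals (via the isomorphism of pairs above) the curve $(\mathrm{Bs}\,\psi_3)_{\mathrm{red}}=C\cong D$; hence general fibers of $\bar{\pi}$ are isomorphic to $D$. Finally, combining with Lemma \ref{lem:Prime-anticanonical=00003Dplane-cubic-curve}, which guarantees that every integral plane cubic curve arises as the support of a prime anti-canonical divisor on some degree-$3$ del Pezzo surface and that every such support is an integral plane cubic, gives Theorem A and Theorem B in the case $d=3$. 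The only genuinely delicate step is the first one — ensuring that the anti-canonical model of $S$ sits inside $\mathbb{P}^{3}$ in such a way that $D$ is precisely a hyperplane section, so that the hypotheses of the preceding construction bullet are literally met — but this is standard for cubic surfaces since $-K_S$ is very ample of degree $3$ and $|{-}K_S|$ embeds $S$ as a cubic hypersurface with hyperplane sections exactly the members of $|{-}K_S|$.
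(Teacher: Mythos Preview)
Your proposal is correct and follows essentially the same approach as the paper: anti-canonically embed $S$ as a cubic hypersurface in $\mathbb{P}^{3}$, take the unique hyperplane $H$ with $S\cap H=D$, form the pencil $\psi_{3}=\langle S,3H\rangle$, and apply Theorem~\ref{thm:Completion} to the $H$-special del Pezzo pencil $(\mathbb{P}^{3},H,\psi_{3})$. You spell out in more detail than the paper does why the general fibers of $\bar{\pi}$ are isomorphic to $D$ and why $(S,D)$ appears as a pair $(X_{c},B_{h,c})$, but the argument is the same.
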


\begin{proof}
Every del Pezzo surface $S$ of degree $3$ is anti-canonically embedded
in $\mathbb{P}^{3}=\mathbb{P}(H^{0}(S,\omega_{S}^{\vee}))$ as a cubic
hypersurface. Thus, for every prime anti-canonical divisor $D$ on
$S$, there exists a unique hyperplane $H\subset\mathbb{P}^{3}$ such
that $H\cap S=D$. Applying Theorem \ref{thm:Completion} to the $H$-special
del Pezzo pencil $(\mathbb{P}^{3},H,\psi_{3})$ where $\psi_{3}$
is, as above, the pencil generated by $S$ and $3H$, yields a completion
of $\mathbb{A}^{3}=\mathbb{P}^{3}\setminus H$ with the desired properties. 
\end{proof}

\subsubsection{Examples obtained from special pencils on the weighted projective
space $\mathbb{P}(1,1,1,2)$ }

Here we consider $\mathbb{A}^{3}$ as the complement of the support
of any Weil divisor $H\cong\mathbb{P}(1,1,2)$ in the $2$-dimensional
complete linear system $|\mathcal{O}_{\mathbb{P}(1,1,1,2)}(1)|$ on
the weighted projective space $\mathbb{P}(1,1,1,2)$. The anti-canonical
model $\mathrm{Proj}(\bigoplus_{m\geq0}H^{0}(S,(\omega_{S}^{\vee})^{\otimes m}))$
of every del Pezzo surface $S$ of degree $2$ is isomorphic to a
smooth quartic hypersurface in $\mathbb{P}(1,1,1,2)$ and conversely,
a smooth quartic hypersurface $S$ of $\mathbb{P}(1,1,1,2)$ is a
del Pezzo surface of degree $2$ such that the restriction homomorphism
\[
H^{0}(\mathbb{P}(1,1,1,2),\mathcal{O}_{\mathbb{P}(1,1,1,2)}(1))\to H^{0}(S,\omega_{S}^{\vee})
\]
is bijective. Given any pair $(S,D)$ consisting of a del Pezzo surface
$S\subset\mathbb{P}(1,1,1,2)$ of degree $2$ and a prime anti-canonical
divisor $D$ on $S$, let $\psi_{2}:\mathbb{P}(1,1,1,2)\dashrightarrow\mathbb{P}^{1}$
be the pencil generated by $S$ and $4H$ where $H\in|\mathcal{O}_{\mathbb{P}(1,1,1,2)}(1)|$
is the unique hyperplane such that $S\cap H=D$. By definition, the
support of the base scheme $\mathrm{Bs}\psi_{2}$ is the curve $D$,
which is a prime anti-canonical divisor when viewed in $S$ and an
integral quartic curve when viewed in $H\cong\mathbb{P}(1,1,2)$,
and $(\mathbb{P}(1,1,1,2),H,\psi_{2})$ is an $H$-special del Pezzo
pencil with $m=4$. By applying Theorem \ref{thm:Completion} to $(\mathbb{P}(1,1,1,2),H,\psi_{2})$
we obtain the following proposition which, together with Lemma \ref{lem:Prime-anticanonical=00003Dplane-cubic-curve},
proves Theorem A and Theorem B in the case $d=2$. 
\begin{prop}
\label{prop:dP2-completion}For every pair $(S,D)$ consisting of
a del Pezzo surface of degree $2$ and a prime anti-canonical divisor
$D$ on $S$ there exists a completion of $\mathbb{A}^{3}$ into the
total space of a del Pezzo fibration $\pi:X\to\mathbb{P}^{1}$ of
degree $2$ with boundary $B=B_{h}\cup B_{f}$ such that general fibers
of the induced morphism $\bar{\pi}:B_{h}\to\mathbb{P}^{1}$ are isomorphic
to $D$ and such that the pair $(S,D)$ equals the pair $(X_{c},B_{h,c})$
for some closed point $c\in\mathbb{P}^{1}$. 
\end{prop}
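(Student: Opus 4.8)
The strategy is to carry out, with $\mathbb{P}(1,1,1,2)$ in place of $\mathbb{P}^{3}$ and $m=4$ in place of $m=3$, the exact analogue of the argument proving Proposition~\ref{prop:dP3-completion}, and to feed the resulting $H$-special del Pezzo pencil into Theorem~\ref{thm:Completion}.

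Concretely, first I would invoke the description recalled just before the statement: the anti-canonical model $\mathrm{Proj}(\bigoplus_{m\geq0}H^{0}(S,(\omega_{S}^{\vee})^{\otimes m}))$ of a del Pezzo surface $S$ of degree $2$ presents $S$ as a smooth quartic hypersurface in $\mathbb{P}(1,1,1,2)$, for which the restriction homomorphism $H^{0}(\mathbb{P}(1,1,1,2),\mathcal{O}(1))\to H^{0}(S,\omega_{S}^{\vee})$ is bijective. Given a prime anti-canonical divisor $D$ on $S$, this bijectivity yields a unique $H\in|\mathcal{O}_{\mathbb{P}(1,1,1,2)}(1)|$ with $S\cap H=D$. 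I would then take $\psi_{2}\colon\mathbb{P}(1,1,1,2)\dashrightarrow\mathbb{P}^{1}$ to be the pencil generated by the quartic $S$ and by $4H$; these are linearly equivalent members of $|\mathcal{O}(4)|$ without common irreducible component, and the support of $\mathrm{Bs}\,\psi_{2}$ is the integral curve $D$.

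Next I would check that $(\mathbb{P}(1,1,1,2),H,\psi_{2})$ is an $H$-special del Pezzo pencil with $m=4$: $\mathbb{P}(1,1,1,2)$ carries only a terminal cyclic quotient singularity and has $\mathrm{Cl}=\mathbb{Z}[H]$; $H\cong\mathbb{P}(1,1,2)$ is an effective prime divisor; $S$ is a del Pezzo member and $4H$ is a member; $\mathrm{Bs}\,\psi_{2}$ is irreducible since its support is the irreducible curve $D$; and condition (d) of Definition~\ref{def:Special-del-Pezzo-pencil} is vacuous as $m\geq2$. I would also record that $\mathbb{P}(1,1,1,2)\setminus H\cong\mathbb{A}^{3}$ --- in coordinates for which $H=\{x_{0}=0\}$, the complement is $\mathrm{Spec}\,k[x_{1}/x_{0},x_{2}/x_{0},y/x_{0}^{2}]$ --- so that the regularity hypothesis of Theorem~\ref{thm:Completion} in the case $m\geq2$ holds.

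Finally, applying Theorem~\ref{thm:Completion} to $(\mathbb{P}(1,1,1,2),H,\psi_{2})$, every relative MMP over $\mathbb{P}^{1}$ ran from a thrifty resolution $\tau\colon Y\to\mathbb{P}(1,1,1,2)$ of $\psi_{2}$ produces a del Pezzo fibration $\pi\colon X\to\mathbb{P}^{1}$ of degree $2$ whose total space is a completion of $\mathbb{P}(1,1,1,2)\setminus H\cong\mathbb{A}^{3}$, with boundary $B=B_{h}\cup B_{f}$. By Proposition~\ref{prop:Graph-Special-delPezzo-pencil}~a) every member $D_{c}$ of $\psi_{2}$ other than $4H$ is a prime divisor, a general one being a del Pezzo surface of degree $2$ by Proposition~\ref{prop:Graph-resolution-properties}~b) and Lemma~\ref{lem:delPezzo-fibrebundle-properties}; and the ``isomorphism of pairs'' clause of Theorem~\ref{thm:Completion} identifies $(D_{c},(\mathrm{Bs}\,\psi_{2})_{\mathrm{red}})=(D_{c},D)$ with $(X_{c},B_{h,c})$ for every such member. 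Reading this off for a general member gives $B_{h,c}\cong D$, and reading it off for the member $S$ itself produces a closed point $c\in\mathbb{P}^{1}$ with $(X_{c},B_{h,c})=(S,D)$, as required. I do not expect a genuine obstacle here: the whole weight of the proof is borne by Theorem~\ref{thm:Completion} together with the standard facts about $\mathbb{P}(1,1,1,2)$ quoted above, so the only points demanding attention are the verification of the four axioms of an $H$-special del Pezzo pencil (chiefly the irreducibility of $\mathrm{Bs}\,\psi_{2}$) and the identification $\mathbb{P}(1,1,1,2)\setminus H\cong\mathbb{A}^{3}$.
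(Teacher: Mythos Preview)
Your proposal is correct and follows precisely the approach of the paper: the construction of the $H$-special del Pezzo pencil $(\mathbb{P}(1,1,1,2),H,\psi_{2})$ with $m=4$ is set up in the paragraph immediately preceding the proposition, and the paper's proof consists simply in applying Theorem~\ref{thm:Completion} to that pencil. Your write-up is in fact slightly more explicit than the paper's in spelling out the verification of the axioms of Definition~\ref{def:Special-del-Pezzo-pencil} and the identification $\mathbb{P}(1,1,1,2)\setminus H\cong\mathbb{A}^{3}$, but the argument is the same.
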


\subsubsection{Examples obtained from special pencils on the weighted projective
space $\mathbb{P}(1,1,2,3)$}

The construction is similar to that in the previous case. We consider
$\mathbb{A}^{3}$ as the complement of the support of any Weil divisor
$H\cong\mathbb{P}(1,2,3)$ in the $1$-dimensional complete linear
system $|\mathcal{O}_{\mathbb{P}(1,1,2,3)}(1)|$ on the weighted projective
space $\mathbb{P}(1,1,2,3)$. The anti-canonical model $\mathrm{Proj}(\bigoplus_{m\geq0}H^{0}(S,(\omega_{S}^{\vee})^{\otimes m}))$
of a del Pezzo surface $S$ of degree $1$ is isomorphic to a smooth
sextic hypersurface in $\mathbb{P}(1,1,2,3)$ and conversely, every
smooth sextic hypersurface $S$ of $\mathbb{P}(1,1,2,3)$ is a del
Pezzo surface of degree $1$ such that the restriction homomorphism
\[
H^{0}(\mathbb{P}(1,1,2,3),\mathcal{O}_{\mathbb{P}(1,1,2,3)}(1))\to H^{0}(S,\omega_{S}^{\vee})
\]
is bijective. Given any pair $(S,D)$ consisting of a del Pezzo surface
$S\subset\mathbb{P}(1,1,2,3)$ of degree $1$ and a prime anti-canonical
divisor $D$ on $S$, let $\psi_{1}:\mathbb{P}(1,1,2,3)\dashrightarrow\mathbb{P}^{1}$
be the pencil generated by $S$ and $6H$ where $H\in|\mathcal{O}_{\mathbb{P}(1,1,2,3)}(1)|$
is the unique hyperplane such that $S\cap H=D$. By definition, the
support of the base scheme $\mathrm{Bs}\psi_{1}$ is the curve $D$,
which is a prime anti-canonical divisor when viewed in $S$ and an
integral sextic curve when viewed in $H\cong\mathbb{P}(1,2,3)$, and
$(\mathbb{P}(1,1,2,3),H,\psi_{1})$ is an $H$-special del Pezzo pencil
with $m=6$. As in the previous paragraph, applying Theorem \ref{thm:Completion}
to $(\mathbb{P}(1,1,2,3),H,\psi_{1})$ gives the following proposition,
which, when combined with Lemma \ref{lem:Prime-anticanonical=00003Dplane-cubic-curve},
proves Theorem A and Theorem B in the case $d=1$. 
\begin{prop}
\label{prop:dP1-completion}For every pair $(S,D)$ consisting of
a del Pezzo surface of degree $1$ and a prime anti-canonical divisor
$D$ on $S$ there exists a completion of $\mathbb{A}^{3}$ into the
total space of a del Pezzo fibration $\pi:X\to\mathbb{P}^{1}$ of
degree $1$ with boundary $B=B_{h}\cup B_{f}$ such that general fibers
of the induced morphism $\bar{\pi}:B_{h}\to\mathbb{P}^{1}$ are isomorphic
to $D$ and such that the pair $(S,D)$ equals the pair $(X_{c},B_{h,c})$
for some closed point $c\in\mathbb{P}^{1}$. 
\end{prop}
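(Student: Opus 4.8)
The plan is to transcribe the argument of Propositions~\ref{prop:dP3-completion} and~\ref{prop:dP2-completion}: one realizes the prescribed pair $(S,D)$ inside the $H$-special del Pezzo pencil $(\mathbb{P}(1,1,2,3),H,\psi_{1})$ introduced just above and applies Theorem~\ref{thm:Completion}. Since a del Pezzo surface $S$ of degree $1$ coincides with its anti-canonical model, it is a smooth sextic hypersurface of $\mathbb{P}(1,1,2,3)=\mathrm{Proj}(\bigoplus_{m\geq0}H^{0}(S,(\omega_{S}^{\vee})^{\otimes m}))$, and the restriction map $H^{0}(\mathbb{P}(1,1,2,3),\mathcal{O}_{\mathbb{P}(1,1,2,3)}(1))\to H^{0}(S,\omega_{S}^{\vee})$ is bijective. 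Viewing the prime anti-canonical divisor $D$ as the zero scheme of a section of $\omega_{S}^{\vee}$, this bijection singles out a unique $H\in|\mathcal{O}_{\mathbb{P}(1,1,2,3)}(1)|$ with $H\cap S=D$, whose support is a $\mathbb{P}(1,2,3)$ with complement $\mathbb{P}(1,1,2,3)\setminus H\cong\mathbb{A}^{3}$. Let $\psi_{1}$ be the pencil generated by the sextic $S$ and by $6H$ --- these are linearly equivalent Weil divisors without common components --- so that $\mathrm{Bs}\psi_{1}$ has reduced support $S\cap H=D$.

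The second step is to check that $(\mathbb{P}(1,1,2,3),H,\psi_{1})$ is an $H$-special del Pezzo pencil with $m=6$ to which Theorem~\ref{thm:Completion} applies. Here $\mathbb{P}(1,1,2,3)$ is a well-formed weighted projective $3$-space, hence normal of divisor class rank one with $\mathrm{Cl}=\mathbb{Z}[H]$, whose only singular points are cyclic quotient singularities of types $\frac{1}{2}(1,1,1)$ and $\frac{1}{3}(1,1,2)$, both terminal. Axioms (a) and (b) of Definition~\ref{def:Special-del-Pezzo-pencil} hold since $S$ is a smooth member and $6H$ is a member, (c) holds since $\mathrm{Bs}\psi_{1}$ has irreducible support $D$, and (d) is vacuous because $m=6$. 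Finally $m=6\geq2$ and $\mathbb{P}(1,1,2,3)\setminus H\cong\mathbb{A}^{3}$ is regular, so Theorem~\ref{thm:Completion} is applicable.

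It then remains to read off the conclusion. For any thrifty resolution $\tau:Y\to\mathbb{P}(1,1,2,3)$ of $\psi_{1}$, with induced morphism $\sigma:Y\to\Gamma$, and any relative MMP $\varphi:Y\dashrightarrow\tilde{Y}$ over $\mathbb{P}^{1}$, Theorem~\ref{thm:Completion} yields a del Pezzo fibration $\pi=\tilde{\pi}:X=\tilde{Y}\to\mathbb{P}^{1}$ completing $\mathbb{A}^{3}=\mathbb{P}(1,1,2,3)\setminus H$, with boundary $B=B_{h}\cup B_{f}=\varphi_{*}\sigma_{*}^{-1}E_{\Gamma}\cup\varphi_{*}(\sigma^{-1}(\gamma_{*}^{-1}H))$. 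By Corollary~\ref{cor:delPezzo-completion-from-pencil} together with Proposition~\ref{prop:MMP-del-Pezzo-space-PicardRank1} this fibration has degree $1$: indeed $\mathrm{q}:\Gamma\to\mathbb{P}^{1}$ is a del Pezzo fiber space of degree $1$ because its fiber over the point of $\mathbb{P}^{1}$ corresponding to the member $S$ is isomorphic to $S$, and its generic fiber has Picard rank one over $k(t)$, so that the generic fiber is preserved by the MMP. Applying the last assertion of Theorem~\ref{thm:Completion} to the member $S$ of $\psi_{1}$, a prime divisor other than $6H$, produces a closed point $c\in\mathbb{P}^{1}$ with $(X_{c},B_{h,c})\cong(S,(\mathrm{Bs}\psi_{1})_{\mathrm{red}})=(S,D)$; applying it to a general member of $\psi_{1}$ gives $B_{h,c'}\cong D$ for general $c'\in\mathbb{P}^{1}$, i.e. general fibers of $\bar{\pi}:B_{h}\to\mathbb{P}^{1}$ are isomorphic to $D$.

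I do not expect any genuine obstacle: the whole argument is formal once Theorem~\ref{thm:Completion} is available, the only non-trivial inputs being the standard structural properties of $\mathbb{P}(1,1,2,3)$ used above --- that it is a terminal weighted projective space of divisor class rank one and that removing the hyperplane $H$ yields $\mathbb{A}^{3}$ --- together with the classical description of degree-$1$ del Pezzo surfaces as the smooth sextic hypersurfaces of $\mathbb{P}(1,1,2,3)$. The proof is thus a verbatim analogue of those of Propositions~\ref{prop:dP3-completion} and~\ref{prop:dP2-completion}.
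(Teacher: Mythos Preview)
Your proposal is correct and follows exactly the same approach as the paper: the paper's proof consists of the single sentence ``As in the previous paragraph, applying Theorem~\ref{thm:Completion} to $(\mathbb{P}(1,1,2,3),H,\psi_{1})$ gives the following proposition,'' relying on the setup in the paragraph immediately preceding Proposition~\ref{prop:dP1-completion}. You have simply unpacked this in full detail, verifying the axioms of Definition~\ref{def:Special-del-Pezzo-pencil} and the regularity hypothesis of Theorem~\ref{thm:Completion} explicitly.
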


\subsection{del Pezzo fibration completions from special pencils on the smooth
quadric threefold in $\mathbb{P}^{4}$}

Here we consider $\mathbb{A}^{3}$ as the complement of a singular
hyperplane section $H\cong\mathbb{P}(1,1,2)$ in a smooth quadric
threefold $Q\subset\mathbb{P}^{4}$. Recall that writing $\mathcal{O}_{Q}(1)=\mathcal{O}_{\mathbb{P}^{4}}(1)|_{Q}$,
these singular hyperplane sections are parametrized by the closed
points of the quadric hypersurface $Q^{\vee}\subset\mathbb{P}(H^{0}(Q,\mathcal{O}_{Q}(1))^{\vee})$
dual to $Q$. \\

$\bullet$ Let $H'\cong\mathbb{P}^{1}\times\mathbb{P}^{1}$ be a non-singular
hyperplane section of $Q$ such that $H\cap H'$ is integral, i.e.
a non-singular rational curve of type $(1,1)$ when viewed in $H'$,
and let $\psi_{8}:Q\dasharrow\mathbb{P}^{1}$ be the pencil generated
by $H$ and $H'$. Then $(Q,H,\psi_{8})$ is an $H$-special del Pezzo
pencil with $m=1$ which gives rise to a completion of $\mathbb{A}^{3}=Q\setminus H$
into the total space of a del Pezzo fibration $\mathrm{q}:\Gamma\to\mathbb{P}^{1}$
of degree $8$ with boundary divisor $B=B_{h}\cup B_{f}=E_{\Gamma}\cup\gamma_{*}^{-1}H$,
where the graph resolution $\gamma:\Gamma\to Q$ is the blow-up of
$Q$ along $H\cap H'$ with exceptional divisor $E_{\Gamma}\cong\mathbb{P}(\mathcal{O}_{\mathbb{P}^{1}}(-2)\oplus\mathcal{O}_{\mathbb{P}^{1}}(-2))$
and where the restriction $\bar{\mathrm{q}}:B_{h}\to\mathbb{P}^{1}$
is a $\mathbb{P}^{1}$-bundle. \\

$\bullet$ Let $S\subset Q$ be a non-singular quadric section of
$Q$ such that $H\cap S$ is integral and let $\psi_{4}:Q\dashrightarrow\mathbb{P}^{1}$
be the pencil generated by $S$ and $2H$. Then $S$ is a del Pezzo
surface of degree $4$, the base locus $\mathrm{Bs}\psi_{4}$ is supported
by $D=H\cap S$, which is an integral anti-canonical divisor on $S$
and $H$, and $(Q,H,\psi_{4})$ is an $H$-special del Pezzo pencil
with $m=2$ to which Theorem \ref{thm:Completion} applies to yield,
for any thrifty resolution $\tau:Y\to Q$ of $\psi_{4}$, a completion
of $\mathbb{A}^{3}=Q\setminus H$ into the total space of a del Pezzo
fibration $\tilde{\pi}:\tilde{Y}\to\mathbb{P}^{1}$ of degree $4$
with boundary divisor 
\[
B=B_{h}\cup B_{f}=\varphi_{*}\sigma_{*}^{-1}E_{\Gamma}\cup\varphi_{*}(\sigma^{-1}(\gamma_{*}^{-1}H))
\]
such that every member of $\psi_{4}$ other than $2H$ appears as
a closed fiber of $\tilde{\pi}:\tilde{Y}\to\mathbb{P}^{1}$ and such
that every closed fiber of $\bar{\pi}:B_{h}\to\mathbb{P}^{1}$ other
than $\bar{\pi}^{-1}(\tilde{\pi}(B_{f}))$ is isomorphic to $D$.
Building on this construction, we obtain the following proposition
which proves Theorem A and Theorem B in the case $d=4$. 
\begin{prop}
\label{prop:dP4-completion}With the notation above, the following
hold:

a) For every del Pezzo surface $S$ of degree $4$, there exists a
completion of $\mathbb{A}^{3}$ into the total space of a del Pezzo
fibration $\tilde{\pi}:\tilde{Y}\to\mathbb{P}^{1}$ of degree $4$
such that $S$ appears as a closed fiber of $\tilde{\pi}$.

b) For every integral plane cubic curve $C$, there exist a pair $(S,D)$
consisting of a smooth del Pezzo surface of degree $4$, an anti-canonical
divisor $D\cong C$ and a completion of $\mathbb{A}^{3}$ into the
total space of a del Pezzo fibration $\tilde{\pi}:\tilde{Y}\to\mathbb{P}^{1}$
of degree $4$ with boundary $B=B_{h}\cup B_{f}$ such that the pair
$(S,D)$ equals the pair $(\tilde{Y}_{c},B_{h,c})$ for some closed
point $c\in\mathbb{P}^{1}$. 
\end{prop}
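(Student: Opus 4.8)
The plan is to derive both parts from Theorem~\ref{thm:Completion} by exhibiting, in each case, a suitable $H$-special del Pezzo pencil $(Q,H,\psi_4)$ of the shape used in the second bullet point above: a smooth quadric threefold $Q\subset\mathbb{P}^4$, a singular hyperplane section $H=Q\cap H_0\cong\mathbb{P}(1,1,2)$ — equivalently $H_0\in Q^\vee$, which is what guarantees $Q\setminus H\cong\mathbb{A}^3$ — and a non-singular quadric section $S=Q\cap Q''$ for which $D:=H\cap S$ is integral, with $\psi_4$ the pencil generated by $S$ and $2H$. Once such data are produced, Theorem~\ref{thm:Completion} completes $\mathbb{A}^3=Q\setminus H$ into a del Pezzo fibration $\tilde\pi\colon\tilde Y\to\mathbb{P}^1$ of degree $4$, and its final assertion identifies $(S,D)=(D_c,(\mathrm{Bs}\psi_4)_{\mathrm{red}})$ with $(\tilde Y_c,B_{h,c})$ for the point $c\in\mathbb{P}^1$ whose member of $\psi_4$ is $S$; here $D$ is automatically an anticanonical divisor of $S$, being a hyperplane section of the anticanonically embedded surface $S\subset\mathbb{P}^4$. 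Throughout I would use the classical facts that a del Pezzo surface of degree $4$ is precisely a smooth complete intersection of two quadrics in its anticanonical embedding, that the pencil of quadrics through such a surface has exactly five singular members, each of rank $4$, and that a singular hyperplane section of a smooth quadric threefold has rank exactly $3$, hence is isomorphic to $\mathbb{P}(1,1,2)$.

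For a), I would embed the given del Pezzo surface $S$ of degree $4$ anticanonically in $\mathbb{P}^4$ and take $Q$ to be any smooth member of the pencil of quadrics through $S$, which exists since that pencil has only five singular members. It then suffices to find a hyperplane $H_0\in Q^\vee$ with $H_0\cap S$ integral and put $H=Q\cap H_0$. Such an $H_0$ exists by a dimension count: the set of hyperplanes $H_0\subset\mathbb{P}^4$ for which the anticanonical divisor $H_0\cap S\in|{-}K_S|$ is non-integral has dimension at most $2$ in $(\mathbb{P}^4)^\vee$ — the reducible members of $|{-}K_S|$ being swept out by the finitely many $2$-dimensional families $\ell+|{-}K_S-\ell|$ over $(-1)$-curves $\ell$ and $|q|\times|{-}K_S-q|$ over conic classes $q$, and $|{-}K_S|$ having no non-reduced member — whereas $Q^\vee\subset(\mathbb{P}^4)^\vee$ is an irreducible threefold and so is not contained in this set. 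Applying Theorem~\ref{thm:Completion} to the resulting $(Q,H,\psi_4)$, and noting that $S$ is a member of $\psi_4$ other than $2H$ because $S$ is integral, yields the completion with $S\cong\tilde Y_c$.

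For b), I would run the construction in the reverse direction. First realize $C$ as the complete intersection $D=Q_1'\cap Q_2'$ of two quadric surfaces inside a hyperplane $H_0\cong\mathbb{P}^3$ of $\mathbb{P}^4$; every integral plane cubic curve arises this way (an elliptic normal quartic when $C$ is smooth, and its nodal or cuspidal degenerations otherwise). Since $D$ is integral, the pencil $\langle Q_1',Q_2'\rangle$ has no member of rank $\le 2$ and cannot consist entirely of singular quadrics, so it contains a member $Q_0'$ of rank exactly $3$, a quadric cone. I would then extend $Q_0'$ to a quadric threefold $Q=\widetilde{Q_0'}+H_0\cdot L\subset\mathbb{P}^4$, where $\widetilde{Q_0'}$ is a fixed lift and $L$ a linear form: for general $L$ the symmetric $5\times5$ matrix of $Q$ is invertible, so $Q$ is smooth, while $Q\cap H_0=Q_0'$ is singular, i.e. $H_0\in Q^\vee$ and $H:=Q\cap H_0\cong\mathbb{P}(1,1,2)$. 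Finally, choosing another member $Q_1'$ of the pencil through $D$ and any quadric threefold $Q''\subset\mathbb{P}^4$ with $Q''\cap H_0=Q_1'$, the intersection $S:=Q\cap Q''$ satisfies $S\cap H_0=Q_0'\cap Q_1'=D$, and for general such $Q''$ it is a smooth del Pezzo surface of degree $4$. Theorem~\ref{thm:Completion} applied to $(Q,H,\psi_4)$ then produces the completion with $(S,D)$ identified with $(\tilde Y_c,B_{h,c})$ and $D\cong C$.

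The points I expect to require genuine care are: in a), the dimension estimate for the locus of non-integral anticanonical sections; and in b), the verification that for general $L$ and $Q''$ the threefold $Q$ is smooth and the surface $S=Q\cap Q''$ is a smooth del Pezzo surface of degree $4$ with $S\cap H_0=D$ — which amounts to checking that the pencil $\langle Q,Q''\rangle$ has five distinct rank-$4$ singular members with $Q$ not among them, together with a local computation at the singular point of $D$ ensuring that $S$ is smooth there when $C$ is nodal or cuspidal. None of these should present a serious obstruction, but the last one is where the bulk of the bookkeeping lies.
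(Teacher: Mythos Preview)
Your plan for part a) is essentially the paper's argument: embed $S$ anticanonically, pick a smooth $Q$ in the pencil through $S$, and use a dimension count to find a tangent hyperplane $H_0\in Q^\vee$ with $H_0\cap S$ integral. The paper carries out the dimension count for non-integral anticanonical divisors via a blow-down $\alpha:S\to\mathbb{P}^2$, separating the cases where $\alpha_*D$ is an integral cubic singular at a base point versus a line-plus-conic configuration, but your summary via $\ell+|{-}K_S-\ell|$ and $|q|\times|{-}K_S-q|$ reaches the same codimension-$2$ conclusion.

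For part b) you invert the order of construction: you start from $D\subset H_0\cong\mathbb{P}^3$ as a complete intersection of two quadrics, locate a rank-$3$ member $Q_0'$ of the pencil through $D$, and then build $Q$ and $Q''$ around it. The paper instead \emph{fixes} a smooth $Q$ and a singular hyperplane section $H\cong\mathbb{P}(1,1,2)$ first, realizes the given isomorphism type $C$ as an anticanonical quartic $D$ lying in the \emph{regular locus} of $H$, and then lifts $D$ to a smooth quadric section $S$ of $Q$ using surjectivity of $H^0(Q,\mathcal{O}_Q(2))\to H^0(H,\omega_H^\vee)$ together with Bertini. The decisive local step---smoothness of $S$ at the singular point $x$ of $D$---is handled by observing that $h\in\mathfrak{m}_x\setminus\mathfrak{m}_x^2$ precisely because $x\in H_{\mathrm{reg}}$, so that $s+hf\notin\mathfrak{m}_x^2$ for any $f$ with $f(x)\neq 0$. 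Your route needs the same hypothesis, namely that the vertex of $Q_0'$ is not at $D_{\mathrm{sing}}$, and this is \emph{not} automatic in your setup: if $p$ is the singular point of $D$ then there is always a member of the pencil singular exactly at $p$ (the one corresponding to the unique linear relation between $\nabla q_1(p)$ and $\nabla q_2(p)$), and you must argue that some \emph{other} rank-$3$ member exists. This amounts to showing the degree-$4$ discriminant of the pencil is not a pure fourth power supported at that member---true in the nodal and cuspidal cases, but an extra verification your sketch omits. The paper's order of construction sidesteps the issue entirely by placing $D$ inside $H_{\mathrm{reg}}$ from the outset.
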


\begin{proof}
Every del Pezzo surface $S$ of degree $4$ is anti-canonically embedded
in $\mathbb{P}^{4}=\mathbb{P}(H^{0}(S,\omega_{S}^{\vee}))$ as the
intersection of two quadric hypersurfaces. Since a general member
of the pencil of quadrics containing $S$ is non-singular, we can
assume without loss of generality that $S=Q\cap Q'$ where $Q$ and
$Q'$ are smooth quadric threefolds. Since the restriction homomorphism
$H^{0}(Q,\mathcal{O}_{Q}(1))\to H^{0}(S,\omega_{S}^{\vee})$ is an
isomorphism, for every effective anti-canonical divisor $D$ on $S$,
there exists a unique hyperplane section $H_{D}$ of $Q$ such that
$S\cap H_{D}=D$. To prove Assertion a), it thus suffices to ascertain
the existence of prime anti-canonical divisors $D$ on $S$ such that
$H_{D}$ is a singular hyperplane section of $Q$ to which the construction
described above can be applied. For this, we first observe that non-prime
anti-canonical divisors $D$ on $S$ are parametrized by the closed
points of a sub-scheme $Z$ of $\mathbb{P}(H^{0}(S,\omega_{S}^{\vee})^{\vee})=\mathbb{P}(H^{0}(Q,\mathcal{O}_{Q}(1))^{\vee})$
of codimension $2$. Namely, the image $\alpha_{*}D$ of any effective
anti-canonical divisor $D$ on $S$ by a birational morphism $\alpha:S\to\mathbb{P}^{2}$
expressing $S$ as the blow-up of $\mathbb{P}^{2}$ at five distinct
points $p_{i}$ is a cubic curve passing through all the $p_{i}$.
If $D$ is not prime but $\alpha_{*}D$ is prime, then $\alpha_{*}D$
is an integral rational cubic curve singular at one of the $p_{i}$,
and a straightforward dimension count shows that such curves $\alpha_{*}D$
 are parametrized by a sub-scheme of dimension $2$ of $\mathbb{P}(H^{0}(\mathbb{P}^{2},\mathcal{O}_{\mathbb{P}^{2}}(3))^{\vee})$.
Otherwise, if $\alpha_{*}D$ is not prime then it consists of the
union of a line $L$ and a reduced conic $C$, possibly reducible,
with the following possible configurations: 1) $L$ does not pass
through any of the $p_{i}$'s and $C$ is the unique conic passing
through all the $p_{i}$'s; 2) $L$ passes through a unique point
$p_{i}$ and $C$ passes through all the remaining points; 3) $L$
passes through $2$ distinct points $p_{i}$ and $p_{j}$ and $C$
passes through all the remaining points. The curves $\alpha_{*}D$
corresponding to the first configuration are parametrized by an open
sub-scheme of $\mathbb{P}(H^{0}(\mathbb{P}^{2},\mathcal{O}_{\mathbb{P}^{2}}(1))^{\vee})$,
those corresponding to the second configuration are parametrized by
a finite union of open sub-schemes $Z_{i}$ of closed sub-schemes
of $\mathbb{P}(H^{0}(\mathbb{P}^{2},\mathcal{O}_{\mathbb{P}^{2}}(1))^{\vee})\times\mathbb{P}(H^{0}(\mathbb{P}^{2},\mathcal{O}_{\mathbb{P}^{2}}(2))^{\vee})$
isomorphic to $\mathbb{P}^{1}\times\mathbb{P}^{1}$, and those corresponding
to the third configuration are parametrized by a finite union of closed
sub-schemes $Z'_{ij}\cong\mathbb{P}^{2}$ of $\mathbb{P}(H^{0}(\mathbb{P}^{2},\mathcal{O}_{\mathbb{P}^{2}}(2))^{\vee})$.
On the other hand, we noticed earlier that singular hyperplane sections
of $Q$ are parametrized by the points of the quadric hypersurface
$Q^{\vee}\subset\mathbb{P}(H^{0}(Q,\mathcal{O}_{Q}(1))^{\vee})$.
Therefore, any closed point of the non-empty open subset $Q^{\vee}\setminus(Q^{\vee}\cap Z)$
corresponds to a prime anti-canonical divisor $D$ on $S$ for which
the unique hyperplane section $H_{D}$ of $Q$ such that $S\cap H_{D}=D$
is singular, and Assertion a) follows.

To prove Assertion b) we fix a singular hyperplane section $H\cong\mathbb{P}(1,1,2)$
in a smooth quadric threefold $Q\subset\mathbb{P}^{4}$. Every isomorphism
type of integral plane cubic curve $C$ can be realized by an integral
quartic curve $D$ contained in the regular locus of $H$, the curve
$D$ being then an anti-canonical divisor on $H$. From the exact
sequence of sheaves $0\to\mathcal{I}_{H}(2)\to\mathcal{O}_{Q}(2)\to\mathcal{O}_{Q}(2)|_{H}\cong\omega_{H}^{\vee}\to0$
and the vanishing of $H^{1}(Q,\mathcal{I}_{H}(2))\cong H^{1}(Q,\mathcal{O}_{Q}(1))$,
we infer that the restriction homomorphism $H^{0}(Q,\mathcal{O}_{Q}(2))\to H^{0}(H,\omega_{H}^{\vee})$
is surjective with kernel isomorphic to $H^{0}(Q,\mathcal{O}_{Q}(1))$.
Since $D$ is integral, there exists a section $s\in H^{0}(Q,\mathcal{O}_{Q}(2))$
with integral zero locus $Z(s)$ in $Q$ such that $Z(s)\cap H=D$
and, letting $h\in H^{0}(Q,\mathcal{O}_{Q}(1))$ be a section with
zero locus $H$, it follows that every other section $s'\in H^{0}(Q,\mathcal{O}_{Q}(2))$
such that $Z(s')\cap H=D$ is, up to a nonzero scalar multiple, of
the form $s'=s+hf$ for some global section $f\in H^{0}(Q,\mathcal{O}_{Q}(1))$.
Since $D$ is a Cartier divisor on $Z(s')$, $Z(s')$ is regular along
$D_{\mathrm{reg}}$. Moreover, if $D$ is singular then the fact that
the support of $D$ is contained in the regular locus of $H$ guarantees
that the open subset of sections $f\in H^{0}(Q,\mathcal{O}_{Q}(1))$
such that $Z(s')$ is regular at the unique singular point $x$ of
$D$ is nonempty. Indeed, letting $\mathfrak{m}_{x}\subset\mathcal{O}_{Q,x}$
be the maximal ideal, we have $h\in H^{0}(Q,\mathcal{O}_{Q}(1)\otimes\mathfrak{m}_{x})\setminus H^{0}(Q,\mathcal{O}_{Q}(1)\otimes\mathfrak{m}_{x}^{2})$
and hence, if $s\in H^{0}(Q,\mathcal{O}_{Q}(2)\otimes\mathfrak{m}_{x}^{2})$
then $s+hf\in H^{0}(Q,\mathcal{O}_{Q}(2)\otimes\mathfrak{m}_{x})\setminus H^{0}(Q,\mathcal{O}_{Q}(2)\otimes\mathfrak{m}_{x}^{2})$
for every $f\in H^{0}(Q,\mathcal{O}_{Q}(1))$ not vanishing at $x$.
Since on the other hand, by Bertini's theorem, the zero locus of a
general section $s'=s+hf$ is regular outside the support of $D$,
we conclude that for a general choice of section $f$, the zero locus
$Z(s')$ in $Q$ of the corresponding section $s'$ is regular, hence
is a del Pezzo surface $S$ of degree $4$ such that $S\cap H=D$.
Assertion b) then follows from Theorem \ref{thm:Completion} applied
to the $H$-special del Pezzo pencil $(Q,H,\psi)$ where $\psi$ is
the pencil generated by $S$ and $2H$. 
\end{proof}
The proof of Proposition \ref{prop:dP4-completion} shows that for
a pair $(S,D)$ consisting of a del Pezzo surface $S$ of degree $4$
and a \emph{general} prime anti-canonical divisor $D$ on $S$, there
exists a completion of $\mathbb{A}^{3}$ into the total space of a
 del Pezzo fibration $\pi:X\to\mathbb{P}^{1}$ of degree $4$ with
boundary $B=B_{h}\cup B_{f}$ such that general fibers of the induced
morphism $\bar{\pi}:B_{h}\to\mathbb{P}^{1}$ are isomorphic to $D$
and such that the pair $(S,D)$ equals the pair $(X_{c},B_{h,c})$
for some closed point $c\in\mathbb{P}^{1}$. So in contrast with the
cases $d=1,2,3$ considered above, the following question remains
open:
\begin{question}
Let $(S,D)$ be a pair consisting of a del Pezzo surface of degree
$4$ and a prime anti-canonical divisor on $S$. Does there exist
a completion of $\mathbb{A}^{3}$ into the total space of a del Pezzo
fibration $\pi:X\to\mathbb{P}^{1}$ of degree $4$ with boundary $B=B_{h}\cup B_{f}$
such that general fibers of the induced morphism $\bar{\pi}:B_{h}\to\mathbb{P}^{1}$
are isomorphic to $D$ and such that the pair $(S,D)$ equals the
pair $(X_{c},B_{h,c})$ for some closed point $c\in\mathbb{P}^{1}$
? 
\end{question}

\begin{rem}
Other examples of completions of $\mathbb{A}^{3}$ into total spaces
of del Pezzo fibrations $\pi:X\to\mathbb{P}^{1}$ of degree $4$ were
already constructed in \cite[Section 2.2.]{DK18} as outputs of non-trivial
explicit relative MMP's ran from certain del Pezzo pencils of degree
$3$ over $\mathbb{P}^{1}$ with generic fiber of Picard rank two.
Let us also notice that in the construction given in the proof of
Proposition \ref{prop:dP4-completion}, we only considered prime anti-canonical
divisors on a singular hyperplane section $H\cong\mathbb{P}(1,1,2)$
whose supports are contained in the regular locus of $H$. But $H$
also carries another family of prime anti-canonical divisors $D$
whose supports are singular rational curves, either nodal or cuspidal,
passing through the singular point $p$ of $H$, namely, the images
by the minimal desingularization morphism $\beta:\mathbb{F}_{2}=\mathbb{P}(\mathcal{O}_{\mathbb{P}^{1}}\oplus\mathcal{O}_{\mathbb{P}^{1}}(-2))\to H$
of integral members of the linear system $|s_{2}+4f_{2}|$, where
$s_{2}$ and $f_{2}$ denote respectively the negative section and
a fiber of the $\mathbb{P}^{1}$-bundle structure $\mathbb{F}_{2}\to\mathbb{P}^{1}$.
For every such prime anti-canonical divisor $D$ passing through $p$,
one can argue the existence of a smooth quadric section $S$ of $Q$
such that $S\cap H=D$ and hence, that $(Q,H,\psi)$, where $\psi$
is the pencil generated by $S$ and $2H$, is an $H$-special del
Pezzo pencil which gives rise by Theorem \ref{thm:Completion} to
a completion of $\mathbb{A}^{3}$ into the total space of a del Pezzo
fibration $\tilde{\pi}:\tilde{Y}\to\mathbb{P}^{1}$ with boundary
$B=B_{h}\cup B_{f}$ such that the pair $(S,D)$ equals the pair $(X_{c},B_{h,c})$
for some closed point $c\in\mathbb{P}^{1}$. 
\end{rem}

\subsection{del Pezzo fibration completions from special pencils on the quintic
del Pezzo threefold}

Recall that a quintic del Pezzo threefold is a smooth Fano threefold
$V_{5}$ of Picard rank one, index $2$ and degree $-\frac{1}{8}(K_{V_{5}}^{3})=5$.
By \cite{Fuj81}, a threefold with these invariants is unique up to
isomorphism and isomorphic to any non-singular section of the Grassmannian
$\mathrm{Gr}(2,5)\subset\mathbb{P}^{9}$ by a linear subspace of codimension
$3$. Viewing $V_{5}$ as a closed sub-variety of $\mathbb{P}^{6}$,
every non-singular hyperplane section of $V_{5}$ is a del Pezzo surface
of degree $5$. On the other hand, by \cite[Theorem 1]{Fur86}, there
exists up to automorphisms of $V_{5}$ a unique normal hyperplane
sections $H$ of $V_{5}$ with a unique singular point $p$ of type
$A_{4}$ such that $V_{5}\setminus H$ is isomorphic to $\mathbb{A}^{3}$.
The following principle of construction was already described in \cite[Example 6.3]{DKP22}:
\\

$\bullet$ Let $S\subset V_{5}$ be a non-singular hyperplane section
of $V_{5}$ such that $D=H\cap S$ is integral and let $\psi_{5}:V_{5}\dashrightarrow\mathbb{P}^{1}$
be the pencil generated by $S$ and $H$. Then $S$ is a del Pezzo
surface of degree $5$, the base locus $\mathrm{Bs}\psi_{5}$ equals
$D$, which is an integral anti-canonical divisor on $S$ and $H$,
and $(V_{5},H,\psi_{5})$ is an $H$-special del Pezzo pencil with
$m=1$ which gives rise to a completion of $\mathbb{A}^{3}=V_{5}\setminus H$
into the total space of a del Pezzo fibration $\mathrm{q}:\Gamma\to\mathbb{P}^{1}$
of degree $5$ with boundary divisor $B=B_{h}\cup B_{f}=E_{\Gamma}\cup\gamma_{*}^{-1}H$,
where the graph resolution $\gamma:\Gamma\to V_{5}$ is the blow-up
of $V_{5}$ along $D$. Noting that a del Pezzo surface of degree
$5$ is unique up to isomorphism, say isomorphic to $\mathbb{P}^{2}$
blown-up at the four points $[0:0:1]$, $[0:1:0]$, $[1:0:0]$ and
$[1:1:1]$, the following proposition proves Theorem A and Theorem
B in the case $d=5$. 
\begin{prop}
\label{prop:dP5-completion}For every integral plane cubic curve $C$,
there exist a pair $(S,D)$ consisting of a del Pezzo surface of degree
$5$ and an anti-canonical divisor $D\cong C$ and a completion of
$\mathbb{A}^{3}$ into the total space of a del Pezzo fibration $\pi:X\to\mathbb{P}^{1}$
of degree $5$ with boundary $B=B_{h}\cup B_{f}$ such that the pair
$(S,D)$ equals the pair $(X_{c},B_{h,c})$ for some closed point
$c\in\mathbb{P}^{1}$. 
\end{prop}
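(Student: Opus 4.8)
The plan is to imitate the proof of Proposition~\ref{prop:dP4-completion} b), with the role of the singular hyperplane section $H\cong\mathbb{P}(1,1,2)$ of the quadric threefold now played by the fixed normal hyperplane section $H\subset V_5$ with unique $A_4$ point $p$ for which $V_5\setminus H\cong\mathbb{A}^3$. Since $V_5$ has index $2$, so that $\omega_{V_5}^\vee\cong\mathcal{O}_{V_5}(2)$, adjunction for the Cartier divisor $H$ on $V_5$ gives $\omega_H^\vee\cong\mathcal{O}_{V_5}(1)|_H$; in particular the curves cut out on $H$ by the hyperplane sections of $V_5$ are exactly the anti-canonical divisors of $H$. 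The first step is to realize the given integral plane cubic $C$ by a prime anti-canonical divisor $D$ on $H$ whose support is contained in the smooth locus $H_{\mathrm{reg}}=H\setminus\{p\}$. For this I would pass to the minimal resolution $\beta:\tilde{H}\to H$, which is a weak del Pezzo surface of degree $5$, hence a blow-up of $\mathbb{P}^{2}$ at four points in almost general position realizing the $A_4$-configuration, so that the members of $|-K_{\tilde{H}}|$ are the strict transforms of the plane cubics subject to the corresponding (possibly infinitely near) incidence conditions; a dimension count then shows that among these one finds, for the prescribed isomorphism type of $C$ (every $j$-invariant when $C$ is smooth, a nodal cubic whose node is at a general point, a cuspidal cubic), a prime member disjoint from the $(-2)$-curves contracted by $\beta$, which $\beta$ maps isomorphically onto a prime anti-canonical divisor $D\cong C$ of $H$ supported in $H_{\mathrm{reg}}$.

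The second step is to lift $D$ to a hyperplane section of $V_5$. From $\mathcal{I}_H\cong\mathcal{O}_{V_5}(-1)$, the vanishing of $H^1(V_5,\mathcal{O}_{V_5})$ and the exact sequence $0\to\mathcal{O}_{V_5}\to\mathcal{O}_{V_5}(1)\to\omega_H^\vee\to 0$, the restriction $H^0(V_5,\mathcal{O}_{V_5}(1))\to H^0(H,\omega_H^\vee)$ is surjective with one-dimensional kernel spanned by a section $h$ with $Z(h)=H$. Choosing $s_0\in H^0(V_5,\mathcal{O}_{V_5}(1))$ whose restriction to $H$ vanishes exactly along $D$, the members of the pencil $\psi_5$ generated by $S_0=Z(s_0)$ and $H$ are the divisors $S_\lambda=Z(s_0+\lambda h)$, $\lambda\in k$, together with $H$; since $\mathrm{Pic}(V_5)=\mathbb{Z}$ is generated by the hyperplane class, every $S_\lambda$ is integral, and the scheme-theoretic intersection $S_\lambda\cap H$ equals the reduced curve $D$ for every $\lambda$, so that $\mathrm{Bs}\psi_5=D$ is irreducible and reduced. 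It remains to show that a general $S_\lambda$ is smooth, hence a del Pezzo surface of degree $5$. By generic smoothness a general $S_\lambda$ is smooth away from $D$; along $D_{\mathrm{reg}}$ every $S_\lambda$ is smooth because $D$ is an effective Cartier divisor on the integral surface $S_\lambda$, being cut out on it by $h|_{S_\lambda}$; and at the unique singular point $x$ of $D$ in case $C$ is nodal or cuspidal, which lies in $H_{\mathrm{reg}}\subseteq(V_5)_{\mathrm{reg}}$ so that $h\notin\mathfrak{m}_{V_5,x}^2$, the images of the defining sections $s_0+\lambda h$ in $\mathfrak{m}_{V_5,x}/\mathfrak{m}_{V_5,x}^2$ lie on a line, at most one of which is zero, so $S_\lambda$ is singular at $x$ for at most one value of $\lambda$. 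Hence for general $\lambda$ the surface $S:=S_\lambda$ is a smooth hyperplane section of $V_5$ with $S\cap H=D$ integral, and $D$ is a prime anti-canonical divisor on $S$ isomorphic to $C$.

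Finally, $(V_5,H,\psi_5)$ is an $H$-special del Pezzo pencil with $m=1$: one has $\mathrm{Cl}(V_5)=\mathbb{Z}[H]$, $V_5$ is smooth hence has terminal singularities, $\psi_5$ has the del Pezzo surface $S$ among its members, $1\cdot H$ is a member, and $\mathrm{Bs}\psi_5=D$ is irreducible and reduced. Applying Theorem~\ref{thm:Completion} to $(V_5,H,\psi_5)$ then yields a completion of $\mathbb{A}^3=V_5\setminus H$ into the total space of a del Pezzo fibration $\pi:X\to\mathbb{P}^1$ of degree $5$ with boundary $B=B_h\cup B_f$; since $S$ is a member of $\psi_5$ other than $mH$, the induced map identifies $(S,(\mathrm{Bs}\psi_5)_{\mathrm{red}})=(S,D)$ with $(X_c,B_{h,c})$ for the corresponding closed point $c\in\mathbb{P}^1$, and as $D\cong C$ this is the desired completion. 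The step I expect to be the main obstacle is the first one, namely checking that \emph{every} isomorphism type of integral plane cubic --- in particular all $j$-invariants of smooth ones --- is represented in $|-K_H|$ by a prime divisor supported away from the $A_4$ point; this is, for the degree-$5$ singular del Pezzo surface $H$, the exact analogue of the corresponding claim made (without a detailed proof) in the degree-$4$ case, and once it is granted the remainder is a routine adaptation of the proof of Proposition~\ref{prop:dP4-completion}.
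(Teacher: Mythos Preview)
Your proposal is correct and follows essentially the same strategy as the paper: realize $C$ as a prime anti-canonical divisor $D\subset H_{\mathrm{reg}}$, lift via the restriction sequence to a pencil of hyperplane sections of $V_5$ with base locus $D$, check that a general member is smooth, and apply Theorem~\ref{thm:Completion} to the resulting $H$-special del Pezzo pencil with $m=1$.

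The one substantive difference lies in your acknowledged ``main obstacle'', the first step. You propose to pass to $\tilde H$ and invoke an unspecified dimension count. The paper instead makes this step fully explicit: by \cite[Proposition 15]{Fur86} the surface $H$ contains a unique line $L$ through $p$, whose proper transform $L'\subset\tilde H$ is a $(-1)$-curve meeting the chain $E_1+E_2+E_3+E_4$ of $(-2)$-curves at a point of $E_3$; one then contracts $L'\cup E_3\cup E_2\cup E_1$ via $\alpha:\tilde H\to\mathbb{P}^2$ to a point $q$, so that $E_4$ maps to a line $\ell$. The problem of finding a prime $D\in|-K_H|$ with $D\cong C$ supported in $H_{\mathrm{reg}}$ then becomes the concrete problem of finding a plane cubic of the given isomorphism type meeting $\ell$ with multiplicity $3$ at $q$ (i.e.\ having an inflection at $q$ with tangent $\ell$ when smooth), whose proper transform meets $L'\cup E_1\cup\cdots\cup E_4$ only at a single point of $L'$. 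This is immediate for every $j$-invariant and for both singular integral types, and replaces your dimension count with a direct construction. Conversely, your smoothness verification for a general $S_\lambda$ (Bertini off $D$, the Cartier argument along $D_{\mathrm{reg}}$, and the tangent-space pencil argument at the possible singular point of $D$) is more detailed than the paper's one-line ``since $D$ is contained in the regular locus of $H$, a general member of $\psi$ is regular'', and in fact supplies the justification that the paper leaves implicit.
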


\begin{proof}
We first observe that for every integral plane cubic curve $C$, there
exists a prime anti-canonical divisor $D$ on $H$ isomorphic to $C$
and whose support is contained in the regular locus of $H$. Indeed,
by \cite[Proposition 15]{Fur86}, $H$ contains a unique line $L$,
passing through the singular point $p$ of $H$, and the proper transform
$L'$ in the minimal resolution of singularities $\beta:\tilde{H}\to H$
is a $(-1)$-curve meeting the chain $E_{1}+E_{2}+E_{3}+E_{4}$ of
$(-2)$-curves forming the exceptional locus of $\beta$ normally
at a point of $E_{3}$. Let $\alpha:\tilde{H}\to\mathbb{P}^{2}$ be
the birational morphism which contracts $L'\cup E_{3}\cup E_{2}\cup E_{1}$
to a point $q$ and maps $E_{4}$ onto a line $\ell$. Every isomorphism
type of integral cubic curve, either an elliptic curve or an integral
rational curve with a double point can be realized by a curve $C$
which intersects $\ell$ with multiplicity $3$ at $q$ and whose
proper transform $\alpha_{*}^{-1}C$ in $\tilde{H}$ meets $L'\cup E_{3}\cup E_{2}\cup E_{1}\cup E_{4}$
normally at a unique point of $L'$. The image $D=\beta_{*}(\alpha_{*}^{-1}C)$
is then a prime anti-canonical divisor on $H$ contained in the regular
locus of $H$, isomorphic to $C$ and meeting $L$ normally at a single
point. Letting $\mathcal{O}_{V_{5}}(1)=\mathcal{O}_{\mathbb{P}^{6}}(1)|_{V_{5}}$,
the exact sequence of sheaves $0\to\mathcal{I}_{H}(1)\cong\mathcal{O}_{V_{5}}\to\mathcal{O}_{V_{5}}(1)\to\mathcal{O}_{V_{5}}(1)|_{H}\cong\omega_{H}^{\vee}\to0$
together with the vanishing of $H^{1}(V_{5},\mathcal{O}_{V_{5}})$
imply that for every prime anti-canonical divisor $D$ on $H$ as
above, there exists up to isomorphism a unique pencil of hyperplane
sections $\psi:V_{5}\dashrightarrow\mathbb{P}^{1}$ with base locus
$\mathrm{Bs}\psi$ equal to $D$ and containing $H$ among its members.
Since $D$ is contained in the regular locus of $H$, a general member
of $\psi$ is regular, hence is a del Pezzo surface $S$ of degree
$5$. Thus, $(V_{5},H,\psi)$ is an $H$-special del Pezzo pencil
and the assertion follows from Theorem \ref{thm:Completion}. 
\end{proof}
\begin{rem}
In the proof of Proposition \ref{prop:dP5-completion}, we only considered
prime anti-canonical divisors on a normal hyperplane section $H$
of $V_{5}$ with a unique singular point $p$ of type $A_{4}$ whose
supports are contained in the regular locus of $H$. But $H$ also
carries another family of prime anti-canonical divisors $D$ whose
supports are nodal (resp. cuspidal) rational curves passing through
$p$: keeping the notation of the proof of Proposition \ref{prop:dP5-completion},
these are the proper transforms $\beta_{*}(\alpha_{*}^{-1}C)$ of
irreducible conics in $\mathbb{P}^{2}$ intersecting $\ell$ normally
at $q$ (resp. with multiplicity $2$ at $q$). For every such prime
anti-canonical divisor $D$ passing through $p$, a general member
$S$ of the unique pencil of hyperplane sections $\psi:V_{5}\dashrightarrow\mathbb{P}^{1}$
with base locus $\mathrm{Bs\psi}=D$ is regular away from $p$ by
Bertini\textquoteright s theorem and regular at $p$ since it intersects
the unique line $L$ in $H$ through $p$ with multiplicity $1$.
Thus, $S$ is a del Pezzo surface of degree $5$ and $(V_{5},H,\psi)$
is an $H$-special del Pezzo pencil whose graph resolution $\gamma:\Gamma\to V_{5}$
is the blow-up of $V_{5}$ along $D$ and for which $\mathrm{q}:\Gamma\to\mathbb{P}^{1}$
is a del Pezzo fibration of degree $5$ with $\Gamma\setminus(E_{\Gamma}\cup\gamma_{*}^{-1}H)\cong\mathbb{A}^{3}$. 
\end{rem}

\section{Two constructions of completions into total spaces of del Pezzo fibrations
of degree $6$}

In the next subsections, we present new constructions of completions
of $\mathbb{A}^{3}$ into total spaces of del Pezzo fibrations of
degree $6$ over $\mathbb{P}^{1}$ from which we are able to infer
Theorem A and Theorem B in the remaining case $d=6$. 
\begin{notation}
Throughout this section, given a closed sub-scheme $Z$ of a normal
variety $Y$ and a birational model $\tilde{Y}$ of $Y$, we denote
by $Z_{\tilde{Y}}$ the proper transform of $Z$ in $\widetilde{Y}$.
\end{notation}

\subsection{\label{sec:deg6nonnormal}del Pezzo completions $(X,B_{h}\cup B_{f})$
of degree $6$ with induced rational fibrations $\bar{\pi}:B_{h}\to\mathbb{P}^{1}$ }

In this subsection, we construct examples of completions of $\mathbb{A}^{3}$
into total spaces of del Pezzo fibrations $\pi:X\to\mathbb{P}^{1}$
of degree $6$ with boundary $B=B_{h}\cup B_{f}$ for which a general
fiber of the induced morphism $\bar{\pi}:B_{h}\to\mathbb{P}^{1}$
is isomorphic to a nodal or cuspidal plane cubic curve. In contrast
with the constructions made in the previous sections which involve
birational maps between Fano threefolds of divisor class rank one
and del Pezzo fibrations which preserve a given open subset isomorphic
to $\mathbb{A}^{3}$, the construction below builds instead on the
existence of Sarkisov links between two del Pezzo fibrations of different
degrees which induce nontrivial affine blow-ups between pairs of affine
open subsets isomorphic to $\mathbb{A}^{3}$. Recall \cite{KZ99},
\cite[\href{https://stacks.math.columbia.edu/tag/052P}{Tag 052P}]{StackPro}
that given a triple $(C,S,V)$ consisting of an affine variety $V=\mathrm{Spec}(A)$,
a closed sub-scheme $C$ with defining ideal $I\subset A$ and a principal
Cartier divisor $S=\mathrm{div}(f)$ where $f\in I$, the \emph{affine
blow-up $\mathrm{Bl}_{C}^{S}V$ of $V$ with center at $C$ and divisor
$S$} is the affine variety defined as the spectrum of the ring $A[I/f]:=(\bigoplus_{m\geq0}I^{m}t^{m})/(1-ft)$,
and that in the case where the closed immersions $C\subset S\subset V$
are regular, $\mathrm{Bl}_{C}^{S}V$ identifies with the complement
in the blow-up $\tilde{V}\to V$ of $V$ with center at $C$ of the
proper transform of the divisor $S$.

\subsubsection{\label{sec:(2,3)-construction}del Pezzo fibrations of degree $6$
arising from conic and twisted cubic curves in the smooth quadric
threefold}

Let $Q\subset\mathbb{P}^{4}$ be a smooth quadric threefold, let $C_{i}\subset Q$,
$i=2,3$, be a pair of smooth rational curves of degree $i$ such
that $C_{2}\cap C_{3}=\emptyset$ and let $H_{2}\subset Q$ and $H_{3}\subset Q$
be respectively a hyperplane section of $Q$ containing $C_{2}$ and
the unique hyperplane section of $Q$ containing $C_{3}$. Let $h_{i}\colon Q_{i}\to Q$,
$i=2,3$ be the blow-up along $C_{i}$ with exceptional divisor $E_{i}$,
so that $Q_{2}$ (resp. $Q_{3}$) is the Fano threefold No.29 (resp.
No.26) in \cite[Table 2]{M-M81}. The linear system $|h_{2}^{*}H_{2}-E_{2}|$
defines a quadric fibration $q_{2}\colon Q_{2}\to\mathbb{P}^{1}$
whereas the linear system $|2h_{3}^{*}H_{3}-E_{3}|$ defines a birational
morphism $q_{3}\colon Q_{3}\to V_{5}$ to the smooth quintic del Pezzo
threefold $V_{5}$ with exceptional locus $(H_{3})_{Q_{3}}$. Put
$V=Q\setminus H_{2}$, $S=H_{3}\cap V$ and $C=C_{3}\cap V$ and let
$\mathrm{Bl}_{C}^{S}V$ be the affine blow-up of $V$ with center
at $C$ and divisor $S$. \\

Now let $Y$ be the blow-up of $Q$ along $C_{2}\sqcup C_{3}$ and
let $f_{i}\colon Y\to Q_{i}$ be the induced morphisms. The linear
equivalences 
\[
-K_{Y}\sim3(h_{2}\circ f_{2})^{*}(H_{3})-(E_{2})_{Y}-(E_{3})_{Y}\sim f_{2}^{*}(h_{2}^{*}H_{2}-E_{2})+f_{3}^{*}(2h_{3}^{*}H_{3}-E_{3})
\]
and the equality $(-K_{Y})^{3}=(-K_{\mathbb{Q}^{3}})^{3}-2((-K_{Q}\cdot C_{2}+C_{3})+2)=20>0$
imply that $-K_{Y}$ is nef and big, i.e., $Y$ is a smooth weak Fano
threefold of Picard rank three, and that each $(-K_{Y})$-trivial
curve is $(q_{i}\circ f_{i})$-exceptional for $i=2,3$. By \cite[Proposition 3.5(2)]{Fuk17}
and \cite[Proposition 2.1(2)]{Fuk18}, there exists a Sarkisov link 

\begin{equation}\label{eq:link}
\xymatrix{ & Y \ar[dl]_{f_2} \ar@{.>}[r]^{\chi}  & Y^+ \ar[dr]^{g} \\ Q_2 \ar[d]_{q_2} & & & X \ar[d]^{\pi} \\ \mathbb{P}^1 \ar@{=}[rrr] & & & \mathbb{P}^1 }
\end{equation} where $\chi$ is the flop of the $(-K_{Y})$-trivial curves, $\pi:X\to\mathbb{P}^{1}$
is a del Pezzo fibration of degree $6$ and $g$ is the blow-up of
$X$ along a section of $\pi$ with exceptional divisor $(H_{3})_{Y^{+}}$. 
\begin{lem}
\label{prop:non-normalA3} The open subset $X\setminus((E_{2})_{X}\cup(H_{2})_{X})$
is isomorphic to the affine blow-up $\mathrm{Bl}_{C}^{S}V$ of $V$.
\end{lem}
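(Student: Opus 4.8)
The plan is to realize both sides of the asserted isomorphism as explicit open subsets of the common resolution $Y$ of the Sarkisov link \eqref{eq:link}, and then to match them through the link. Concretely, I will first prove that $\mathrm{Bl}_C^S V$ is isomorphic to the open subset $W:=Y\setminus\big((E_2)_Y\cup(H_2)_Y\cup(H_3)_Y\big)$ by working on the $Q_2$-side of \eqref{eq:link}, and then that $g\circ\chi$ carries $W$ isomorphically onto $X\setminus\big((E_2)_X\cup(H_2)_X\big)$.

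For the description of $W$ from the left side of the link, I would argue as follows. Since $C_2\subset H_2$, the blow-up $h_2\colon Q_2\to Q$ is an isomorphism over $V=Q\setminus H_2$ and identifies $V$ with $Q_2\setminus(E_2\cup(H_2)_{Q_2})$. The curve $(C_3)_{Q_2}$ is disjoint from $E_2$ (as $C_2\cap C_3=\emptyset$) and meets the prime divisor $(H_2)_{Q_2}$ in finitely many points; a local computation of proper transforms under $f_2$ shows that the $f_2$-fibres over those points lie in $(H_2)_Y$, so that $f_2^{-1}\big(Q_2\setminus(E_2\cup(H_2)_{Q_2})\big)=Y\setminus\big((E_2)_Y\cup(H_2)_Y\big)$, and that this open subset, with the morphism induced by $f_2$, is the scheme-theoretic blow-up $\mathrm{Bl}_C V$ of $V$ along $C=C_3\cap V$. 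Inside $\mathrm{Bl}_C V$ the proper transform of $S=H_3\cap V$ is cut out by $(H_3)_Y$. Now $\mathrm{Cl}(V)=0$ because $\mathrm{Cl}(Q)=\mathbb{Z}[H_2]$, so $S$ is a principal Cartier divisor on $V$; and since $C_3$ is a smooth curve lying in the smooth locus of $H_3$, the curve $C$ is a Cartier divisor on $S$, so the closed immersions $C\subset S\subset V$ are regular. By the description of the affine blow-up recalled earlier, $\mathrm{Bl}_C^S V$ is therefore the complement in $\mathrm{Bl}_C V$ of the proper transform of $S$, that is, $\mathrm{Bl}_C^S V\cong W$.

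Next I push $W$ through the link. Using the two displayed linear equivalences for $-K_Y$, namely $f_2^{*}(h_2^{*}H_2-E_2)=(q_2\circ f_2)^{*}\mathcal{O}_{\mathbb{P}^1}(1)$ and $f_3^{*}(2h_3^{*}H_3-E_3)=(q_3\circ f_3)^{*}\mathcal{O}_{V_5}(1)$, a curve of $Y$ is $(-K_Y)$-trivial if and only if it is contracted both by $q_2\circ f_2$ and by the birational morphism $q_3\circ f_3\colon Y\to V_5$; as $\mathrm{Exc}(q_3\circ f_3)=(E_2)_Y\cup(H_3)_Y$, every flopping curve of $\chi$ lies in $(E_2)_Y\cup(H_3)_Y$, and, the flopped curves being likewise $(-K_{Y^+})$-trivial, they lie in $(E_2)_{Y^+}\cup(H_3)_{Y^+}$. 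Hence $\chi$ restricts to an isomorphism from $W$ onto $W^+:=Y^+\setminus\big((E_2)_{Y^+}\cup(H_2)_{Y^+}\cup(H_3)_{Y^+}\big)$. Since the exceptional divisor of $g\colon Y^+\to X$ is $(H_3)_{Y^+}$, which $g$ contracts onto the section $\ell$ of $\pi$, the morphism $g$ restricts to an isomorphism from $W^+$ onto $X\setminus\big((E_2)_X\cup(H_2)_X\cup\ell\big)$.

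It therefore only remains to see that $\ell\subseteq(E_2)_X\cup(H_2)_X$, and this is the step I expect to be the main obstacle, being the one place where one really uses the precise shape of the link constructed in \cite{Fuk17,Fuk18}. Since $(H_2)_{Q_2}$ is a member of the pencil $|h_2^{*}H_2-E_2|$ defining $q_2$, the divisor $(H_2)_X$ is a fibre of $\pi$; as $\ell$ is a section of $\pi$, it cannot be contained in $(H_2)_X$, so it is enough to prove $\ell\subset(E_2)_X$. For this I would analyse the surface $(H_3)_Y$: it is the proper transform of the smooth quadric $H_3$ under the blow-up of $C_2\sqcup C_3$, hence is $H_3$ blown up at the two points of $H_3\cap C_2$, a del Pezzo surface of degree $6$ that is $(q_2\circ f_2)$-horizontal. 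Tracking its proper transform $(H_3)_{Y^+}$ through the flop $\chi$ and computing its intersections with $(E_2)_{Y^+}$ and $(H_2)_{Y^+}$, one identifies the ruled structure $g|_{(H_3)_{Y^+}}\colon(H_3)_{Y^+}\to\ell$ and checks that $g$ maps $(H_3)_{Y^+}$ into the horizontal divisor $(E_2)_X$, i.e. $\ell\subset(E_2)_X$. Combining the two halves gives $\mathrm{Bl}_C^S V\cong W\cong X\setminus\big((E_2)_X\cup(H_2)_X\big)$, as claimed.
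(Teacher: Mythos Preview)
Your overall architecture matches the paper's: identify $\mathrm{Bl}_C^S V$ with the open set $W := Y \setminus ((E_2)_Y \cup (H_2)_Y \cup (H_3)_Y)$ via the $Q_2$-side of the link, and then transport $W$ through $\chi$ and $g$. The first identification is correct, though your justification via regularity of $C \subset S$ fails whenever $H_3$ is a quadric cone, since a twisted cubic on a cone necessarily passes through the vertex and is not Cartier there; the equality $\mathrm{Bl}_C^S V = \mathrm{Bl}_C V \setminus S_Y$ nevertheless holds by direct inspection because $C$ is a smooth curve in the smooth threefold $V$.

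The genuine gaps are in the transport. Your inference that the flopped curves lie in $(E_2)_{Y^+} \cup (H_3)_{Y^+}$ is not justified: on $Y$ you argued that $(-K_Y)$-trivial curves lie in $\mathrm{Exc}(q_3 \circ f_3) = (E_2)_Y \cup (H_3)_Y$, but you have established no morphism $Y^+ \to V_5$, so the word ``likewise'' carries no force, and in general a flopped curve need not lie in the proper transform of a divisor containing the flopping curve. The containment $\ell \subset (E_2)_X$ is likewise left as a plan rather than a proof. The paper bypasses both problems with a single observation you did not exploit: $W \cong \mathrm{Bl}_C^S V$ is \emph{affine}. Since the flopping curves miss $W$, $\chi|_W$ is an open immersion and $\chi(W)$ is an affine open subset of the projective variety $Y^+$; its complement is therefore of pure codimension one, which forces the flopped curves into $(E_2)_{Y^+} \cup (H_2)_{Y^+} \cup (H_3)_{Y^+}$ and gives $\chi(W) = W^+$. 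Repeating the same purity argument for the open immersion $g|_{W^+}$ shows that the complement of $g(W^+)$ in $X$ is purely divisorial, whence $\ell \subset (E_2)_X \cup (H_2)_X$ without any analysis of $(H_3)_{Y^+}$.
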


\begin{proof}
By construction, $Q_{2}\setminus(E_{2}\cup(H_{2})_{Q_{2}})\cong V$
and $U=Y\setminus((E_{2})_{Y}\cup(H_{2}\cup H_{3})_{Y})$ is isomorphic
to the affine blow-up $\mathrm{Bl}_{C}^{S}V$. Since the $(q_{3}\circ f_{3})$-exceptional
divisor is $(E_{2})_{Y}\cup(H_{3})_{Y}$ and the flopping curves of
$\chi$ are $(q_{i}\circ f_{i})$-exceptional, it follows that $\chi$
restricts to an isomorphism between $U$ and its image $\chi(U)$
and that the complement $Y^{+}\setminus\chi(U)$ is the union of $(E_{2})_{Y^{+}}\cup(H_{2}\cup H_{3})_{Y^{+}}$
and of all the flopped curves. Since $\chi(U)$ is affine, its complement
in $Y^{+}$ is of pure codimension one, which implies that $U\cong\chi(U)=Y^{+}\setminus((E_{2})_{Y^{+}}\cup(H_{2}\cup H_{3})_{Y^{+}})\cong X\setminus((E_{2})_{X}\cup(H_{2})_{X})$. 
\end{proof}
With the notation above, we observe that if the triple $(C\subset S\subset V)$
is isomorphic to a triple $(\mathbb{A}^{1}\subset\mathbb{A}^{2}\subset\mathbb{A}^{3})$
where $\mathbb{A}^{2}\subset\mathbb{A}^{3}$ is a closed immersion
as a linear subspace (recall that by the Abhyankar-Moh theorem \cite{AM75}
every closed immersion $\mathbb{A}^{1}\subset\mathbb{A}^{2}$ is equivalent
to a linear one), then $\mathrm{Bl}_{C}^{S}V$ is isomorphic to $\mathbb{A}^{3}$.
To conclude the construction, it thus remains to find suitable configurations
of curves $C_{i}$ and hyperplane sections $H_{i}$, $i=2,3$, for
which the associated triple $(C\subset S\subset V)$ is isomorphic
to a triple $(\mathbb{A}^{1}\subset\mathbb{A}^{2}\subset\mathbb{A}^{3})$
with the above property:

\vspace{0.2cm}

(1) If the hyperplane section $H_{3}$ containing $C_{3}$ is a quadric
cone, then there exists a unique line $\ell$ of the ruling of $H_{3}$
which intersects $C_{3}$ only at the singular point $p$ of $H_{3}$.
By choosing for the hyperplane section $H_{2}$ of $Q_{2}$ a quadric
cone other than $H_{3}$ with vertex on $\ell\setminus\{p\}$ we obtain
a triple $(C,S,V)=(C_{3}\setminus\{p\},H_{3}\setminus\ell,Q\setminus H_{2})$
isomorphic to $(\mathbb{A}^{1}\subset\mathbb{A}^{2}\subset\mathbb{A}^{3})$
where $\mathbb{A}^{2}\subset\mathbb{A}^{3}$ is a closed immersion
as a linear subspace. Any smooth conic $C_{2}\subset H_{2}$ disjoint
from $C_{3}$ intersects $H_{3}$ with multiplicity two at a single
point and the corresponding del Pezzo fibration $\pi:X\to\mathbb{P}^{1}$
of degree $6$ is a completion of $\mathbb{A}^{3}\cong\mathrm{Bl}_{C}^{S}V$
with boundary divisor $B=B_{h}\cup B_{f}=(E_{2})_{X}\cup(H_{2})_{X}$
for which a general fiber of the induced morphism $\bar{\pi}:B_{h}=(E_{2})_{X}\to\mathbb{P}^{1}$
is a cuspidal rational curve.

\vspace{0.2cm}

(2) If the hyperplane section $H_{3}$ containing $C_{3}$ is nonsingular,
then again, there exists a line $\ell\subset H_{3}$ tangent to $C_{3}$
at a point $p$ and then, for $H_{2}$ equal to the quadric cone with
vertex $p$, the triple $(C,S,V)=(C_{3}\setminus\{p\},H_{3}\setminus(\ell\cup\ell'),Q\setminus H_{2})$,
where $\ell'$ is the other line in $H_{3}\cong\mathbb{P}^{1}\times\mathbb{P}^{1}$
passing through $p$, is isomorphic to $(\mathbb{A}^{1}\subset\mathbb{A}^{2}\subset\mathbb{A}^{3})$
where $\mathbb{A}^{2}\subset\mathbb{A}^{3}$ is a closed immersion
as a linear subspace. Any smooth conic $C_{2}\subset H_{2}$ disjoint
from $C_{3}$ intersects $H_{3}$ in two distinct points and the corresponding
del Pezzo fibration $\pi:X\to\mathbb{P}^{1}$ of degree $6$ is a
completion of $\mathbb{A}^{3}\cong\mathrm{Bl}_{C}^{S}V$ with boundary
divisor $B=B_{h}\cup B_{f}=(E_{2})_{X}\cup(H_{2})_{X}$ for which
a general fiber of the induced morphism $\bar{\pi}:B_{h}=(E_{2})_{X}\to\mathbb{P}^{1}$
is a nodal rational curve. 

\subsubsection{Comparison with a construction of Prokhorov starting from the quintic
del Pezzo threefold}

Keeping the same notation as in the previous subsection, we now relate
through the geometry of the Sarkisov link $V_{5}\stackrel{q_{3}}{\leftarrow}Q_{3}\stackrel{h_{3}}{\rightarrow}Q$
the constructions (1) and (2) above to another example of completions
of $\mathbb{A}^{3}$ into total spaces of del Pezzo fibrations of
degree $6$ constructed by Prokhorov \cite[Theorem 6.6]{Pro16}. Letting
$H$ be a general hyperplane section of $V_{5}$, we have $q_{3}^{*}H\sim2h_{3}^{*}H_{3}-E_{3}$
and $q_{3}$ contracts $(H_{3})_{Q_{3}}$ to a line $\Upsilon\subset V_{5}$.
It is readily checked that the proper transform $(C_{2})_{V_{5}}$
of a conic $C_{2}\subset H_{2}$ disjoint from $C_{3}$ is a smooth
rational quartic curve which is bi-secant to $\Upsilon$ and that
the proper transform $(H_{2})_{V_{5}}$ of $H_{2}$ is a hyperplane
section of $V_{5}$ containing $(C_{2})_{V_{5}}\cup\Upsilon$. Conversely,
the proper transform $(C_{4})_{Q}$ of every smooth rational quartic
curve $C_{4}\subset V_{5}$ which is bi-secant to $\Upsilon$ is a
conic disjoint from $C_{3}$. By \cite[Proposition 6.3]{Pro16} the
total space $Q'$ of the blow-up $h'\colon Q'\to V_{5}$ of $V_{5}$
along such a quartic rational curve $C_{4}$ is a weak Fano threefold
and $\Upsilon_{Q'}$ is the unique $(-K_{Q'})$-trivial curve. Furthermore,
the target of the flop $\chi'\colon Q'\dashrightarrow X'$ of $\Upsilon_{Q'}$
is the total space of a del Pezzo fibration $\pi'\colon X'\to\mathbb{P}^{1}$
of degree $6$, where $X'$ is the threefold No.1 of \cite[Section 7.4]{JPR11}.
We thus have the following commutative diagram \[\xymatrix@!C{ &  &  &  & Y \ar[dl]^-{f_{3}}\ar[dr]_-{f_{2}}\ar@{=}[rr] &  & Y\ar[dl]^-{f_{2}}\ar@{.>}[r]^-{\chi} & Y^{+}\ar[dr]_-{g}\\ X'\ar[d]_-{\pi'} & Q'\ar[dr]^-{h'}\ar@{.>}[l]_-{\chi'} &  & Q_{3}\ar[dl]_-{q_{3}}\ar[dr]^-{h_{3}} &  & Q_{2}\ar[d]_-{q_{2}}\ar[dl]_-{h_{2}} &  &  & X\ar[d]^-{\pi}\\ \mathbb{P}^{1} &  & V_{5} &  & Q & \mathbb{P}^{1}\ar@{=}[rrr] &  &  & \mathbb{P}^{1}. } \]

Since $Y\dashrightarrow X$ and $Y\dashrightarrow X'$ both have small
indeterminacy loci and the divisorial part of their exceptional loci
equals $(H_{3})_{Y}$, the induced birational map $\Phi\colon X\dashrightarrow X'$
is an isomorphism in codimension one. Moreover, since $(H_{2})_{X}$
and $(H_{2})_{X'}$ are fibers of $\pi$ and $\pi'$ respectively,
$\Phi$ is a birational map of schemes over the base curve $\mathbb{P}^{1}$.
It then follows from \cite[(3.5) Proposition]{Cor95} that $\pi:X\to\mathbb{P}^{1}$
and $\pi':X'\to\mathbb{P}^{1}$ are isomorphic del Pezzo fibrations. 

In the case (1) of the construction given in subsection \ref{sec:(2,3)-construction}
above, Furushima \cite[p.120]{Fur00} provides explicit equations
of $H_{2},H_{3}$ and $C_{3}$ and shows that the hyperplane section
$(H_{2})_{V_{5}}$ has a Du Val $A_{4}$-singularity {[}\textit{ibid},
Lemma 11 (2){]}. On the other hand, in the case (2), $(H_{2})_{V_{5}}$
becomes a non-normal hyperplane section singular along a line and
whose normalization is the Hirzebruch surface $\mathbb{F}_{3}=\mathbb{P}_{\mathbb{P}^{1}}(\mathcal{O}_{\mathbb{P}^{1}}\oplus\mathcal{O}_{\mathbb{P}^{1}}(-3))$,
which is the same as the hyperplane section $R$ considered by Prokhorov
in the proof of \cite[Theorem 6.6]{Pro16}. 

\subsection{del Pezzo completions $(X,B_{h}\cup B_{f})$ of degree $6$ with
induced elliptic fibrations $\bar{\pi}:B_{h}\to\mathbb{P}^{1}$ }

For the construction, we will apply again the Sarkisov link \eqref{eq:link}
with center along a trisection of a quadric fibration over $\mathbb{P}^{1}$
to obtain a del Pezzo fibration $\pi:X\to\mathbb{P}^{1}$ of degree
$6$ whose total space is a completion of $\mathbb{A}^{3}$. But here,
in contrast with the construction given in subsection \ref{sec:deg6nonnormal},
the open subset of $X$ isomorphic to $\mathbb{A}^{3}$ will not arise
as an affine blow-up of an open subset isomorphic to $\mathbb{A}^{3}$
in the initial quadric fibration. 

\subsubsection{\label{subsec:Auxiliary-construction}Auxiliary construction: completions
of $\mathbb{A}^{3}$ into total spaces of quadric fibrations with
skewed coordinates}

To begin with, we consider a family of projective completions of $\mathbb{A}^{2}$
into quartic surfaces $Z=Z_{\alpha_{3},\alpha_{4},\alpha_{6},\alpha_{7}}$
with equations 
\begin{equation}
z_{2}^{3}z_{3}+z_{0}^{3}z_{1}+z_{2}(z_{1}^{3}+\alpha_{3}z_{1}^{2}z_{2}+\alpha_{4}z_{0}^{2}z_{1}+\alpha_{6}z_{0}z_{1}^{2}+\alpha_{7}z_{0}z_{1}z_{2})=0,\quad\alpha_{3},\alpha_{4},\alpha_{6},\alpha_{7}\in k,\label{eq:quartic-completions}
\end{equation}
in $\mathbb{P}^{3}$ with homogeneous coordinates $[z_{0}:\ldots:z_{3}]$
of type (II) in Ohta's classification \cite[Theorem 2]{Oht01}. Such
a quartic surface $Z$ contains a copy of $\mathbb{A}^{2}$ as the
open subset $\{z_{2}\neq0\}$. Blowing-up $Z$ along the curve $\{z_{1}=z_{3}=0\}\cong\mathbb{P}^{1}$
and subtracting the strict transform of $\{z_{1}=0\}$, we obtain
a cubic curve fibration $\xi=\mathrm{pr}_{\mathbb{A}^{1}}:S\to\mathbb{A}^{1}$
over $\mathbb{A}^{1}$ together with a bisection $B$ defined as the
following sub-varieties of $\mathbb{P}_{\mathbb{A}^{1}}^{2}=\mathbb{P}^{2}\times\mathbb{A}^{1}=\mathrm{Proj}_{k[x]}(k[x][z_{0},z_{1},z_{2}])$:
\begin{equation}
\begin{array}{ccl}
S & = & \{z_{2}^{3}x+z_{0}^{3}+z_{2}(z_{1}^{2}+\alpha_{3}z_{1}z_{2}+\alpha_{4}z_{0}^{2}+\alpha_{6}z_{0}z_{1}+\alpha_{7}z_{0}z_{2})=0\}\subset\mathbb{P}^{2}\times\mathbb{A}^{1},\\
B & = & \{z_{0}=0,z_{2}^{2}x+z_{1}^{2}+\alpha_{3}z_{1}z_{2}=0\}.
\end{array}\label{eq:cubic-fibration}
\end{equation}
Note that $S$ is smooth with $S\cap\{z_{2}\neq0\}$ isomorphic to
$\mathbb{A}^{2}$, that both $B$ and $S\cap\{z_{1}=0\}$ are isomorphic
to $\mathbb{A}^{1}$ and that $S\cap\{z_{0}=0\}$ is the disjoint
union of $\{z_{0}=z_{2}=0\}$ and $B$. Next we consider the closures
of the images of $\mathbb{P}_{\mathbb{A}^{1}}^{2},S,\{z_{i}=0\}$
and $S\cap\{z_{1}=0\}$ in $\mathbb{P}_{\mathbb{A}^{1}}^{3}=\mathbb{P}^{3}\times\mathbb{A}^{1}=\mathrm{Proj}_{k[x]}(k[x][u_{0},u_{1},u_{2},u_{3}])$
by the rational map 
\begin{equation}
\begin{array}{ccc}
\Phi_{B}\colon\mathbb{P}^{2}\times\mathbb{A}^{1} & \dashrightarrow & \mathbb{P}^{3}\times\mathbb{A}^{1}\\
([z_{0}:z_{1}:z_{2}],x) & \mapsto & ([z_{0}^{2}:z_{0}z_{1}:z_{0}z_{2}:z_{2}^{2}x+z_{1}^{2}+\alpha_{3}z_{1}z_{2}],x).
\end{array}\label{eq:Phi_B}
\end{equation}
defined by the $k[x]$-module $H^{0}(\mathbb{P}^{2}\times\mathbb{A}^{1},\mathcal{O}_{\mathbb{P}^{2}\times\mathbb{A}^{1}}(2)\otimes\mathcal{I}_{B})$.
It is readily checked that these varieties are described as follows:
\[
\begin{array}{ccccl}
Q^{0} & = & \overline{\Phi_{B}(\mathbb{P}_{\mathbb{A}^{1}}^{2})} & = & \{u_{2}^{2}x+u_{1}^{2}+\alpha_{3}u_{1}u_{2}-u_{0}u_{3}=0\},\\
S_{h}^{0} & = & \overline{\Phi_{B}(S)} & = & Q^{0}\cap\{u_{0}^{2}+u_{2}(\alpha_{4}u_{0}+\alpha_{6}u_{1}+\alpha_{7}u_{2})+u_{2}u_{3}=0\},\\
s^{0} & = & \overline{\Phi_{B}(\{z_{0}=0\})} & = & \{u_{0}=u_{1}=u_{2}=0\},\\
H_{i}^{0} & = & \overline{\Phi_{B}(\{z_{i}=0\})} & = & Q^{0}\cap\{u_{i}=0\}\ \ (\text{for\ }i=1,2),\\
T^{0} & = & \overline{\Phi_{B}(S\cap\{z_{1}=0\})} & = & S_{h}^{0}\cap H_{1}^{0}\cap\{u_{3}^{2}+u_{0}u_{2}x+\alpha_{4}u_{0}u_{3}+\alpha_{7}u_{2}u_{3}=0\}.
\end{array}
\]

\begin{lem}
\label{lem:linearemb} With the notation above, $Q^{0}\setminus H_{2}^{0}$
is isomorphic to $\mathbb{A}^{3}$. Moreover, there are coordinates
$(v_{1},v_{2},v_{3})$ of $Q^{0}\setminus H_{2}^{0}\cong\mathbb{A}^{3}$
such that $H_{1}^{0}\cap(Q^{0}\setminus H_{2}^{0})=\{v_{1}=0\}$,
$S_{h}^{0}\cap(Q^{0}\setminus H_{2}^{0})=\{v_{2}=0\}$, and $T^{0}\cap(Q^{0}\setminus H_{2}^{0})=\{v_{1}=v_{2}=0\}$. 
\end{lem}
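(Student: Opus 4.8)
The statement is local over the chart $\{u_{2}\neq0\}$ of $\mathbb{P}^{3}\times\mathbb{A}^{1}$, since $H_{2}^{0}=Q^{0}\cap\{u_{2}=0\}$ and hence $Q^{0}\setminus H_{2}^{0}=Q^{0}\cap\{u_{2}\neq0\}$. The plan is to dehomogenize by setting $u_{2}=1$, so that this chart becomes $\mathrm{Spec}(k[u_{0},u_{1},u_{3}][x])$ and the defining equation of $Q^{0}$ reads $x+u_{1}^{2}+\alpha_{3}u_{1}-u_{0}u_{3}=0$. Solving this for $x$ exhibits $Q^{0}\setminus H_{2}^{0}$ as the graph of the polynomial $x=u_{0}u_{3}-u_{1}^{2}-\alpha_{3}u_{1}$ over $\mathrm{Spec}(k[u_{0},u_{1},u_{3}])$, hence as an affine $3$-space with coordinates $u_{0},u_{1},u_{3}$, and it simultaneously lets me eliminate $x$ from the equations of $H_{1}^{0}$, $S_{h}^{0}$ and $T^{0}$.

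A direct substitution then yields $H_{1}^{0}\cap(Q^{0}\setminus H_{2}^{0})=\{u_{1}=0\}$ and $S_{h}^{0}\cap(Q^{0}\setminus H_{2}^{0})=\{u_{3}+u_{0}^{2}+\alpha_{4}u_{0}+\alpha_{6}u_{1}+\alpha_{7}=0\}$, the latter being the graph of a polynomial in $u_{0},u_{1}$. I would therefore set $v_{1}=u_{1}$, $v_{2}=u_{3}+u_{0}^{2}+\alpha_{4}u_{0}+\alpha_{6}u_{1}+\alpha_{7}$ and $v_{3}=u_{0}$, which is a triangular, hence invertible, polynomial change of coordinates on $\mathbb{A}^{3}=\mathrm{Spec}(k[u_{0},u_{1},u_{3}])$ straightening $H_{1}^{0}$ to $\{v_{1}=0\}$ and $S_{h}^{0}$ to $\{v_{2}=0\}$.

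Finally I would identify $T^{0}$. Since $T^{0}\subset S_{h}^{0}\cap H_{1}^{0}$ by definition, it suffices to check that the extra equation $u_{3}^{2}+u_{0}u_{2}x+\alpha_{4}u_{0}u_{3}+\alpha_{7}u_{2}u_{3}=0$ defining $T^{0}$ inside $S_{h}^{0}\cap H_{1}^{0}$ is automatically satisfied there, so that $T^{0}\cap(Q^{0}\setminus H_{2}^{0})=S_{h}^{0}\cap H_{1}^{0}\cap(Q^{0}\setminus H_{2}^{0})=\{v_{1}=v_{2}=0\}$. On the chart $u_{2}=1$ and along $H_{1}^{0}=\{u_{1}=0\}$ one has $x=u_{0}u_{3}$, and the left-hand side of this equation then factors as $u_{3}\,(u_{3}+u_{0}^{2}+\alpha_{4}u_{0}+\alpha_{7})$; its second factor is exactly the equation of $S_{h}^{0}$ restricted to $\{u_{1}=0\}$, so the product vanishes identically on $S_{h}^{0}\cap H_{1}^{0}\cap(Q^{0}\setminus H_{2}^{0})$, which gives the claim. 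The computation is entirely elementary; the only step where I would be careful is this last factorization, since it is precisely what guarantees that $T^{0}$ is the whole coordinate line $\{v_{1}=v_{2}=0\}$ and not a proper closed subscheme of it.
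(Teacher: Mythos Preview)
Your proof is correct and follows essentially the same approach as the paper: dehomogenize at $u_{2}=1$, eliminate $x$ via the quadric equation to identify $Q^{0}\setminus H_{2}^{0}$ with $\mathbb{A}^{3}_{(u_{0},u_{1},u_{3})}$, and read off the equations of $H_{1}^{0}$ and $S_{h}^{0}$ directly. The only minor difference is in the handling of $T^{0}$: the paper observes globally that $S_{h}^{0}\cap H_{1}^{0}=s^{0}\cup T^{0}$ and that $s^{0}\subset H_{2}^{0}$, so removing $H_{2}^{0}$ leaves exactly $T^{0}$; you instead verify by the explicit factorization $u_{3}(u_{3}+u_{0}^{2}+\alpha_{4}u_{0}+\alpha_{7})$ that the third equation of $T^{0}$ is redundant on the chart. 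Both arguments are equivalent and equally elementary.
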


\begin{proof}
By direct computation, we have 
\[
\begin{array}{ccl}
Q^{0}\setminus H_{2}^{0} & = & \{u_{2}^{2}x+u_{1}^{2}+\alpha_{3}u_{1}u_{2}-u_{0}u_{3}=0,u_{2}\neq0\}\subset\mathbb{P}^{3}\times\mathbb{A}^{1}\\
 & \cong & \{x+u_{1}^{2}+\alpha_{3}u_{1}-u_{0}u_{3}=0\}\subset\mathbb{A}_{(u_{0},u_{1},u_{3})}^{3}\times\mathbb{A}^{1}\\
 & \cong & \mathbb{A}_{(u_{0},u_{1},u_{3})}^{3},\\
H_{1}^{0}\cap(Q^{0}\setminus H_{2}^{0}) & \cong & \{u_{1}=0\}\subset\mathbb{A}_{(u_{0},u_{1},u_{3})}^{3},\\
S_{h}^{0}\cap(Q^{0}\setminus S_{h}^{0}) & \cong & \{u_{0}^{2}+\alpha_{4}u_{0}+\alpha_{6}u_{1}+\alpha_{7}+u_{3}=0\}\subset\mathbb{A}_{(u_{0},u_{1},u_{3})}^{3}.
\end{array}
\]
On the other hand, an easy verification shows that $S_{h}^{0}\cap H_{1}^{0}=s^{0}\cup T^{0}$.
Since $s^{0}\subset H_{2}^{0}$, we have $(S_{h}^{0}\cap H_{1}^{0})\cap(Q^{0}\setminus H_{2}^{0})=T^{0}\cap(Q^{0}\setminus H_{2}^{0})$,
which proves the assertion.
\end{proof}
Next we consider $\mathbb{P}^{3}\times\mathbb{A}^{1}$ as an open
subset of the projective bundle $\mathbb{F}=\mathbb{P}_{\mathbb{P}^{1}}(\mathcal{O}_{\mathbb{P}^{1}}^{\oplus3}\oplus\mathcal{O}_{\mathbb{P}^{1}}(-1))$
over $\mathbb{P}^{1}$ as follows : 
\[
\begin{array}{ccccc}
\mathbb{F} & \supset & \{x_{1}\neq0\} & \cong & \mathbb{P}_{[u_{0}:u_{1}:u_{2}:u_{3}]}^{3}\times\mathbb{A}_{(x)}^{1}\\
 &  & ([w_{0}:w_{1}:w_{2}:w_{3}],[x_{0}:x_{1}]) & \mapsto & ([w_{0}:w_{1}:w_{2}:w_{3}x_{1}^{-1}],x_{0}x_{1}^{-1}).
\end{array}
\]
The closures of the images of $Q^{0},S_{h}^{0},s^{0},H_{i}^{0}$ and
$T^{0}$ in $\mathbb{F}$ are then described as follows: 
\[
\begin{array}{ccccl}
Q & = & \overline{Q^{0}}^{\mathbb{F}} & = & \{w_{2}^{2}x_{0}+x_{1}(w_{1}^{2}+\alpha_{3}w_{1}w_{2})-w_{0}w_{3}=0\},\\
S_{h} & = & \overline{S_{h}^{0}}^{\mathbb{F}} & = & Q\cap\{x_{1}(w_{0}^{2}+w_{2}(\alpha_{4}w_{0}+\alpha_{6}w_{1}+\alpha_{7}w_{2}))+w_{2}w_{3}=0\},\\
s & = & \overline{s^{0}}^{\mathbb{F}} & = & \{w_{0}=w_{1}=w_{2}=0\},\\
H_{i} & = & \overline{H_{i}^{0}}^{\mathbb{F}} & = & Q\cap\{w_{i}=0\}\ \ (\text{for\ }i=1,2),\\
T & = & \overline{T^{0}}^{\mathbb{F}} & = & S_{h}\cap H_{1}\cap\{w_{3}^{2}+x_{1}(w_{0}w_{2}x_{0}+\alpha_{4}w_{0}w_{3}+\alpha_{7}w_{2}w_{3})=0\}.
\end{array}
\]
The restriction to $Q$ of the $\mathbb{P}^{3}$-bundle structure
$p\colon\mathbb{F}\to\mathbb{P}^{1}$ is a quadric fibration $q:Q\to\mathbb{P}^{1}$.
We let $\mathcal{O}_{Q}(1)=\mathcal{O}_{\mathbb{F}}(1)|_{Q}$ and
$S_{f}=q^{-1}([1:0])$. Then $H_{i}\in|\mathcal{O}_{Q}(1)|$ for $i=1,2$,
$S_{h}\sim2H_{1}+S_{f}$ and $-K_{Q}\sim2H_{1}+2S_{f}$. The following
lemma summarizes properties of the projective varieties above. 
\begin{lem}
\label{lem:Quadric-Setup-Properties}With the notation above, the
following hold: 

a) The variety $Q$ is a smooth weak Fano threefold of type (2,3,2)
in \cite[Theorem 2.3]{Tak22}. In particular, $(-K_{Q})^{3}=40$ and
$s$ is the unique $(-K_{Q})$-trivial curve. 

b) The curve $T$ is a smooth rational trisection of $q:Q\to\mathbb{P}^{1}$.

c) The curve $S_{h}\cap H_{1}$ is the scheme theoretic disjoint union
of $s$ and $T$.

d) The surface $H_{1}$ is smooth and the restriction $q|_{H_{1}}:H_{1}\to\mathbb{P}^{1}$
is a conic bundle with a unique singular fiber $H_{1}\cap q^{-1}([0:1])=\{x_{0}=w_{0}=w_{1}=0\}\cup\{x_{0}=w_{1}=w_{3}=0\}$.
Moreover, the $q$-trisection $T$ intersects with $\{x_{0}=w_{0}=w_{1}=0\}$
(resp.\ $\{x_{0}=w_{1}=w_{3}=0\}$) once (resp.\ twice). 

e) The surface $H_{2}$ is smooth and the restriction $q|_{H_{2}}:H_{2}\to\mathbb{P}^{1}$
is a conic bundle with a unique singular fiber $H_{2}\cap q^{-1}([1:0])=H_{2}\cap S_{f}$.
The support of $S_{h}\cap H_{2}$ is the union of $s$ and curves
in $S_{f}$. Moreover, $H_{1}\cap H_{2}$ is the union of two disjoint
section $s\sqcup\{w_{1}=w_{2}=w_{3}=0\}$. 

f) The singular locus of the surface $S_{h}$ equals $\{x_{1}=w_{0}=w_{2}=w_{3}=0\}$
and the general fibers of $q|_{S_{h}}:S_{h}\to\mathbb{P}^{1}$ are
elliptic curves.
\end{lem}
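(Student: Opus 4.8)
The plan is to establish all six assertions by explicit computations in the two standard affine charts $\{x_{0}\neq0\}$ and $\{x_{1}\neq0\}$ of the $\mathbb{P}^{3}$-bundle $p\colon\mathbb{F}\to\mathbb{P}^{1}$, using the bihomogeneous equations of $Q,S_{h},s,H_{1},H_{2},T$ recorded above; the chart $\{x_{1}\neq0\}\cong\mathbb{P}^{3}\times\mathbb{A}^{1}$ is the one in which the varieties $Q^{0},S_{h}^{0},\dots$ of the auxiliary construction live. The one recurrent subtlety is that the transition to the chart $\{x_{0}\neq0\}$, equivalently the passage to the fiber $S_{f}=q^{-1}([1:0])$, rescales the $w_{3}$-coordinate by the transition function of the summand $\mathcal{O}_{\mathbb{P}^{1}}(-1)$, so that closures over the point $[1:0]$ must be taken only after an appropriate rescaling.

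For a), smoothness of $Q$ follows from the Jacobian criterion applied to its equation in each chart of $\mathbb{F}$. Setting $\zeta=c_{1}(\mathcal{O}_{\mathbb{F}}(1))$ and letting $f$ be the class of a fiber of $p$, the equations give $Q\sim2\zeta+f$ and $-K_{\mathbb{F}}\sim4\zeta+3f$, so $-K_{Q}\sim(2\zeta+2f)|_{Q}=2(\zeta+f)|_{Q}$; the system $|\zeta+f|$ is base-point free on $\mathbb{F}$ because the sections $w_{i}x_{j}$ and $w_{3}$ have no common zero, so $-K_{Q}$ is nef, and $(-K_{Q})^{3}=(2\zeta+2f)^{3}(2\zeta+f)=40>0$ shows it is big, whence $Q$ is a smooth weak Fano threefold. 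From $\zeta\cdot s=-1$ and $f\cdot s=1$ we get $-K_{Q}\cdot s=0$; since the fiber class of $q$ spans the other extremal ray of $\overline{NE}(Q)$ and is $(-K_{Q})$-positive, every irreducible $(-K_{Q})$-trivial curve lies in the ray spanned by $s$ and is contracted by the anti-canonical morphism, whose exceptional locus is computed to be exactly $s$. Comparing $(-K_{Q})^{3}=40$, $\rho(Q)=2$ and the presence of the quadric fibration $q$ with the classification in \cite[Theorem 2.3]{Tak22} identifies $Q$ as a member of the family of type $(2,3,2)$.

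For b), one uses that $S\cap\{z_{1}=0\}\cong\mathbb{A}^{1}$ and checks that the rational map $\Phi_{B}$ is defined on this affine line away from one point and that the resulting parametrization extends, after rescaling over $[1:0]$, to a closed immersion $\mathbb{P}^{1}\hookrightarrow\mathbb{F}$ with image $T$; hence $T$ is smooth rational, and the degree of $q|_{T}$ equals the degree in $[z_{0}:z_{2}]$ of the cubic cutting out $S\cap\{z_{1}=0\}$, namely $3$, so $T$ is a trisection. For c), the set-theoretic identity $S_{h}^{0}\cap H_{1}^{0}=s^{0}\cup T^{0}$ was already obtained in the proof of Lemma \ref{lem:linearemb}; a fiberwise analysis over the finitely many special points of $\mathbb{P}^{1}$ (chiefly over $[1:0]$) shows that no further component appears in the closure, that $s$ and $T$ are disjoint, and that the scheme structure is reduced, giving $S_{h}\cap H_{1}=s\sqcup T$. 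For d), putting $w_{1}=0$ in the equation of $Q$ displays $H_{1}$ as a conic in each fiber of $p$, so $q|_{H_{1}}$ is a conic bundle; the Jacobian criterion shows $H_{1}$ is smooth with a unique degenerate conic, occurring over $[0:1]$ and equal to the reduced union $\{x_{0}=w_{0}=w_{1}=0\}\cup\{x_{0}=w_{1}=w_{3}=0\}$ of two lines, and substituting the parametrization of $T$ and imposing $x_{0}=0$ yields the stated intersection numbers $1$ and $2$.

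For e) and f) the same substitutions are used: $H_{2}=\{w_{2}=0,\ x_{1}w_{1}^{2}-w_{0}w_{3}=0\}$ is a conic bundle over $\mathbb{P}^{1}$, smooth by the Jacobian criterion, with unique degenerate fiber over $[1:0]$ equal to $H_{2}\cap S_{f}$; restricting the equation of $S_{h}$ to $w_{2}=0$ gives $x_{1}w_{0}^{2}=0$, so $S_{h}\cap H_{2}$ is supported on $s$ together with curves lying in $S_{f}$, while $H_{1}\cap H_{2}=\{w_{1}=w_{2}=0\}\cap\{w_{0}w_{3}=0\}$ is the reduced disjoint union of $s$ and the section $\{w_{1}=w_{2}=w_{3}=0\}$. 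Finally, the Jacobian criterion applied to the pair of equations cutting out $S_{h}\subset\mathbb{F}$ shows that $S_{h}$ is smooth away from the single point $[0:1:0:0]$ over $[1:0]$, i.e.\ the vertex of the cone $S_{f}$; since $S_{h}\sim2H_{1}+S_{f}$ on $Q$, a general fiber of $q|_{S_{h}}$ is cut out on a general fiber $S_{c}\cong\mathbb{P}^{1}\times\mathbb{P}^{1}$ of $q$ by a divisor of class $\mathcal{O}_{S_{c}}(2,2)$, hence is a curve of arithmetic genus one, and generic smoothness together with the irreducibility of $S_{h}$ forces it to be a smooth connected genus-one curve, i.e.\ an elliptic curve. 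The step demanding the most attention throughout is the bookkeeping of the $\mathcal{O}_{\mathbb{P}^{1}}(-1)$-twist when taking closures over $[1:0]$: this is precisely where the scheme-theoretic disjointness and reducedness claims of c) and e), and the exact determination of the singular point of $S_{h}$ in f), have to be verified rather than only their set-theoretic shadows.
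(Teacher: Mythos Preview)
Your proposal is correct and follows the same computational strategy as the paper: Jacobian criterion for smoothness, identification of $Q$ via its class $2\zeta+f$ in $\mathbb{F}$, and direct substitution in charts for the conic-bundle and singularity statements in d), e), f).

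The one noteworthy difference is in c). You propose verifying the scheme-theoretic equality $S_{h}\cap H_{1}=s\sqcup T$ by a fiberwise analysis at the special points of $\mathbb{P}^{1}$, tracking the $\mathcal{O}_{\mathbb{P}^{1}}(-1)$-twist over $[1:0]$ to rule out extra components and non-reduced structure. The paper instead argues as follows: after a local check that $S_{h}\cap H_{1}$ contains no $q$-vertical curve, one computes the intersection number $(S_{h}\cdot H_{1}\cdot S_{f})_{Q}=((2\mathcal{O}_{Q}(1)+S_{f})\cdot\mathcal{O}_{Q}(1)\cdot S_{f})_{Q}=4$; since $s\cup T$ is already a $4$-section of $q$ (a section plus a trisection) and $s\cap T=\emptyset$, this degree count forces $S_{h}\cap H_{1}=s\sqcup T$ as schemes. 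The paper's argument is shorter and avoids the delicate closure computation over $[1:0]$ that you flag as the most demanding step; your approach is more hands-on but equally valid. Either way, the disjointness $s\cap T=\emptyset$ is the key fact reused later, and both approaches establish it.
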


\begin{proof}
The smoothness of $Q$ is checked locally by the Jacobian criterion.
Since $Q$ is a member of the complete linear system $|\mathcal{O}_{\mathbb{F}}(2)\otimes p^{*}\mathcal{O}_{\mathbb{P}^{1}}(1)|$,
Assertion a) holds. Recall that $S\cap\{z_{1}=0\}\subset\mathbb{P}_{\mathbb{A}^{1}}^{2}$
is a rational trisection. Since there is a birational morphism $\mathbb{A}^{1}\cong S\cap\{z_{1}=0\}\rightarrow T$
over the base curve, $T$ is also rational trisection, whose smoothness
follows again from the Jacobian criterion, which proves b). One can
check locally that $S_{h}\cap H_{1}$ contains no $q$-exceptional
curve. By construction, we have $S_{h}\cap H_{1}\supset s\cup T$
as a set and $s\cap T=\{w_{0}=w_{1}=w_{2}=w_{3}=0\}=\emptyset$. Since
$(S_{h}\cdot H_{1}\cdot S_{f})_{Q}=(2\mathcal{O}_{Q}(1)+S_{f}\cdot\mathcal{O}_{Q}(1)\cdot S_{f})_{Q}=4$
and $s\cup T$ is a reducible $4$-section of $q$, Assertion c) follows.
The remaining assertions follow from straightforward computations. 
\end{proof}

\subsubsection{Construction of a del Pezzo completion $(X,B_{h}\cup B_{f})$ of
degree $6$ and proof of Theorem B }
\begin{prop}
\label{prop:Blowup-link}Let $q:Q\to\mathbb{P}^{1}$ be the quadric
fibration constructed in subsection \ref{subsec:Auxiliary-construction}
above and let $f:Y\to Q$ be the blow-up along $T$. Then the following
hold:

a) The variety $Y$ is a weak Fano threefold of Picard rank three. 

b) There exists a Sarkisov link \[\xymatrix@!C{ & Y\ar[dl]_-{f}\ar@{.>}[r]^-{\chi} & Y^{+}\ar[dr]^-{g}\\ Q\ar[d]_-{q} & & & X\ar[d]^-{\pi}\\ \mathbb{P}^{1}\ar@{=}[rrr] & & & \mathbb{P}^{1}}\]
where $\chi$ is the Atiyah flop of the strict transform $l$ in $Y$
of the curve $\{x_{0}=w_{1}=w_{3}=0\}\subset Q$, $\pi:X\to\mathbb{P}^{1}$
is a del Pezzo fibration of degree $6$ and $g$ is the blow-up along
a section of $\pi$ with exceptional divisor $(H_{1})_{Y^{+}}$.

c) The flopped curve $l^{+}\subset Y^{+}$ of $\chi$ is contained
in $(H_{2})_{Y^{+}}$ but not in $(S_{h}\cup H_{1})_{Y^{+}}$ and
the complement $U_{H_{2}}\subset(H_{2})_{Y^{+}}$ of $(H_{2})_{Y^{+}}\cap(S_{f}\cup S_{h}\cup H_{1})_{Y^{+}}$
is isomorphic to $\mathbb{A}^{2}$.
\end{prop}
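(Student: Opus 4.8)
The proof proceeds in three stages, matching the three assertions. For a), I would compute $-K_{Y}$ on the blow-up $f\colon Y\to Q$ of the trisection $T$. Since $T$ is a smooth curve (Lemma~\ref{lem:Quadric-Setup-Properties} b)) with $(-K_{Q}\cdot T)$ computable from $-K_{Q}\sim 2H_{1}+2S_{f}$ and the fact that $T$ is a trisection meeting each $H_{1}$, $H_{2}$ in a controlled number of points, one has $-K_{Y}\sim f^{*}(-K_{Q})-(E_{T})_{Y}$, and I would express this in terms of the generators of $\mathrm{Pic}(Y)$ coming from $f^{*}H_{1}$, $f^{*}S_{f}$, $(E_{T})_{Y}$. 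The key numerical points are that $(-K_{Y})^{3}>0$ and that $-K_{Y}$ is nef: nefness amounts to checking that $(-K_{Y})\cdot C\geq 0$ for every curve, and the only candidates for $(-K_{Y})$-triviality should be the strict transform $l$ of the curve $\{x_{0}=w_{1}=w_{3}=0\}$ together with possibly the strict transform of $s$ — here I would use Lemma~\ref{lem:Quadric-Setup-Properties} a) (that $s$ is the unique $(-K_{Q})$-trivial curve) and the intersection data of $T$ with $s$ and with the singular fiber of $q|_{H_{1}}$ from Lemma~\ref{lem:Quadric-Setup-Properties} c), d). This gives $Y$ weak Fano of Picard rank three.

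For b), the strategy is exactly as in the derivation of the link \eqref{eq:link} in subsection~\ref{sec:(2,3)-construction}: once $Y$ is weak Fano of Picard rank three with a single flopping curve $l$ (an Atiyah flop, since $l$ is a smooth rational curve with normal bundle $\mathcal{O}(-1)^{\oplus 2}$ in a smooth threefold — verifiable by local computation along $\{x_{0}=w_{1}=w_{3}=0\}$), the two-ray game on $Y$ produces on one side the morphism $f$ to the Mori fiber space $q\colon Q\to\mathbb{P}^{1}$ and on the other side, after the flop $\chi\colon Y\dashrightarrow Y^{+}$, a second extremal contraction. I would identify the latter: since $(H_{1})_{Y}$ is the divisor that becomes $g$-exceptional, $g\colon Y^{+}\to X$ contracts $(H_{1})_{Y^{+}}$ onto a curve, and from $H_{1}\in|\mathcal{O}_{Q}(1)|$ together with the normal-bundle computation this curve is a section of the induced fibration $\pi\colon X\to\mathbb{P}^{1}$; then $\pi$ has relative Picard rank one and $-K_{X}$ is $\pi$-ample with $(-K_{X})^{2}\cdot S_{f}=6$, so $\pi$ is a del Pezzo fibration of degree $6$. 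The degree count $(-K_{X})^2 \cdot(\text{fiber})$ I would get by pushing $-K_{Y}$ forward through $\chi$ and $g$, using that the flop preserves $-K$ and that $g^{*}(-K_{X})=-K_{Y^{+}}+(\text{multiple of }(H_{1})_{Y^{+}})$. (Citing \cite[Proposition 3.5(2)]{Fuk17}, \cite[Proposition 2.1(2)]{Fuk18}, or the analogous references used for \eqref{eq:link}, would shorten this.)

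For c), this is the geometrically delicate part and I expect it to be the main obstacle. I need to locate the flopped curve $l^{+}$ relative to the strict transforms of $H_{2}$, $S_{h}$, $H_{1}$, $S_{f}$. Since $l$ lies in the fiber $q^{-1}([0:1])$, and by Lemma~\ref{lem:Quadric-Setup-Properties} d) it is the component $\{x_{0}=w_{1}=w_{3}=0\}$ of the singular fiber of $q|_{H_{1}}$, I would track how $H_{2}$ behaves: $l\subset (H_{2})_{Q}$ must be checked from the equations (the point $[w_{0}:w_{1}:w_{2}:w_{3}]$ with $w_{1}=w_{3}=0$ lies on $\{w_{2}=0\}=H_{2}$ only if additionally $w_{2}=0$, so I should recheck — more carefully, I want $l^{+}\subset (H_{2})_{Y^{+}}$, and the cleanest route is to note that a flop takes a curve meeting a divisor $D$ transversally in $\delta$ points to a curve meeting $D_{Y^{+}}$ correspondingly, and to compute the intersection numbers $l\cdot (H_{2})_{Y}$, $l\cdot(S_{h})_{Y}$, $l\cdot(H_{1})_{Y}$ from the explicit equations and the blow-up of $T$). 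Then the claim that $l^{+}\not\subset (S_{h}\cup H_{1})_{Y^{+}}$ follows from the corresponding non-containment/transversality before the flop. Finally, to see $U_{H_{2}}\cong\mathbb{A}^{2}$: the surface $H_{2}$ is smooth with $q|_{H_{2}}$ a conic bundle with one singular fiber over $[1:0]$ (Lemma~\ref{lem:Quadric-Setup-Properties} e)); removing from $(H_{2})_{Y^{+}}$ the fiber $(S_{f})_{Y^{+}}$, the strict transform of $S_{h}\cap H_{2}$ (supported on $s$ and curves in $S_{f}$), and $(H_{1})_{Y^{+}}\cap (H_{2})_{Y^{+}}$ (two disjoint sections by Lemma~\ref{lem:Quadric-Setup-Properties} e)), and accounting for the effect of the blow-up of $T\subset H_{1}\cap H_{2}$ and the flop on $H_{2}$, should leave a smooth affine surface with trivial Picard group and no non-constant units that is a $\mathbb{P}^{1}$- or $\mathbb{A}^{1}$-fibration over $\mathbb{A}^{1}$ with all fibers $\mathbb{A}^{1}$; such a surface is $\mathbb{A}^{2}$. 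I would make this precise by exhibiting explicit affine coordinates on the chart $\{w_{2}\neq 0, x_{1}\neq 0\}$ of $\mathbb{F}$, as in Lemma~\ref{lem:linearemb}, and writing down the equations of the relevant curves there, so that the complement is visibly $\mathbb{A}^{2}$; the bookkeeping of which curves survive in $Y^{+}$ after $f$ and $\chi$ is where the real care is needed.
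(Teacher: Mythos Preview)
Your outline for a) and b) is essentially the paper's approach: nefness of $-K_Y$ is obtained from the two expressions $-K_Y\sim f^*S_f+(S_h)_Y\sim f^*H_1+2f^*S_f+(H_1)_Y$, which force any $(-K_Y)$-negative curve into $(S_h)_Y\cap(H_1)_Y=s_Y$ (via Lemma~\ref{lem:Quadric-Setup-Properties} c)); the link comes from \cite{Fuk17,Fuk18}, and the flopping curve is pinned down by a similar containment argument inside $(H_1)_Y$, with the normal bundle read off from the exact sequence for $l\subset(H_1)_Y\subset Y$.

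Part c), however, has a genuine gap. Your heuristic that ``a flop takes a curve meeting a divisor $D$ transversally in $\delta$ points to a curve meeting $D_{Y^{+}}$ correspondingly'' is wrong, and your claim that $l^{+}\not\subset(H_{1})_{Y^{+}}$ ``follows from the corresponding non-containment/transversality before the flop'' is inconsistent with what you yourself noted a line earlier: $l$ \emph{is} contained in $(H_{1})_{Y}$, being a component of the singular fiber of $q|_{H_{1}}$. The behavior under the Atiyah flop is in fact the opposite of your heuristic. The paper resolves this by passing to the common blow-up $\eta\colon\tilde{Y}\to Y$, $\eta^{+}\colon\tilde{Y}\to Y^{+}$ with exceptional $G\cong\mathbb{P}^{1}\times\mathbb{P}^{1}$: from $(l\cdot(H_{1})_{Y})_{Y}=-1$ one computes $((H_{1})_{\tilde{Y}}|_{G})^{2}=0$, so $(H_{1})_{\tilde{Y}}\cap G$ is a ruling contracted by $\eta^{+}$, whence $l^{+}\not\subset(H_{1})_{Y^{+}}$; conversely, since $l$ meets $(H_{2})_{Y}$ transversally at the single point $([1{:}0{:}0{:}0],[0{:}1])$, the surface $(H_{2})_{\tilde{Y}}\cap G$ is a fiber of $\eta$ and hence maps isomorphically onto $l^{+}$, so $(H_{2})_{Y^{+}}$ is the blow-up of $(H_{2})_{Y}$ at that point with exceptional curve $l^{+}$. (For $S_{h}$ one checks $(S_{h})_{Y}\cap l=\emptyset$ from $((S_{h})_{Y}\cdot l)=0$.) Finally, your plan to compute in the chart $\{w_{2}\neq 0\}$ cannot see $H_{2}=\{w_{2}=0\}$ at all; the paper instead uses the identification just obtained to view $(H_{2})_{Y^{+}}\setminus(S_{f})_{Y^{+}}$ as the blow-up of $H_{2}\setminus S_{f}\cong\mathbb{P}^{1}\times\mathbb{A}^{1}$ at one point and then removes the strict transforms of the two disjoint sections from Lemma~\ref{lem:Quadric-Setup-Properties} e), one of which passes through the blown-up point, leaving $\mathbb{A}^{2}$.
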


\begin{proof}
First we show that $-K_{Y}$ is nef. Suppose that a $(-K_{Y})$-negative
curve $r\subset Y$ exists. Then $r\subset(S_{h})_{Y}$ since $-K_{Y}\sim f^{*}S_{f}+(S_{h})_{Y}$
and $f^{*}S_{f}$ is nef. On the other hand, we have $-K_{Y}\sim f^{*}H_{1}+2f^{*}S_{f}+(H_{1})_{Y}$.
The base locus of $|f^{*}H_{1}|$ equals $s_{Y}$ because the base
locus of $|H_{1}|$ is $H_{1}\cap H_{2}\cap\{w_{0}=0\}=s$ and $s\cap T=\emptyset$.
Thus $r\subset s_{Y}\cup(H_{1})_{Y}=(H_{1})_{Y}$. Since $r\subset(S_{h})_{Y}\cap(H_{1})_{Y}=s_{Y}$,
we conclude that $r=s_{Y}$. However, since $s\cap T=\emptyset$,
we have $(-K_{Y}\cdot s_{Y})_{Y}=(-K_{Q}\cdot s)_{Q}=0$, a contradiction.
Hence $-K_{Y}$ is nef. On the other hand, we have $(-K_{Q}\cdot T)_{Q}=(-K_{Q}\cdot T+s)_{Q}=(-K_{Q}\cdot S_{h}\cdot H_{1})_{Q}=8$.
Hence $(-K_{Y})^{3}=(-K_{Q})^{3}-2\{(-K_{Q}\cdot T)_{Q}+1\}=22$,
and $-K_{Y}$ is big, which proves Assertion a). 

The existence of the claimed Sarkisov link with $g$-exceptional divisor
$(H_{1})_{Y^{+}}$ follows from \cite[Proposition 3.5 (2)]{Fuk17}
and \cite[Proposition 2.1 (2)]{Fuk18}. The fact that $\chi$ is the
Atiyah flop of $l$ can be verified as follows: let $r\subset Y$
be a curve flopped by $\chi$ and set $t=(q\circ f)(r)$. If $r$
is $f$-exceptional, then $(-K_{Y}\cdot r)=1$, a contradiction. Since
$-K_{Y}\sim f^{*}H_{1}+2f^{*}S_{f}+(H_{1})_{Y}$, we have $0=(-K_{Y}\cdot r)_{Y}=(f^{*}H_{1}+(H_{1})_{Y}\cdot r)_{Y}=(H_{1}\cdot f_{*}r)_{Q}+((H_{1})_{Y}\cdot r)_{Y}$.
Since $H_{1}$ is $q$-ample, we obtain $((H_{1})_{Y}\cdot r)_{Y}<0$.
Hence $r\subset(H_{1})_{Y}$. Since $T\subset H_{1}$, we have $(H_{1})_{Y}\cong H_{1}$.
If $H_{1}\cap q^{-1}(t)\cong(H_{1})_{Y}\cap(q\circ f)^{-1}(t)$ is
smooth, then $(-K_{Y}\cdot r)_{Y}=(-K_{Y}\cdot(H_{1})_{Y}\cdot(q\circ f)^{-1}(t))_{Y}=1$,
a contradiction. Hence $H_{1}\cap q^{-1}(t)$ is singular, which implies
that $t=[0:1]$. Since $(-K_{Y}\cdot r)_{Y}=0$ and $(-K_{Q}\cdot f_{*}r)_{Q}=2$,
$f_{*}r$ is the irreducible component of $q^{-1}(t)$ intersecting
with $T$ twice. Lemma \ref{lem:Quadric-Setup-Properties} d) now
implies that $r=l$. On the other hand, since $l\subset(H_{1})_{Y}$
is a $(-1)$-curve and $(l\cdot(H_{1})_{Y})_{Y}=(l\cdot f^{*}H_{1})_{Y}-2=-1$,
we deduce from the normal bundle sequence 
\[
\xymatrix{0\ar[r] & {N_{l}(H_{1})_{Y}}\ar[r] & {N_{l}Y}\ar[r] & {(N_{(H_{1})_{Y}}Y)|_{l}}\ar[r] & 0}
\]
that $N_{l}Y\cong\mathcal{O}_{\mathbb{P}^{1}}(-1)^{\oplus2}$, hence
that $\chi$ is the Atiyah flop of $l$. 

To prove Assertion c), we first observe that $S_{h}\cap f_{*}l$ is
$0$-dimensional since $S_{h}\cap H_{1}=s\sqcup T$. Then $(S_{h})_{Y}\cap l=\emptyset$
since $((S_{h})_{Y}\cdot l)=(-K_{Y}-(f^{*}S_{f})\cdot l)=0$ and hence,
$l^{+}$ is not contained in $(S_{h})_{Y^{+}}$. On the other hand,
let $\eta\colon\widetilde{Y}\to Y$ be the blow-up along $l$ and
$G$ be the $\eta$-exceptional divisor. Then the contraction of the
other ruling of $G\cong\mathbb{P}^{1}\times\mathbb{P}^{1}$ gives
the morphism $\eta^{+}\colon\widetilde{Y}\to Y^{+}$. Since $N_{l}Y\cong\mathcal{O}_{\mathbb{P}^{1}}(-1)^{\oplus2}$,
we have $G^{3}=2$ and 
\[
((H_{1})_{\widetilde{Y}}|_{G})^{2}=((H_{1})_{\widetilde{Y}}^{2}\cdot G)_{\widetilde{Y}}=((\eta^{*}(H_{1})_{Y}-G)^{2}\cdot G)_{\widetilde{Y}}=-2(\eta^{*}(H_{1})_{Y}\cdot G^{2})_{\widetilde{Y}}+G^{3}=2((H_{1})_{Y}\cdot l)_{Y}+2=0,
\]
which implies that $(H_{1})_{\widetilde{Y}}\cap G$ is contracted
by $\eta^{+}$. Hence $l^{+}$ is not contained in $(H_{1})_{Y^{+}}$.
Since $s\cap T=\emptyset$ and the support of $S_{h}\cap H_{2}$ is
contained in $s\cup S_{f}$, the support of $T\cap H_{2}$ is contained
in $S_{f}$. An easy computation shows that $(f_{*}l)\cap H_{2}=\{x_{0}=w_{1}=w_{2}=w_{3}=0\}\in H_{1}$.
Hence the morphism $(H_{2})_{Y^{+}}\setminus((H_{2})_{Y^{+}}\cap(S_{f})_{Y^{+}})\to H_{2}\setminus(H_{2}\cap S_{f})$
is the blow up at a point whose exceptional divisor is $l^{+}$. Lemma
\ref{lem:Quadric-Setup-Properties} e) then implies that $(H_{2})_{Y^{+}}\setminus((H_{2})_{Y^{+}}\cap(S_{f})_{Y^{+}})$
is isomorphic to the blow-up of $\mathbb{P}^{1}\times\mathbb{A}^{1}$
at a point, say $x$, and that it contains $U_{H_{2}}$ as the complement
of the strict transform of two disjoint sections one of which passes
through $x$. Therefore, $U_{H_{2}}\cong\mathbb{A}^{2}$. 
\end{proof}
\begin{rem}
We note that $-K_{X}$ is not nef since $(-K_{X}\cdot s_{0})=(-K_{Q}\cdot T)-9=-1$
by \cite[Proposition 3.5 (3)]{Fuk17}. In particular, $\pi\colon X\to\mathbb{P}^{1}$
is distinct from the del Pezzo fibration of degree $6$ constructed
in \cite[Theorem 6.6]{Pro16}.
\end{rem}

\begin{thm}
\label{thm:A3} The threefold $X$ is a completion of $\mathbb{A}^{3}$
with boundary divisor $B=B_{h}\cup B_{f}=(S_{h}\cup S_{f})_{X}$ and
the restriction $\bar{\pi}:B_{h}\setminus(B_{h}\cap B_{f})\to\mathbb{P}^{1}\setminus\pi(B_{f})$
is isomorphic the cubic curve fibration $\xi:S\to\mathbb{A}^{1}$
in (\ref{eq:cubic-fibration}). 
\end{thm}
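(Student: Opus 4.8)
The plan is to work throughout over $\mathbb{A}^{1}=\mathbb{P}^{1}\setminus\{[1:0]\}=\mathbb{P}^{1}\setminus\pi(B_{f})$, over which the vertical boundary $B_{f}=(S_{f})_{X}$ and the whole flopping locus of $\chi$ in \eqref{eq:link} disappear; for a subvariety $Z$ of any of $Q,Y,Y^{+},X$ I write $Z^{0}$ for its part over $\mathbb{A}^{1}$. There are two things to establish: (A) $X^{0}\setminus(S_{h})_{X}\cong\mathbb{A}^{3}$, which together with the facts that $(S_{f})_{X}$ is a fiber and $(S_{h})_{X}$ is horizontal (its general fibers over $\mathbb{P}^{1}$ being the elliptic curves of Lemma~\ref{lem:Quadric-Setup-Properties}~f)) shows that $B=(S_{h}\cup S_{f})_{X}$ is the boundary $X\setminus\mathbb{A}^{3}$; and (B) that the induced fibration $\bar{\pi}\colon(S_{h})_{X}^{0}\to\mathbb{A}^{1}$ is isomorphic to $\xi\colon S\to\mathbb{A}^{1}$ of \eqref{eq:cubic-fibration}.

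For (B) I would trace the horizontal boundary back along the link \eqref{eq:link}. The flop $\chi$ is supported on $l\subset(H_{1})_{Y}$, which is disjoint from $(S_{h})_{Y}$ by the computation in the proof of Proposition~\ref{prop:Blowup-link}~c), so $(S_{h})_{Y^{+}}\cong(S_{h})_{Y}$. Since $g$ is the blow-up of the smooth section $s_{0}$ and $(S_{h})_{Y^{+}}$ meets the $g$-exceptional divisor $(H_{1})_{Y^{+}}$ only along the proper transform of the section $s\subset S_{h}\cap H_{1}$ (disjoint from $T$ and from $l$ by Lemma~\ref{lem:Quadric-Setup-Properties}~c) and the proof of Proposition~\ref{prop:Blowup-link}~c)), which $g$ maps isomorphically onto $s_{0}$, we get $s_{0}\subset(S_{h})_{X}$ and an isomorphism $(S_{h})_{Y^{+}}\cong(S_{h})_{X}$. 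Finally $f$ blows up $T\subset S_{h}$, and over $\mathbb{A}^{1}$ the surface $S_{h}^{0}$ is smooth by Lemma~\ref{lem:Quadric-Setup-Properties}~f) while $T^{0}$ is a curve, hence a Cartier divisor, in it, so $(S_{h})_{Y}^{0}\cong S_{h}^{0}$. Composing, $(S_{h})_{X}^{0}\cong S_{h}^{0}=\overline{\Phi_{B}(S)}^{0}$ compatibly with the projections to $\mathbb{A}^{1}$, and it remains to verify that $\Phi_{B}$ of \eqref{eq:Phi_B} restricts to an isomorphism $S\cong S_{h}^{0}$: $\Phi_{B}|_{S}$ is birational, extends to a morphism along the bisection $B$ after clearing denominators using the equation of $S$ in \eqref{eq:cubic-fibration}, and is injective, hence is an isomorphism onto the smooth surface $S_{h}^{0}$; since it commutes with the projections to $\mathbb{A}^{1}_{x}$ this proves (B).

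For (A), I would first produce an explicit affine chart. By Lemma~\ref{lem:linearemb}, $Q^{0}\setminus H_{2}^{0}\cong\mathbb{A}^{3}$ with coordinates $(v_{1},v_{2},v_{3})$ for which $H_{1}^{0}=\{v_{1}=0\}$, $S_{h}^{0}=\{v_{2}=0\}$ and $T^{0}=\{v_{1}=v_{2}=0\}$; since $f$ is the blow-up along $T$ and $T^{0}\subset S_{h}^{0}\subset Q^{0}\setminus H_{2}^{0}$ is a chain of regular embeddings, the open set $Y\setminus((S_{f})_{Y}\cup(H_{2})_{Y}\cup(S_{h})_{Y})$ is the affine blow-up $\mathrm{Bl}_{T}^{S_{h}}(Q^{0}\setminus H_{2}^{0})=\mathrm{Spec}\,k[v_{1}/v_{2},v_{2},v_{3}]\cong\mathbb{A}^{3}$, in which $E_{f}$ and $(H_{1})_{Y}$ are the coordinate planes $\{v_{2}=0\}$ and $\{v_{1}/v_{2}=0\}$. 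Now the composite rational map $g\circ\chi\colon Y\dashrightarrow X$ contracts exactly the divisor $(H_{1})_{Y}$ and has indeterminacy locus $l\subset(H_{1})_{Y}$, so it restricts to an open immersion on $Y\setminus(H_{1})_{Y}$; deleting $(H_{1})_{Y}$ from the chart above and using that $(H_{1})_{Y}$ is carried into $s_{0}\subset(S_{h})_{X}$ gives an open immersion $\mathbb{G}_{m}\times\mathbb{A}^{2}=\{v_{1}/v_{2}\neq0\}\hookrightarrow X^{0}\setminus(S_{h})_{X}$ with image $X^{0}\setminus((S_{h})_{X}\cup(H_{2})_{X})$. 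On the other hand, the open set $U_{H_{2}}\cong\mathbb{A}^{2}$ of Proposition~\ref{prop:Blowup-link}~c) is disjoint from $(H_{1})_{Y^{+}}$, hence is carried isomorphically by $g$ onto the prime divisor $(H_{2})_{X}^{0}\setminus((H_{2})_{X}\cap(S_{h})_{X})\cong\mathbb{A}^{2}$ of $X^{0}\setminus(S_{h})_{X}$. Thus $X^{0}\setminus(S_{h})_{X}$ is the union of $\mathbb{G}_{m}\times\mathbb{A}^{2}$ and an affine plane glued in as a prime divisor; it is affine (since $\mathrm{Supp}(B)$ is the support of the $\pi$-ample divisor $(S_{h})_{X}+N(S_{f})_{X}$ for $N\gg0$) and has trivial Picard group (since $(S_{h})_{X}$ and $(S_{f})_{X}$ generate $\mathrm{Pic}(X)\cong\mathbb{Z}^{2}$, the class of $(S_{h})_{X}$ restricting to the generator of the Picard group of the generic degree-$6$ del Pezzo fiber). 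Writing $(H_{2})_{X}^{0}=\mathrm{div}(h)$ on this affine threefold, one checks by a direct coordinate computation that $\Gamma(X^{0}\setminus(S_{h})_{X})=k[h,v_{2},v_{3}]$, whence $X^{0}\setminus(S_{h})_{X}\cong\mathbb{A}^{3}$.

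The hard part is exactly this last verification: understanding the non-trivial flop $\chi$ finely enough to see that the $\mathbb{G}_{m}$-chart obtained away from $(H_{2})_{X}$ and the affine plane $(H_{2})_{X}^{0}$ fit together into a polynomial ring, rather than merely into some smooth affine rational threefold with trivial Picard group. This is where the auxiliary construction of \S\ref{subsec:Auxiliary-construction} — and in particular the choice of surfaces $Z$ of Ohta's type (II), so that their affine pieces are genuine coordinate planes — has to be fed in, to pin down the gluing functions; everything before it (the identifications in (B), the affine blow-up computation, and the bookkeeping of which divisors get contracted) is formal given Proposition~\ref{prop:Blowup-link} and Lemmas~\ref{lem:linearemb} and~\ref{lem:Quadric-Setup-Properties}.
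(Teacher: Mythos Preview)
Your treatment of (B) and the decomposition you set up for (A) — identifying $X'=X\setminus(S_{f}\cup S_{h})_{X}$ with $Y^{+}\setminus(S_{f}\cup S_{h}\cup H_{1})_{Y^{+}}$, showing it is smooth affine with trivial Picard group and units $k^{*}$, and exhibiting the open piece $U^{+}\cong U\cong\mathbb{G}_{m}\times\mathbb{A}^{2}$ with complement $U_{H_{2}}\cong\mathbb{A}^{2}$ — are all correct and are exactly the reductions the paper makes.

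The gap is the last step. You propose to conclude by a ``direct coordinate computation'' showing $\Gamma(X')=k[h,v_{2},v_{3}]$, and you yourself flag this as the hard part; but you neither carry it out nor give a reason it should succeed. Note that it is not even clear a priori that $v_{2},v_{3}$ extend from $U^{+}$ across $U_{H_{2}}$ to regular functions on $X'$: that would require control on their order along $(H_{2})_{X}$, which is exactly the information hidden in the flop. Your suggestion that the explicit Ohta-type equations of \S\ref{subsec:Auxiliary-construction} will ``pin down the gluing functions'' is optimistic — those equations live on the $Q$-side and do not by themselves describe $\chi$ near $l$.

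The paper bypasses this entirely and never computes the gluing. Instead it argues abstractly from the three facts you have already established: $A=\Gamma(X')$ is a factorial domain with $A^{*}=k^{*}$; there is $s\in A$ (a defining equation of $(H_{2})_{X}$) with $A[s^{-1}]\cong k[z,z^{-1}][y_{1},y_{2}]$; and $A/(s)\cong\Gamma(U_{H_{2}})$ is a polynomial ring in two variables. Miyanishi's argument \cite[\S2.4]{Miy84} then shows that $R=A\cap k[z,z^{-1}]$ is a polynomial ring in one variable containing $s$, hence $R=k[s]$, so the inclusion $k[s]\subset A$ defines a morphism $p'\colon X'\to\mathbb{A}^{1}$ which is a trivial $\mathbb{A}^{2}$-bundle over $\mathbb{A}^{1}\setminus\{0\}$ with fiber $\mathbb{A}^{2}$ over $0$. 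Sathaye's theorem \cite{Sat83} upgrades this to a Zariski-locally trivial $\mathbb{A}^{2}$-bundle, and Bass--Connell--Wright \cite{BCW76} forces it to be globally trivial, giving $X'\cong\mathbb{A}^{3}$. No explicit understanding of $\chi$ is needed; the characterization theorems do all the work once the decomposition is in place.
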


\begin{proof}
Since $-K_{Y}\sim f^{*}S_{f}+(S_{h})_{Y}$, we have $-K_{X}\sim(S_{f})_{X}+(S_{h})_{X}$.
Since the classes of $-K_{X}$ and $(S_{f})_{X}$ form a $\mathbb{Z}$-basis
of the Picard group of $X$ and $-K_{X}$ is $\pi$-ample, the complement
$X'=X\setminus(S_{f}\cup S_{h})_{X}$ is a smooth affine threefold
whose coordinate ring, say $A$, is a factorial domain whose group
of invertible elements equals $k^{*}$. Since $(S_{h})_{Y^{+}}$ intersects
the $g$-exceptional divisor $(H_{1})_{Y^{+}}$ along $s_{Y^{+}}$,
we have $X'\cong Y^{+}\setminus(S_{f}\cup S_{h}\cup H_{1})_{Y^{+}}$.
In particular, $X'$ contains the Zariski open subset $U^{+}=Y^{+}\setminus(S_{f}\cup S_{h}\cup H_{1}\cup H_{2})_{Y^{+}}$.
By Proposition \ref{prop:Blowup-link} c), the complement $X'\setminus U^{+}=U_{H_{2}}$
is isomorphic to $\mathbb{A}^{2}$. On the other hand, by Lemma \ref{lem:linearemb},
$U=Y\setminus(S_{f}\cup S_{h}\cup H_{1}\cup H_{2})_{Y}$ is isomorphic
to the sub-variety in the blow-up of $\mathbb{A}^{3}$ along a linearly
embedded $\mathbb{A}^{1}$ with complement equal to the strict transform
of two linearly embedded affine planes $\mathbb{A}^{2}$ containing
$\mathbb{A}^{1}$, hence is isomorphic to $\mathbb{A}^{2}\times(\mathbb{A}^{1}\setminus\{0\})$.
Since $l$ (resp. $l^{+}$) is contained in $(H_{1})_{Y}$ (resp.
$(H_{2})_{Y^{+}}$), $\chi$ induces an isomorphism between $U$ and
$U^{+}$. Since $A$ is factorial, there exists an element $s\in A$
such that $A[s^{-1}]$ is the coordinate ring of $U^{+}$, which is
isomorphic to $k[z,z^{-1}][y_{1},y_{2}]$ for some algebraically independent
elements $y_{1},y_{2},z$. By the same arguments as in \cite[Section 2.4]{Miy84},
we deduce that the subring $R=A\cap k[z,z^{-1}]\subset k[z,z^{-1}][y_{1},y_{2}]$
is isomorphic to a polynomial ring in one variable over $k$. Furthermore,
$R$ contains $s$ and $R[s^{-1}]=k[z,z^{-1}]$, implying that $R=k[s]$.
Let $p'\colon X'\to\mathbb{A}^{1}$ be the morphism induced by the
inclusion $k[s]=R\subset A$. By construction, the restriction of
$p'$ over $\mathbb{A}^{1}\setminus\{0\}$ is isomorphic to the trivial
$\mathbb{A}^{2}$-bundle whereas the fiber over $\{s=0\}$ is equal
to $\mathrm{Spec}(A/(s))=U_{H_{2}}\cong\mathbb{A}^{2}$. By \cite[Theorem 1]{Sat83},
$p'\colon X'\to\mathbb{A}^{1}$ is thus a locally trivial $\mathbb{A}^{2}$-bundle
in the Zariski topology, hence is isomorphic to the trivial one by
\cite{BCW76}, showing that $X'\cong\mathbb{A}^{3}$. For the second
assertion, we note that since the birational map $\Phi_{B}$ in (\ref{eq:Phi_B})
is the composition of the relative Veronese embedding $\mathbb{P}_{\mathbb{A}^{1}}^{2}\to\mathbb{P}_{\mathbb{A}^{1}}^{5}$
with the projection $\mathbb{P}_{\mathbb{A}^{1}}^{5}\dashrightarrow\mathbb{P}_{\mathbb{A}^{1}}^{3}$
from the relative projective line $\mathbb{P}_{\mathbb{A}^{1}}^{1}$
which is the linear hull of $B$, it induces an isomorphism $S\cong S_{h}^{0}$
of varieties over $\mathbb{A}^{1}$. On the other hand, since away
from the fiber over $[1:0]$, $T$ is contained in the smooth locus
of $S_{h}$, $l$ is disjoint from $(S_{h})_{Y}$, and $(S_{h})_{Y^{+}}$
intersects with $(H_{1})_{Y^{+}}$ along $s_{Y^{+}}$ transversely,
the birational map $Q\dashrightarrow X$ also induces an isomorphism
$S_{h}^{0}\cong B_{h}\setminus(B_{h}\cap B_{f})$ of varieties over
$\mathbb{A}^{1}$. 
\end{proof}
\begin{rem}
The affine open subset $X\setminus(S_{h}\cup S_{f})_{X}\cong\mathbb{A}^{3}$
of $X$ is not preserved by the inverse of the flop $\chi$. Indeed,
since $U_{H_{2}}\cap l_{+}\cong\mathbb{A}^{1}$, we have $Y\setminus(S_{f}\cup S_{h}\cup H_{1})_{Y}\cong(\mathbb{A}^{2}\setminus\{0\})\times\mathbb{A}^{1}$,
which is not affine. 
\end{rem}

By construction, for every choice of parameters $\alpha_{3},\alpha_{4},\alpha_{6},\alpha_{7}\in k$
for the initial completion $Z=Z_{\alpha_{3},\alpha_{4},\alpha_{6},\alpha_{7}}$
of $\mathbb{A}^{2}$ in \eqref{eq:quartic-completions}, Theorem \ref{thm:A3}
renders a completion of $\mathbb{A}^{3}$ into the total space of
a del Pezzo fibration $\pi:X\to\mathbb{P}^{1}$ of degree $6$ for
which the closed fibers of the restriction $\bar{\pi}:B_{h}\setminus(B_{h}\cap B_{f})\to\mathbb{P}^{1}\setminus\pi(B_{f})=\mathbb{A}^{1}$
of $\pi$ are isomorphic to those of the cubic curve fibration $\xi:S\to\mathbb{A}^{1}$
in (\ref{eq:cubic-fibration}). The isomorphism classes of the closed
fibers of $\xi:S\to\mathbb{A}^{1}$ can in turn be easily determined
in terms of the parameters $\alpha_{3},\alpha_{4},\alpha_{6},\alpha_{7}\in k$.
For instance, for $(\alpha_{3},\alpha_{4},\alpha_{6},\alpha_{7})=(0,0,0,0)$,
we have
\[
S=\{z_{2}^{3}x+z_{0}^{3}+z_{2}z_{1}^{2}=0\}\subset\mathbb{P}_{[z_{0}:z_{1}:z_{2}]}^{2}\times\mathbb{A}_{(x)}^{1}
\]
and hence, every closed fiber of $\xi=\mathrm{pr}_{\mathbb{A}^{1}}:S\to\mathbb{A}^{1}$
other than that over $0$ is an elliptic curve with $j$-invariant
$0$. On the other hand, for $(\alpha_{3},\alpha_{4},\alpha_{6},\alpha_{7})=(0,0,0,1)$,
we have 
\[
S=\{z_{2}^{3}x+z_{0}^{3}+z_{2}(z_{1}^{2}+z_{0}z_{2})=0\}\subset\mathbb{P}_{[z_{0}:z_{1}:z_{2}]}^{2}\times\mathbb{A}_{(x)}^{1}
\]
and a direct computation yields that the closed fibers $S_{x}$ of
$\mathrm{pr}_{\mathbb{A}^{1}}:S\to\mathbb{A}^{1}$ other than those
over $x=\pm2i/(3\sqrt{3})$, where $i=\sqrt{-1}\in k$, are elliptic
curves with $j$-invariant $j(S_{x})=1728\frac{4}{4+27x^{2}}$. These
two choices show in particular that every isomorphism type of elliptic
curve can be realized as a closed fiber of $\bar{\pi}:B_{h}\setminus(B_{h}\cap B_{f})\to\mathbb{P}^{1}\setminus\pi(B_{f})=\mathbb{A}^{1}$
for a suitable choice of parameters $\alpha_{3},\alpha_{4},\alpha_{6},\alpha_{7}\in k$,
which completes the proof of Theorem B in the case $d=6$. 

\bibliographystyle{amsplain}

\end{document}